\newcommand{\mylabel}[2]{#2\def\@currentlabel{#2}\label{#1}}
\setlist[description]{leftmargin=*}
\definecolor{newblue}{RGB}{0,102,204}
\newtheorem{theorem}{Theorem}[section]
\newtheorem{proposition}[theorem]{Proposition}
\newtheorem{lemma}[theorem]{Lemma}
\newtheorem{corollary}[theorem]{Corollary}
\newtheorem{question}[theorem]{Question}
\newtheorem*{A}{Theorem A}
\newtheorem*{B}{Theorem B}
\newtheorem*{Ci}{Theorem C}
\newtheorem*{D}{Theorem D}
\theoremstyle{definition}
\newtheorem{remark}[theorem]{Remark}
\newtheorem{definition}[theorem]{Definition}
\newcommand{\ZZ}{\mathbb{Z}}
\newcommand{\z}{\mathsf{z}}
\renewcommand{\to}{\longrightarrow}
\title[Finite Heisenberg groups and double Kodaira fibrations]{Surface braid groups, finite Heisenberg covers and double Kodaira fibrations}
\date{}
\author[Andrea Causin]{Andrea Causin}
\address{Dipartimento di Architettura, Design e Urbanistica
\newline\indent
Universit\`{a} degli Studi di Sassari
\newline\indent
Palazzo del Pou Salit, Piazza Duomo 6, 07041 Alghero, Sassari, Italy.}
\email{acausin@uniss.it}
\author[Francesco Polizzi]{Francesco Polizzi}
\address{Dipartimento di Matematica e Informatica
\newline\indent
Universit\`a della Calabria
\newline\indent
Ponte Pietro Bucci 30B, I-87036 Arcavacata di Rende, Cosenza, Italy}
\email{polizzi@mat.unical.it}
\thanks{\emph{2010 Mathematics Subject
Classification.} 14J29, 14J25, 20D15}
\keywords{Surface braid groups, Heisenberg groups, Kodaira fibrations}
\begin{document}



\begin{abstract}
We exhibit new examples of double Kodaira fibrations by using finite Galois covers of a product $\Sigma_b \times \Sigma_b$, where $\Sigma_b$ is a smooth projective curve of genus $b \geq 2$.  Each cover is obtained by providing an explicit group epimorphism from the pure braid group $\mathsf{P}_2(\Sigma_b)$ to some finite Heisenberg group. In this way, we are able to show that every curve of genus $b$ is the base of a double Kodaira fibration; moreover, the number of pairwise non-isomorphic Kodaira fibred surfaces fibering over a fixed curve $\Sigma_b$ is at least $\boldsymbol{\upomega}(b+1)$, where $\boldsymbol{\upomega} \colon \mathbb{N} \to \mathbb{N}$ stands for the arithmetic function counting the number of distinct prime factors of a positive integer. As a particular case of our general construction, we obtain a real $4$-manifold of signature $144$ that can be realized as a real surface bundle over a surface of genus $2$, with fibre genus $325$, in two different ways.
This provides (to our knowledge) the first ``double solution" to a problem from Kirby's problem list in low-dimensional topology.

\end{abstract}

\maketitle

\tableofcontents





\setcounter{section}{-1}

\section{Introduction} \label{sec:intro}

A \emph{Kodaira fibration} is a smooth, connected holomorphic fibration $f_1 \colon S \to B_1$, where $S$ is a compact complex surface and $B_1$ is a compact complex curve, which is not isotrivial (this means that not all its fibres are biholomorphic to each others). Equivalently, by \cite{FiGr65}, a Kodaira fibration is a smooth connected fibration $f_1 \colon S \to B_1$ which is not a locally trivial holomorphic fibre bundle, cf.\cite[Chapter I, (10.1)]{BHPV03}; however, any such a fibration is a locally trivial differentiable fibre bundle in the category of real $C^{\infty}$ manifolds, because of the Ehresmann theorem \cite{Eh51}. The genus $b_1:=g(B_1)$ is called the \emph{base genus} of the fibration, whereas the genus $g:=g(F)$, where $F$ is any fibre, is called the \emph{fibre genus}. If a surface $S$ is the total space of a Kodaira fibration, we will call it a \emph{Kodaira fibred surface}.

By \cite[Theorem 1.1]{Kas68}, every Kodaira fibration $f_1 \colon S \to B_1$ satisfies $b_1 \geq 2$ and $g \geq 3$. In particular, $S$ contains no rational or elliptic curves: in fact, such curves can neither dominate the base (because $b_1 \geq 2$) nor be contained in fibres (since the fibration is smooth). So every Kodaira fibred surface $S$ is minimal and, by the superadditivity of the Kodaira dimension, it is of general type, hence algebraic.

Examples of Kodaira fibrations were originally constructed in \cite{Kod67} in order to show that, unlike the topological Euler characteristic, the signature $\sigma$ of a manifold, meaning the signature of the intersection form in the middle cohomology, is not multiplicative for fibre bundles. Indeed, every Kodaira fibred surface $S$ satisfies $\sigma(S) >0$, see for example the introduction of \cite{LLR17}, whereas $\sigma(B_1)=\sigma(F)=0$, and so $\sigma(S) \neq \sigma(B_1)\sigma(F)$. On the other hand, in \cite{CHS57} it is proved that the signature is multiplicative for fibre bundles in the case where the monodromy action of the fundamental group $\pi_1(B_1)$ on the rational cohomology ring $H^*(F, \, \mathbb{Q})$ is trivial; thus, Kodaira fibrations provide examples of fibre bundles for which this action is non-trivial. In fact, the non-triviality of the monodromy action is ensured by the non-isotriviality of the fibration, see also the variants of Kodaira's construction later presented by Atiyah \cite{At69} and Hirzebruch \cite{Hir69}. In this regard, Kodaira fibrations show the important role that the fundamental group plays at the cross-road between the algebro-geometric properties of a complex surface and the topological properties of the underlying real $4$-manifold.

The technique used by Kodaira, Atiyah and Hirzebruch, namely taking a suitable ramified cover of a product of curves, was subsequently refined in a series of papers by several authors, see  \cite{Zaal95,LeBrun00,BDS01,BD02,CatRol09,Rol10,LLR17}. Moreover, very recently, the study of the monodromy action from a Hodge-theoretical point of view has been undertaken in some particular cases, see \cite{Fl17, Breg18}.

There are also different constructions, whose flavour is more complex-analytic, based on Teichm{\"u}ller theory and Bers fibre spaces \cite{GDH91a} or theta functions and Siegel modular forms \cite{GDH91b}. Furthermore, topological characterizations of Kodaira fibrations in terms of Euler characteristic and fundamental group can be found in \cite{Hil00, CatRol09}.

In the context of real $4$-manifolds, the related problem of constructing non-trivial, real surface bundles over surfaces has been addressed by using techniques from geometric topology such as the Meyer signature formula, the Birman-Hilden relations in the mapping class group and the subtraction of Lefschetz fibrations, see for example \cite{En98, EKKOS02, St02, L17}.

For more details on these (and many other) topics, we refer the reader to the survey paper \cite{Cat17} and to the references contained therein.

It is rather difficult to construct  Kodaira fibrations with small $\sigma(S)$. Since $S$ is a differentiable $4$-manifold which is a real surface bundle, its signature is divisible by $4$, see \cite{Mey73}. If moreover $S$ has a spin structure, i.e., its canonical class is $2$-divisible in $\mathrm{Pic}(S)$, then by Rokhlin's theorem its signature is necessarily a positive multiple of $16$, and examples with $\sigma(S)=16$ are constructed in \cite{LLR17}. It is not known if there exists a  Kodaira fibred surface such that $\sigma(S) \leq 12$.

Another important invariant of Kodaira fibred surfaces is the slope $\nu(S)=c_1^2(S)/c_2(S)$, that can be seen as a quantitative measure of the non-multiplicativity of the signature. In fact, every product surface $F \times B_1$ satisfies $\nu(F \times B_1)=2$; on the other hand, if $S$ is a Kodaira fibred surface, then Arakelov inequality (see \cite{Be82}) implies $\nu(S)>2$, while Liu inequality (see \cite{Liu96}) yields $\nu(S)<3$, so that for such a surface the slope lies in the open interval $(2, \, 3)$. The original examples by Atiyah, Hirzebruch and Kodaira have slope lying in $(2, 2 + 1/3]$, see \cite[p. 221]{BHPV03}, and the first examples with higher slope appeared in \cite{CatRol09}, where it is shown that there are Kodaira surfaces satisfying $\nu(S)=2+2/3$. This is the record for the slope so far, in particular it is a present unknown whether the slope of a Kodaira fibred surface can be arbitrarily close to $3$.

The examples provided in \cite{CatRol09} are actually rather special cases of Kodaira fibred surfaces, called \emph{double Kodaira surfaces}. 

\begin{definition}
A \emph{double Kodaira surface} is a compact complex surface $S$, endowed with a \emph{double Kodaira fibration}, namely a surjective, holomorphic map $f \colon S \to B_1 \times B_2$ yielding, by composition with the natural projections, two Kodaira fibrations $f_i \colon S \to B_i$, $i=1, \,2$.
\end{definition}
Note that a surface $S$ is a double Kodaira surface if and only if it admits two distinct Kodaira fibrations $f_i \colon S \to B_i$, $i=1, \,2$, since in this case we can take as $f$ the product morphism $f_1 \times f_2$.

\medskip
Let us now describe our approach to these topics, which exploits the techniques introduced in \cite{Pol18}, and present our results. If $\Sigma_b$ denotes a smooth projective curve of genus $b$, the aim of the present paper is to construct new examples of double Kodaira fibrations by taking some \emph{Heisenberg covers} of the product $\Sigma_b \times \Sigma_b$, namely some Galois covers, branched over the diagonal $\Delta \subset \Sigma_b \times \Sigma_b$ and whose Galois group is isomorphic to some finite Heisenberg group. More precisely, we take any odd prime number $p$ and we consider the isomorphism of  $\mathbb{Z}_p$-vector spaces
\begin{equation}
V = H_1(\Sigma_b \times \Sigma_b - \Delta, \mathbb{Z}_p) \simeq H_1(\Sigma_b \times \Sigma_b, \, \mathbb{Z}_p) \simeq (\mathbb{Z}_p)^{4b},
\end{equation}
see Subsection \ref{subsec:config-integer}. The space $V$ is endowed with an alternating form $\omega \colon V \times V \to \mathbb{Z}_p$ such that $\ker \omega = V_0 \subset V$; correspondingly, there is a central extension
\begin{equation}
\label{eq-intro:central}
1 \to \mathbb{Z}_p \to \mathsf{Heis}(V, \, \omega) \to V \to 1,
\end{equation}
see Definition \ref{def:Heis} and Remark \ref{rmk: degenerate 2-form}. Fixing any ordered set $\mathscr{P}=\{p_1, \, p_2\}$ of two points in $\Sigma_b$, the fundamental group $\pi_1(\Sigma_b \times \Sigma_b - \Delta, \, \mathscr{P})$ is isomorphic to the group $\mathsf{P}_2(\Sigma_b)$ of pure braids on two strings based at $\mathscr{P}$, see Definition \ref{def:braid}; furthermore, there is a natural surjective group homomorphism 
\begin{equation}
\phi \colon \mathsf{P}_2(\Sigma_b) \longrightarrow  V,
\end{equation}
given by the composition of the reduction mod $p$ map $H_1(\Sigma_b \times \Sigma_b - \Delta, \, \mathbb{Z}) \to V$ with the abelianization map $\mathsf{P}_2(\Sigma_b) \longrightarrow  H_1(\Sigma_b \times \Sigma_b - \Delta, \, \mathbb{Z})$. Let $A_{12}$ be  the generator of $ \mathsf{P}_2(\Sigma_b)$ given by the homotopy class of a loop in $\Sigma_b \times \Sigma_b - \Delta$, based at $\mathscr{P}$ and  that ``winds once around $\Delta$''; then,  in Theorem \ref{thm:interpretation-obstruction}, we provide necessary and sufficient conditions on $\omega$ ensuring that $\phi$ admits a surjective lifting
\begin{equation}
\varphi_{\omega} \colon \mathsf{P}_2(\Sigma_b) \to \mathsf{Heis}(V, \omega)
\end{equation}
such that $\varphi_{\omega}(A_{12})$ is non-trivial. This in turn allows us (by using the Riemann Extension Theorem, see Proposition \ref{prop:Riemann-ext}) to construct a finite Heisenberg cover $\mathbf{f} \colon S_{b, \, p} \to \Sigma_b \times \Sigma_b$, depending on $\omega$, whose description is achieved by considering separately two cases: the one where $\omega$ is non-degenerate (i.e., symplectic) and the one where it is degenerate.

When $\omega$ is symplectic, i.e. $V_0=(0)$, the fibres of the compositions $S_{b, \, p} \stackrel{\mathbf{f}}{\to} \Sigma_b \times \Sigma_b \stackrel{\pi_i}{\to} \Sigma_b$ are not connected, so we need to perform a Stein factorization in order to obtain a double Kodaira fibration $f \colon S_{b, \, p} \to \Sigma_{b'} \times \Sigma_{b'}$, which turns out to be a $\mathbb{Z}_p$-cover which is ``very simple'' in the sense of Definition \ref{def:simple-very-symple-standard}. This is the content of Theorem \ref{thm:double-Kodaira-from-omega}, Proposition \ref{prop:degree-double-Kodaira} and Proposition \ref{prop:invariants-nondegenerate-case}, that for the sake of brevity we present here in the following condensed version.

\begin{A}
For every positive integer $b \geq 2$ and every prime number $p \geq 5$, there exists a double Kodaira fibration  $f \colon S_{b, \, p} \to \Sigma_{b'} \times \Sigma_{b'}$, where
\begin{equation}
b'-1 = p^{2b}(b-1),
\end{equation}
which is a cyclic cover of degree $p$, branched over the disjoint union of $p^{2b}$ graphs of automorphisms. The two Kodaira fibrations $f_i \colon S_{b, \, p} \to \Sigma_{b'}$ have the same fibre genus $g$, which is related to $b$ by the formula
\begin{equation} 
2g-2 = p^{2b+1} \left(2b-2 + \mathfrak{p}\right),
\end{equation}
where $\mathfrak{p}:= 1 - 1/p$. The slope of $S_{b, \, p}$ is given by
\begin{equation*}
\nu(S_{b, \, p}) = 2+ \frac{2 \mathfrak{p} - \mathfrak{p}^2}{2b-2 + \mathfrak{p} },
\end{equation*}
and so we have
\begin{equation}
2  < \nu(S_{b, \, p} ) \leq 2 + \frac{12}{35},
\end{equation}
with equality on the right holding precisely when $(b, \,p) \in \{ \,(2, \,5), \; (2,\, 7) \, \}$. 

\medskip

It follows $\nu(S_{2, \, p}) > 2 + 1/3$ for all $p \geq 5$. More precisely, if $p \geq 7$ the function $\nu(S_{2, \, p})$ is strictly decreasing and
\begin{equation*}
\lim_{p \rightarrow +\infty} \nu(S_{2, \, p}) = 2 + \frac{1}{3}.
\end{equation*}
Finally, for all pairs $(b, \,p)$ we have
\begin{equation}
\sigma(S_{b, \, p}) = \frac{1}{3} p^{4b+1}(2b-2)(2 \mathfrak{p} - \mathfrak{p}^2) \geq \sigma(S_{2, \, 5}) = 2^4 \cdot 5^7
\end{equation}
\end{A}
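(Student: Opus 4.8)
The plan is to derive Theorem~A as the symplectic specialization of the construction sketched in the introduction, splitting the argument into a geometric construction and a numerical verification. First I would fix on $V \simeq (\mathbb{Z}_p)^{4b}$ a non-degenerate (symplectic) alternating form $\omega$ satisfying the hypotheses of Theorem~\ref{thm:interpretation-obstruction}, obtaining a surjective lifting $\varphi_{\omega} \colon \mathsf{P}_2(\Sigma_b) \to \mathsf{Heis}(V,\omega)$ with $\varphi_{\omega}(A_{12})$ a non-trivial central element. Applying the Riemann Extension Theorem (Proposition~\ref{prop:Riemann-ext}) to $\ker \varphi_{\omega}$ produces a Galois cover $\mathbf{f} \colon S_{b,p} \to \Sigma_b \times \Sigma_b$ with group $\mathsf{Heis}(V,\omega)$ of order $p^{4b+1}$, branched precisely over $\Delta$. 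As $\omega$ is symplectic, $V_0 = (0)$ and the composites $\pi_i \circ \mathbf{f}$ have disconnected fibres, so I would pass to their Stein factorizations; by the symmetry of the construction both share the same base curve $\Sigma_{b'}$, and the central subgroup $\langle \varphi_{\omega}(A_{12}) \rangle \simeq \mathbb{Z}_p$ survives as the deck group of the residual cyclic cover $f \colon S_{b,p} \to \Sigma_{b'} \times \Sigma_{b'}$. This $f$ is the desired double Kodaira fibration.

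Next I would pin down the base genus and branch data. The cover $\Sigma_{b'} \to \Sigma_b$ is the \'etale abelian cover of degree $p^{2b}$ classified by the first-factor summand $H_1(\Sigma_b, \mathbb{Z}_p)$ of $V$, hence unramified, and multiplicativity of the Euler characteristic gives $2 - 2b' = p^{2b}(2 - 2b)$, i.e.\ $b' - 1 = p^{2b}(b-1)$. The preimage of $\Delta$ in $\Sigma_{b'} \times \Sigma_{b'}$ breaks up as a disjoint union of $p^{2b}$ graphs $\Gamma_{\gamma}$ of the deck automorphisms $\gamma$ of this \'etale cover, so $f$ is branched over $p^{2b}$ graphs of automorphisms. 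The crux of the whole argument lies here: one must verify that the $f_i$ are honest \emph{Kodaira} fibrations. Smoothness is clear since the $\Gamma_{\gamma}$ are pairwise disjoint, while non-isotriviality follows because these are graphs of non-constant automorphisms, so the branch points move with the fibre and no two fibres can be biholomorphic.

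The invariant computation is then routine. A generic fibre of $\pi_1 \colon \Sigma_{b'} \times \Sigma_{b'} \to \Sigma_{b'}$ is a copy of $\Sigma_{b'}$ meeting each of the $p^{2b}$ graphs in one point, so the fibre $F$ of $f_1$ is a totally ramified $\mathbb{Z}_p$-cover of $\Sigma_{b'}$ branched at $p^{2b}$ points; Riemann--Hurwitz yields
\begin{equation*}
2g - 2 = p(2b' - 2) + (p - 1)p^{2b} = p^{2b+1}(2b - 2 + \mathfrak{p}), \qquad \mathfrak{p} = 1 - \tfrac1p.
\end{equation*}
Since a Kodaira fibration is a differentiable fibre bundle (Ehresmann), the Euler number is multiplicative, giving $c_2(S_{b,p}) = (2g - 2)(2b' - 2) = p^{4b+1}(2b-2)(2b - 2 + \mathfrak{p})$. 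For the signature I would invoke Hirzebruch's formula for smooth cyclic covers with $\sigma(\Sigma_{b'} \times \Sigma_{b'}) = 0$ and branch self-intersection $D^2 = p^{2b}(2 - 2b') = -p^{4b}(2b-2)$; using $2\mathfrak{p} - \mathfrak{p}^2 = (p^2 - 1)/p^2$, this gives
\begin{equation*}
\sigma(S_{b,p}) = \frac{p^2 - 1}{3p}\,p^{4b}(2b - 2) = \frac{1}{3}\,p^{4b+1}(2b-2)(2\mathfrak{p} - \mathfrak{p}^2),
\end{equation*}
and the slope follows from $\nu = c_1^2/c_2 = 2 + 3\sigma/c_2$.

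It remains to establish the estimates by elementary analysis. Positivity of $\mathfrak{p}(2 - \mathfrak{p})$ on $(0,1)$ gives $\nu > 2$ immediately. The fractional term decreases in $b$ through its denominator, so its maximum over $b \geq 2$ occurs at $b = 2$, where I would study $h(\mathfrak{p}) = \mathfrak{p}(2 - \mathfrak{p})/(2 + \mathfrak{p})$; its derivative vanishes at $\mathfrak{p} = 2\sqrt{2} - 2 \approx 0.828$, strictly between $\mathfrak{p} = 4/5$ (at $p = 5$) and $\mathfrak{p} = 6/7$ (at $p = 7$), and the direct evaluation $h(4/5) = h(6/7) = 12/35$ shows that the maximum $2 + 12/35$ is attained exactly at $(b,p) \in \{(2,5),(2,7)\}$. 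For $p \geq 7$ one has $\mathfrak{p} \geq 6/7 > 2\sqrt{2} - 2$, so $h$ is strictly decreasing with $\lim_{p \to \infty} h(\mathfrak{p}) = h(1) = 1/3$, whence $\nu(S_{2,p})$ strictly decreases to $2 + 1/3$ and exceeds $2 + 1/3$ for all $p \geq 5$. Finally, each of $p^{4b+1}$, $2b - 2$ and $\mathfrak{p}(2 - \mathfrak{p})$ is non-decreasing in $b \geq 2$ and in $p \geq 5$, so $\sigma(S_{b,p})$ is minimized at $(2,5)$, where it equals $\tfrac13 \cdot 5^9 \cdot 2 \cdot \tfrac{24}{25} = 2^4 \cdot 5^7$.
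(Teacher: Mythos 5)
Your construction is the paper's construction: a symplectic form of Heisenberg type yields the epimorphism $\varphi_{\omega}$, Proposition \ref{prop:Riemann-ext} yields the Heisenberg cover $\mathbf{f}$, Stein factorization yields $f \colon S_{b,\,p} \to \Sigma_{b'} \times \Sigma_{b'}$, and the branch locus is identified with the $p^{2b}$ graphs of deck transformations of the \'etale cover $\theta \colon \Sigma_{b'} \to \Sigma_b$, exactly as in Theorem \ref{thm:double-Kodaira-from-omega} and Proposition \ref{prop:degree-double-Kodaira} (the paper proves that the two Stein bases agree via normality of $\mathrm{im}(\varphi_i)$ and an order-two automorphism exchanging them; your remark that both are the canonical mod-$p$ homology cover of $\Sigma_b$ is an equivalent shortcut, as is your identification of the deck group of $f$ with the central $\mathbb{Z}_p$ instead of the paper's kernel computation). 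Where you genuinely diverge is the computation of the invariants. The paper (Proposition \ref{prop:invariant-S-G}) computes $c_1^2$ via the Hurwitz formula $K_S = \mathbf{f}^*K_{\Sigma_b \times \Sigma_b} + (p-1)D$ for the degree-$p^{4b+1}$ cover of $\Sigma_b \times \Sigma_b$ and then extracts $\sigma$ and $\nu$; it also gets the fibre genus from Riemann--Hurwitz applied to a connected cover of $\Sigma_b$ of degree $p^{2b+1}$ branched at one point. You instead compute $g$ from the degree-$p$ cyclic cover of $\Sigma_{b'}$ branched at $p^{2b}$ points, and obtain $\sigma$ directly from Hirzebruch's signature formula for the cyclic cover $f$, with $D^2 = p^{2b}(2-2b') = -p^{4b}(2b-2)$, recovering the slope from $\nu = 2 + 3\sigma/c_2$. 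Both routes give identical numbers, e.g.\ $\sigma(S_{b,\,p}) = \tfrac{1}{3}(2b-2)p^{4b-1}(p^2-1)$. Your route is shorter once the cyclic structure of Proposition \ref{prop:degree-double-Kodaira} is in hand, but it imports the branched-cover signature formula as an external ingredient; the paper's route stays within Hurwitz-formula intersection theory and proves the invariant formulas for an arbitrary finite quotient $G$ of $\mathsf{P}_2(\Sigma_b)$ (Theorem \ref{thm:double-Kodaira-from-G}), not only in the cyclic case. The closing analysis of $\nu$ and $\sigma$ is the paper's argument up to the substitution $\mathfrak{p} = 1 - 1/p$: your critical point $\mathfrak{p} = 2\sqrt{2}-2$ is the paper's $x_0 = 3 + 2\sqrt{2}$.

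Two points need tightening. First, the existence of a symplectic form of Heisenberg type is precisely where the hypothesis $p \geq 5$ enters (Proposition \ref{prop:heis-forma-exist}; for $p = 3$ the forms \eqref{eq:symplectic-Kodaira} are never symplectic, Remark \ref{rmk:3-does-not-work}). You fix such a form without justifying its existence, so as a self-contained proof of an existence theorem this is a gap; it is repaired simply by invoking that proposition. Second, your claim that ``no two fibres can be biholomorphic'' is stronger than what is true: covers branched at different points can be abstractly isomorphic. Non-isotriviality only requires that not all fibres be isomorphic, and the honest justification is that the branch point moves with the fibre --- which is also all the paper says in the proof of Theorem \ref{thm:double-Kodaira-from-G}. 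Neither issue affects the overall architecture or the numerical conclusions.
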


Regarding the case where $\omega$ is degenerate, we only analyze the case where the matrix representing $\omega$ has the form
\begin{equation} \label{eq:intro-omega-degenerate}
\begin{pmatrix}
J_b & J_b \\
J_b & J_b
\end{pmatrix} \in \mathrm{Mat}_{4b}(\mathbb{Z}_p),
\end{equation}
where $J_b$ is the standard $2b \times 2b$ symplectic matrix. Then $V_0= \ker \omega$ has  dimension $2b$ and, if $p$ divides $b+1$, there is a group epimorphism
\begin{equation}
\varphi_{\omega} \colon \mathsf{P}_2(\Sigma_b) \to \mathsf{Heis}(W, \omega),
\end{equation}
where $W=V/V_0$. The corresponding cover, that we call a \emph{degenerate Heisenberg cover}, directly yields a double Kodaira fibration $f \colon S^{\circ}_{b, \, p} \to \Sigma_b \times \Sigma_b$, that is, in this situation no Stein factorization is needed (the ``$\circ$'' superscript is used here in order to emphasize this fact). An analogous result holds when $p=2$, taking the Heisenberg matrix group $\mathsf{H}_{2b+1}(\mathbb{Z}_2)$ described in \eqref{eq:heis-matrices} as a substitute of $\mathsf{Heis}(W, \, \omega)$, whose definition makes no sense in characteristic $2$. Since for $p \geq 3$ the group $\mathsf{H}_{2b+1}(\mathbb{Z}_p)$ is isomorphic to $\mathsf{Heis}(W, \, \omega)$, we can summarize our results by treating both the cases $p$ odd and $p=2$ at once, see Theorem \ref{thm:directly-Kodaira} and  Proposition \ref{prop:invariants-degenerate-case}.

\begin{B}
Let $\Sigma_b$ be any smooth curve of genus $b \geq 2$. Then, for all primes $p$ dividing $b+1$, there exists a double Kodaira fibration $f \colon S^{\circ}_{b, \, p} \to \Sigma_b \times \Sigma_b$. The morphism $f$ is a finite Galois cover of degree $p^{2b+1}$, branched on the diagonal $\Delta \subset \Sigma_b \times \Sigma_b$ and having Galois group isomorphic to the matrix Heisenberg group $\mathsf{H}_{2b+1}(\mathbb{Z}_p)$. Both Kodaira fibrations  $S^{\circ}_{b, \, p} \to \Sigma_b$ have the same fibre genus $g$, satisfying the relation
\begin{equation}
2g-2 = p^{2b+1}(2b-2+ \mathfrak{p}),
\end{equation}
where $\mathfrak{p}:=1-1/p$. Finally, the invariants of $S^{\circ}_{b, \, p}$ are
\begin{equation}
\begin{split}
c_1^2(S^{\circ}_{b, \, p}) & =  p^{2b+1}(2b-2) (4b-4 + 4 \mathfrak{p} - \mathfrak{p}^2) \\
c_2(S^{\circ}_{b, \, p}) & = p^{2b+1}(2b-2)(2b-2 + \mathfrak{p}),
\end{split}
\end{equation}
so that the slope and the signature of $S^{\circ}_{b, \, p}$ can be expressed as
\begin{equation}
\begin{split}
\nu(S^{\circ}_{b, \, p}) & = \frac{c_1^2(S^{\circ}_{b, \, p})}{c_2(S^{\circ}_{b, \, p})} = 2+ \frac{2 \mathfrak{p} - \mathfrak{p}^2}{2b-2 + \mathfrak{p} } \\
\sigma(S^{\circ}_{b, \, p}) & = \frac{1}{3}\left(c_1^2(S^{\circ}_{b, \, p}) - 2 c_2(S^{\circ}_{b, \, p}) \right) = \frac{1}{3} p^{2b+1}(2b-2)(2 \mathfrak{p} - \mathfrak{p}^2).
\end{split}
\end{equation}
In particular, we have
\begin{equation}
\nu(S^{\circ}_{b, \, p}) \leq \nu(S^{\circ}_{2, \, 3})=2 + \frac{1}{3}, \quad \sigma(S^{\circ}_{b, \, p}) \geq \sigma(S^{\circ}_{3, \, 2}) = 128.
\end{equation}
\end{B}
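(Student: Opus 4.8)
\emph{Strategy.} The plan is to realize $S^{\circ}_{b,\,p}$ as the branched Galois cover of $\Sigma_b\times\Sigma_b$ attached to an explicit Heisenberg representation of the pure braid group, and then to read off every invariant from the geometry of the cover via the ramification formula and Riemann--Hurwitz. Throughout I identify $V=(\mathbb{Z}_p)^{4b}$, the kernel $V_0=\{(x,-x)\}$ of the form in \eqref{eq:intro-omega-degenerate}, and the quotient $W=V/V_0\simeq(\mathbb{Z}_p)^{2b}$ on which $\omega$ descends to the standard symplectic form, so that $\mathsf{Heis}(W,\omega)\simeq\mathsf{H}_{2b+1}(\mathbb{Z}_p)$ has order $p^{2b+1}$.

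\emph{Construction of $\varphi_{\omega}$ (the main obstacle).} First I would produce the epimorphism $\varphi_{\omega}\colon\mathsf{P}_2(\Sigma_b)\to\mathsf{Heis}(W,\omega)$ lifting the composition $\mathsf{P}_2(\Sigma_b)\xrightarrow{\phi}V\to W$, using the criterion of Theorem \ref{thm:interpretation-obstruction}. The decisive point is that $A_{12}$ is trivial in $H_1(\Sigma_b\times\Sigma_b-\Delta,\mathbb{Z})$ — its meridian class dies in the Gysin sequence because $\Delta\cdot(\Sigma_b\times\{\mathrm{pt}\})=1$ — so $A_{12}$ lies in the commutator subgroup and any lift sends it into the centre $\mathbb{Z}_p$, with a value $k$ completely determined by $\omega$. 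Evaluating $\omega$ on the meridian relation $A_{12}=\prod_{r=1}^{b}[a_{2r},b_{2r}]$ (the relator of the once-punctured fibre) yields $k\equiv b$, while evaluating it on the relation $\prod_{r=1}^{b}[\delta a_r,\delta b_r]=A_{12}^{\,2-2b}$ coming from the unit normal circle bundle of $\Delta$ (whose Euler number is $\Delta^2=2-2b$) forces $4b\equiv(2-2b)\,k$. With the orientation conventions of the chosen presentation, compatibility of the two relations gives $b(b+1)\equiv 0\pmod p$; since $\varphi_{\omega}(A_{12})\neq0$ excludes $p\mid b$, this is exactly the hypothesis $p\mid b+1$, and then $k\equiv-1$ is nonzero. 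Checking that the remaining braid relations can be simultaneously satisfied, and that the images of the $a_{2r},b_{2r}$ together with $A_{12}$ generate $\mathsf{H}_{2b+1}(\mathbb{Z}_p)$, completes this step; I expect it to be the hardest part, as it requires the full presentation of $\mathsf{P}_2(\Sigma_b)$ and the obstruction analysis of Theorem \ref{thm:interpretation-obstruction}. For $p=2$ one replaces $\mathsf{Heis}(W,\omega)$ by the matrix group $\mathsf{H}_{2b+1}(\mathbb{Z}_2)$ throughout, exactly as indicated before the statement, and the rest of the argument is unchanged.

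\emph{From $\varphi_{\omega}$ to a double Kodaira fibration.} Feeding $\varphi_{\omega}$ into the Riemann Extension Theorem (Proposition \ref{prop:Riemann-ext}) yields a connected Galois cover $\mathbf{f}\colon S^{\circ}_{b,\,p}\to\Sigma_b\times\Sigma_b$ with group $\mathsf{H}_{2b+1}(\mathbb{Z}_p)$, branched on $\Delta$ with branching index $p$ (the order of $\varphi_{\omega}(A_{12})$), hence of degree $p^{2b+1}$. I would then verify that both compositions $f_i=\pi_i\circ\mathbf{f}$ are Kodaira fibrations with \emph{connected} fibres, so that no Stein factorization is needed. Connectedness of a fibre of $\pi_1$, say $\{q\}\times\Sigma_b$ (which meets $\Delta$ only at $(q,q)$), amounts to surjectivity of $\varphi_{\omega}$ restricted to $\pi_1(\{q\}\times\Sigma_b-\{(q,q)\})=\langle a_{2r},b_{2r},A_{12}\rangle$; this holds precisely because the $a_{2r},b_{2r}$ already map onto a symplectic basis of $W$ and $A_{12}$ onto the centre. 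Since $\Delta$ is transverse to the fibres and $\pi_1|_{\Delta}$ is an isomorphism, the local model $(\tilde v,\tilde w)\mapsto(\tilde v,\tilde w^{\,p})$ shows that $f_i$ is a submersion even along the ramification, so all fibres are smooth; non-isotriviality follows a posteriori from $\nu(S^{\circ}_{b,\,p})>2$.

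\emph{Invariants and extremal values.} The fibre genus comes from Riemann--Hurwitz for the degree $p^{2b+1}$ cover $F\to\Sigma_b$ branched only at one point, with $p^{2b}$ ramification points each of index $p$: this gives $2g-2=p^{2b+1}(2b-2)+p^{2b}(p-1)=p^{2b+1}(2b-2+\mathfrak{p})$. As a Kodaira fibration is a smooth fibre bundle, $c_2=(2g-2)(2b-2)=p^{2b+1}(2b-2)(2b-2+\mathfrak{p})$. For $c_1^2$ I would use the ramification formula $K_{S^{\circ}_{b,\,p}}=\mathbf{f}^{*}(K_{\Sigma_b\times\Sigma_b}+\mathfrak{p}\,\Delta)$ together with $K_{\Sigma_b\times\Sigma_b}^2=2(2b-2)^2$, $K_{\Sigma_b\times\Sigma_b}\cdot\Delta=2(2b-2)$ and $\Delta^2=2-2b$, which give $(K_{\Sigma_b\times\Sigma_b}+\mathfrak{p}\,\Delta)^2=(2b-2)(4b-4+4\mathfrak{p}-\mathfrak{p}^2)$ and hence $c_1^2=p^{2b+1}(2b-2)(4b-4+4\mathfrak{p}-\mathfrak{p}^2)$; the stated $\nu$ and $\sigma$ then follow from $\nu=c_1^2/c_2$ and $\sigma=\tfrac13(c_1^2-2c_2)$. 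Finally, writing $\nu=2+\mathfrak{p}(2-\mathfrak{p})/(2b-2+\mathfrak{p})$ and using $\mathfrak{p}(2-\mathfrak{p})<1$, one checks that over the admissible pairs (primes $p\mid b+1$ with $b\geq2$) the quantity $\nu$ is maximal and $\sigma$ minimal only among the two smallest cases $(b,p)=(2,3)$ and $(b,p)=(3,2)$; a direct substitution gives $\nu(S^{\circ}_{2,3})=2+\tfrac13$ and $\sigma(S^{\circ}_{3,2})=128$, which yields the asserted bounds.
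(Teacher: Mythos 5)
Your proposal follows the paper's own route in all but one step: an epimorphism $\mathsf{P}_2(\Sigma_b)\to\mathsf{H}_{2b+1}(\mathbb{Z}_p)$ sending $A_{12}$ to a central element of order $p$, Proposition \ref{prop:Riemann-ext} to produce the Galois cover branched over $\Delta$, connectedness of the fibres of $\pi_i\circ\mathbf{f}$ from surjectivity of the two restricted homomorphisms (so that $m_1=m_2=1$ in Theorem \ref{thm:double-Kodaira-from-G} and no Stein factorization is needed), and the invariant computations of Proposition \ref{prop:invariant-S-G}: Riemann--Hurwitz for $g$, multiplicativity of $\chi_{\mathrm{top}}$ for $c_2$, and the ramification formula $K_{S^{\circ}_{b,p}}=\mathbf{f}^{*}(K_{\Sigma_b\times\Sigma_b}+\mathfrak{p}\,\Delta)$ for $c_1^2$; all of your intersection numbers are correct. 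Your derivation of the condition $p\mid b+1$, by confronting the surface relation with the relation coming from the unit normal circle bundle of $\Delta$, is a pleasant homological variant of the paper's argument, where the same condition appears as the normalization \eqref{eq:lambda-mu} applied to $\lambda_j=\mu_j=-1$, equivalently as the condition that $\xi(\omega)$, computed in \eqref{eq:xi(omega)}, be a nonzero multiple of $\delta$.

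The genuine gap is that the \emph{existence} of the epimorphism is never established: your two relations give only necessary conditions on the central value $k=\varphi_{\omega}(A_{12})$, and the sentence ``checking that the remaining braid relations can be simultaneously satisfied \dots\ completes this step'' defers exactly the point where the theorem's content lies. The paper closes this step by writing the explicit assignment \eqref{eq:varphi-final}, namely $\varphi(\rho_{1j})=\varphi(\rho_{2j})=\mathsf{r}_j$, $\varphi(\tau_{1j})=\varphi(\tau_{2j})=\mathsf{t}_j$, $\varphi(A_{12})=\mathsf{z}$, and verifying it against the presentation of Theorem \ref{thm:presentation-braid}: every conjugation relation reduces to an identity in $\mathsf{H}_{2b+1}(\mathbb{Z}_p)$ because $\mathsf{z}$ is central and the images of the first- and second-string generators coincide, while the two surface relations reduce, as in Remark \ref{rmk:lifting}, to $\sum_j\lambda_j\equiv\sum_j\mu_j\equiv 1 \pmod p$, which is precisely where $p\mid b+1$ enters; this check is finite and mechanical, not an open-ended obstacle. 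Alternatively, for odd $p$ you could invoke Theorem \ref{thm:interpretation-obstruction} in its degenerate form (Remark \ref{rmk: degenerate 2-form}), but then you must actually carry out the computation $\eta(\omega)=0$, $\xi(\omega)=\delta$. Note also that this cohomological route is unavailable for $p=2$: the group law \eqref{eq:group-law-Heis} and the identification of the extension class with $\omega$ (Remark \ref{rmk:extension-class-is-omega}) require the coefficient $1/2$, and $\mathsf{H}_{2b+1}(\mathbb{Z}_2)$ has exponent $4$ (Remark \ref{rmk:heis-2-group}), so your claim that for $p=2$ ``the rest of the argument is unchanged'' holds only along the direct-verification route, which is how the paper proceeds via the presentation \eqref{eq:rel-heis-g-degenerate-2}. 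Finally, your sketch of the extremal values is completable and in fact simpler than the paper's proof of Proposition \ref{prop:invariants-degenerate-case}: for $b\geq 3$ the slope increment is less than $1/(2b-2)\leq 1/4<1/3$, leaving only $(b,p)=(2,3)$, while monotonicity of $\sigma=\frac{1}{3}(2b-2)p^{2b-1}(p^2-1)$ in both $b$ and $p$ reduces the minimum to comparing $(2,3)$, which gives $144$, with $(3,2)$, which gives $128$.
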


We emphasize that the degenerate construction allows us to obtain a much smaller minimal signature than the construction with the symplectic form, namely $128$ instead of $2^4 \cdot 5^7$. This is due to the fact that in the degenerate situation we can decrease the lowest admissible order of the Heisenberg group from $5^9$ to $2^7$.
The price to pay is that now not all the pairs $(b, \, p)$ are suitable for the construction, but only those with $p$ dividing $b+1$, and this prevents us from obtaining slope higher that $2+ 1/3$ in this way.

We also remark that we can obtain only finitely many values of the fibre genus $g$ for a fixed $b$, because there are only finitely many primes $p$ dividing $b+1$. In fact, all these primes give pairwise distinct covers, as stated by our next result, see Corollary \ref{cor:number-kodaira-any-curve}.

\begin{Ci} \label{thm:ci}
Let $\Sigma_b$ be any smooth curve of genus $b$. Then there exist at least one and at most finitely many  double Kodaira surfaces arising as degenerate Heisenberg covers of the form $f \colon S^{\circ}_{b, \, p} \to \Sigma_b \times \Sigma_b$. Furthermore, such surfaces are pairwise non-homeomorphic and, denoting their number by $\kappa(b)$, we have
\begin{equation*}
\kappa(b) = \boldsymbol{\upomega}(b+1),
\end{equation*}
where $\boldsymbol{\upomega} \colon \mathbb{N} \to \mathbb{N}$ stands for the arithmetic function counting the number of distinct prime factors of a positive integer. In particular, we obtain
\begin{equation*}
\limsup_{b \rightarrow + \infty} \kappa(b) = + \infty.
\end{equation*}
 \end{Ci}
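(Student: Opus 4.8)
The plan is to deduce the statement from Theorem~B, reducing the count of homeomorphism classes to the count of prime divisors of $b+1$. By the obstruction analysis (Theorem~\ref{thm:interpretation-obstruction}) applied to the degenerate form in \eqref{eq:intro-omega-degenerate}, the surjective lifting $\varphi_{\omega}$ with $\varphi_{\omega}(A_{12})$ nontrivial exists \emph{if and only if} $p$ divides $b+1$; hence, by Theorem~B (see Theorem~\ref{thm:directly-Kodaira}), the degenerate Heisenberg construction produces a double Kodaira surface $f\colon S^{\circ}_{b,\,p}\to\Sigma_b\times\Sigma_b$ exactly for those primes $p$ dividing $b+1$. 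Thus the family of such covers is naturally indexed by the set of prime divisors of $b+1$, whose cardinality is by definition $\boldsymbol{\upomega}(b+1)$. Since $b\geq 2$ forces $b+1\geq 3$, this set is nonempty, and since $b+1$ is a fixed integer it is finite; this already yields the ``at least one and at most finitely many'' assertion.

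Next I would show that distinct admissible primes give non-homeomorphic surfaces, for which it suffices to produce one homeomorphism invariant that is strictly monotone in $p$. Using the signature formula of Theorem~B (Proposition~\ref{prop:invariants-degenerate-case}) together with the identity $2\mathfrak{p}-\mathfrak{p}^2 = 1-1/p^2$, one rewrites
\[
\sigma(S^{\circ}_{b,\,p}) = \tfrac{1}{3}(2b-2)\,p^{2b-1}\,(p^2-1).
\]
For fixed $b\geq 2$ the right-hand side is strictly increasing in $p$, so $p\mapsto\sigma(S^{\circ}_{b,\,p})$ is injective on the primes dividing $b+1$. As the signature of the underlying real $4$-manifold is a homeomorphism invariant, the surfaces $S^{\circ}_{b,\,p}$ are pairwise non-homeomorphic; the same conclusion follows from the Euler characteristic $c_2(S^{\circ}_{b,\,p})$, which is likewise strictly increasing in $p$.

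Combining the two steps, the homeomorphism classes of degenerate Heisenberg covers of $\Sigma_b\times\Sigma_b$ are in bijection with the prime divisors of $b+1$, giving $\kappa(b)=\boldsymbol{\upomega}(b+1)$. Finally, to obtain $\limsup_{b\to+\infty}\kappa(b)=+\infty$ I would specialise to primorials: taking $b+1=2\cdot 3\cdot 5\cdots p_n$ with $n\geq 2$, the product of the first $n$ primes, yields $b\geq 2$ and $\kappa(b)=\boldsymbol{\upomega}(b+1)=n$, which is unbounded as $n\to\infty$. The only non-formal point is the pairwise non-homeomorphism, and even this reduces to the elementary monotonicity of $p\mapsto p^{2b-1}(p^2-1)$; the main conceptual obstacle is rather ensuring that the indexing by primes dividing $b+1$ is both exhaustive and collision-free, which is guaranteed respectively by the ``only if'' direction of Theorem~\ref{thm:interpretation-obstruction} and by the monotone invariant.
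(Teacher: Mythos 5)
Your proposal is correct and takes essentially the same route as the paper's proof of Corollary \ref{cor:number-kodaira-any-curve}: existence of one cover for each prime $p$ dividing $b+1$ comes from Theorem \ref{thm:directly-Kodaira}, and pairwise non-homeomorphism comes from rewriting the signature as $\sigma(S^{\circ}_{b,\,p})=\tfrac{1}{3}(2b-2)\,p^{2b-1}(p^2-1)$ and noting it is strictly increasing in $p$ for fixed $b$. The only minor caveat is that your ``only if'' appeal to Theorem \ref{thm:interpretation-obstruction} does not literally apply at $p=2$, where the cohomological lifting machinery (and $\mathsf{Heis}(V,\omega)$ itself) is undefined; the paper avoids this because the degenerate construction is by definition indexed by the primes dividing $b+1$, and your explicit primorial argument for the $\limsup$ is a correct elaboration of the unboundedness of $\boldsymbol{\upomega}$.
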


We believe that the results described above are significant for at least three reasons:
\begin{itemize}
\item[$\boldsymbol{(i)}$] the idea of using finite quotients of $\mathsf{P}_2(\Sigma_b)$ in order to obtain double Kodaira fibrations seems to be a new one. Furthermore, both the non-degenerate and the degenerate construction can be applied uniformly for all values of the base genus $b$;
\item[$\boldsymbol{(ii)}$] the non-degenerate construction allows us to obtain in one shot infinitely many double Kodaira fibrations with slope higher than $2 + 1/3$ (in fact, the maximum slope obtained with our method is $2+12/35$), maintaining at the same time a complete control on both the base genus and on the signature (cf. Theorem \ref{thm:double-Kodaira-from-G} for a result in greater generality). By contrast, the (beautiful) ``tautological construction" used in \cite{CatRol09} yields the higher slope $2+ 2/3$, but it involves an {\'e}tale pullback ``of sufficiently large degree'', that completely loses control on the other quantities. On the other hand, the revisitation of the tautological construction presented in \cite{LLR17} makes the pull-back explicit, but it requires a careful case-by-case analysis of the monodromy action in order to make sure that some ``virtual'' topological data actually give rise to an effective double Kodaira fibration;  
\item[$\boldsymbol{(iii)}$] the degenerate construction is a very minimal one, because it provides double Kodaira surfaces  directly as (non-abelian) Galois covers of $\Sigma_b \times \Sigma_b$ branched precisely over the diagonal $\Delta$. As far as we know, this is the first construction showing that \emph{all} curves $\Sigma_b$ of arbitrary genus $b \geq 2$ (and not only some curves with non-trivial automorphisms) are the base of at least one double Kodaira fibration $S \to \Sigma_b \times \Sigma_b$; in addition, the number of topological types of $S$, for a fixed $\Sigma_b$, can be arbitrarily large  (cf. Theorem C). 
\end{itemize}

A particularly interesting instance of $\boldsymbol{(iii)}$ is the case $b=2$, $p=3$. It provides (to our knowledge) the first ``double solution'' to a problem, posed by G. Mess, from Kirby's problem list in low-dimensional topology (\cite[Problem 2.18 A]{Kir97}, see also Remark \ref{rmk:BD02}), asking what is the smallest number $b$ for which there exists a real surface bundle over a surface with base genus $b$ and non-zero signature. This is the content of our next result, see Proposition \ref{prop:Kirby}.

\begin{D}
There exists an oriented $4$-manifold $X$ $($namely, the real $4$-manifold underlying the complex surface $S^{\circ}_{2, \, 3})$ of signature $144$ that can be realized as a real surface bundle over a surface of genus $2$, with fibre genus $325$, in two different ways.
\end{D}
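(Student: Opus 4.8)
The plan is to specialize Theorem B to the single pair $(b, \, p) = (2, \, 3)$ and then transport the resulting holomorphic data into the smooth category. First I would check admissibility: since $b+1 = 3$ is divisible by $p = 3$, Theorem B applies and produces a double Kodaira fibration $f \colon S^{\circ}_{2, \, 3} \to \Sigma_2 \times \Sigma_2$; let $X$ denote the underlying oriented real $4$-manifold of the complex surface $S^{\circ}_{2, \, 3}$. Substituting $b = 2$, $p = 3$, and $\mathfrak{p} = 1 - 1/p = 2/3$ into the formulas of Theorem B gives the fibre-genus relation $2g - 2 = 3^5\,(2 + 2/3) = 648$, hence $g = 325$, and the signature $\sigma(X) = \tfrac{1}{3}\cdot 3^5 \cdot 2 \cdot (2\mathfrak{p} - \mathfrak{p}^2) = \tfrac{1}{3}\cdot 243 \cdot 2 \cdot \tfrac{8}{9} = 144$. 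These are routine substitutions, so the numerical assertions of the statement are immediate once Theorem B is invoked; note in passing that $144 = 4 \cdot 36$ is consistent with the divisibility by $4$ required of a surface bundle signature.

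Next I would exploit the double fibration structure. By the definition of double Kodaira surface, composing $f$ with the two natural projections $\pi_i \colon \Sigma_2 \times \Sigma_2 \to \Sigma_2$ yields two Kodaira fibrations $f_i = \pi_i \circ f \colon S^{\circ}_{2, \, 3} \to \Sigma_2$, $i = 1, \, 2$. As recalled in the Introduction, a Kodaira fibration is never a locally trivial \emph{holomorphic} fibre bundle, yet it is a locally trivial \emph{differentiable} fibre bundle by Ehresmann's theorem \cite{Eh51}. Consequently each $f_i$ realizes $X$ as a real surface bundle over a closed oriented surface of genus $2$, and Theorem B guarantees that both fibrations share the same fibre genus $g = 325$. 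Since the base genus equals $2$ and $\sigma(X) = 144 \neq 0$, either one of the $f_i$ already answers the question of Mess recorded in \cite[Problem 2.18 A]{Kir97}.

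The only point requiring care — and the closest thing to an obstacle — is that $f_1$ and $f_2$ furnish two \emph{genuinely different} realizations rather than a single one counted twice. This is exactly what the notion of double Kodaira surface provides: the two Kodaira fibrations are distinct holomorphic maps, their fibres being the two distinct families of curves $f^{-1}(\{x\} \times \Sigma_2)$ and $f^{-1}(\Sigma_2 \times \{y\})$ lying over the two rulings of $\Sigma_2 \times \Sigma_2$. Hence $X$ fibres over a surface of genus $2$ with fibre genus $325$ in two essentially different ways, and putting together the signature computation of the first paragraph with the smooth bundle structures of the second completes the proof.
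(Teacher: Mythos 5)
Your proposal is correct and follows essentially the same route as the paper: the paper obtains Proposition \ref{prop:Kirby} precisely by specializing Theorem \ref{thm:directly-Kodaira} to the unique admissible pair $(b,\,p)=(2,\,3)$, reading off $g=325$ and $\sigma=144$ from the formulas there, and observing that the two compositions of $f$ with the projections $\Sigma_2\times\Sigma_2\to\Sigma_2$ give two distinct smooth (Ehresmann) surface-bundle structures on the underlying $4$-manifold. Your substitutions and the justification that the two fibrations are genuinely different (their fibres lie over the two rulings of $\Sigma_2\times\Sigma_2$) match the paper's implicit argument exactly.
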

In fact, we may ask whether $144$ and $325$ are the minimum possible values for the signature and the fibre genus of a double Kodaira surface $S \to \Sigma_2 \times \Sigma_2$, see Question \ref{q:minimal-values}.

This work is organized as follows. In Section \ref{sec:configuration} we set up notation and terminology and we collect the background material which is needed in the sequel of the paper. In particular, in Corollary \ref{cor-cohomology-K-conf} we compute the (co)homology groups of the configuration space $\Sigma_b \times \Sigma_b-\Delta$ with coefficient in an arbitrary field $\mathbb{K}$ and in Proposition \ref{prop:cup-product} we show the surjectivity of the corresponding cup-product pairing (all of this is certainly known to the experts, but difficult to find in a self-contained form), whereas in Theorem \ref{thm:presentation-braid} we discuss Gon\c{c}alves-Guaschi's presentation of the braid group $\mathsf{P}_2(\Sigma_b)$, see \cite{GG04}. The main topic of Section \ref{sec:rep-braid} is our lifting result Theorem \ref{thm:interpretation-obstruction}, whose proof is based on group cohomological arguments and that seems to be of independent interest. Finally, all these pieces of information are applied in Section \ref{sec:Kodaira} in order to construct our Heisenberg covers of $\Sigma_b \times \Sigma_b$ and the corresponding double Kodaira fibrations.


\section{Topology of configuration spaces and surface braid groups} \label{sec:configuration}

\subsection{Cohomology of $\Sigma_b \times \Sigma_b - \Delta$: rational coefficients} \label{subsec:config-rational}

Let $\Sigma_b$ be a smooth, projective curve of genus $b \geq 2$ and choose a basis
\begin{equation} \label{eq:symplectic}
\alpha_1, \ldots, \alpha_b, \, \beta_1, \ldots, \beta_b
\end{equation}
for the first cohomology group $H^1(\Sigma_b, \, \mathbb{Z})$, symplectic with respect to the cup product
\begin{equation}
H^1(\Sigma_b, \, \mathbb{Z}) \times H^1(\Sigma_b, \, \mathbb{Z}) \to H^2(\Sigma_b, \, \mathbb{Z}).
\end{equation}
Denoting by $1$ and $\gamma$ the generators of
 $H^0(\Sigma_b, \, \mathbb{Z})$ and $H^2(\Sigma_b, \, \mathbb{Z})$, by using the K{\"u}nneth Theorem and the Universal Coefficient Theorem for cohomology (\cite[Theorem 5.5.8]{We14}) we see that the cohomology groups $H^i(\Sigma_b \times \Sigma_b, \, \mathbb{K})$ over an arbitrary field $\mathbb{K}$ are given by
\begin{equation} \label{eq:cohomology-product}
\begin{split}
H^0(\Sigma_b \times \Sigma_b, \, \mathbb{K})& = \langle 1 \otimes 1 \rangle  \simeq \mathbb{K}\\
H^1(\Sigma_b \times \Sigma_b, \, \mathbb{K})& = \langle 1\otimes \alpha_i, \, 1\otimes \beta_i, \, \alpha_i \otimes 1, \, \beta_i \otimes 1 \rangle \simeq \mathbb{K}^{4b}  \\
H^2(\Sigma_b \times \Sigma_b, \, \mathbb{K}) & = \langle 1 \otimes \gamma, \, \alpha_i \otimes \alpha_j, \, \alpha_i \otimes \beta_j, \, \beta_i \otimes \alpha_j, \, \beta_i \otimes \beta_j, \, \gamma \otimes 1 \rangle \simeq \mathbb{K}^{4b^2+2} \\
H^3(\Sigma_b \times \Sigma_b, \, \mathbb{K}) & = \langle \gamma \otimes \alpha_i, \, \gamma \otimes \beta_i, \, \alpha_i \otimes \gamma, \, \beta_i \otimes \gamma \rangle \simeq \mathbb{K}^{4b} \\
H^4(\Sigma_b \times \Sigma_b, \, \mathbb{K}) & = \langle \gamma \otimes \gamma \rangle \simeq \mathbb{K},
\end{split}
\end{equation}
where $i, \, j \in \{1, \ldots, b\}$ and $\langle \;\; \rangle$ means ``span over $\mathbb{K}$''. 

\begin{proposition} \label{prop:cup-product-easy}
For every field $\mathbb{K}$, the cup-product pairing
\begin{equation} \label{eq:cup-product-easy}
\xi \colon \wedge^2 H^1(\Sigma_b \times \Sigma_b, \, \mathbb{K}) \longrightarrow H^2(\Sigma_b \times \Sigma_b, \, \mathbb{K})
\end{equation}
is surjective.
\end{proposition}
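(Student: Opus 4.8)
The plan is to prove surjectivity directly, by checking that every one of the $4b^2+2$ basis vectors of $H^2(\Sigma_b \times \Sigma_b, \, \mathbb{K})$ listed in \eqref{eq:cohomology-product} lies in the image of $\xi$, each being exhibited as the cup product of two degree-one classes. First I would record the two algebraic ingredients. The ring structure of $H^*(\Sigma_b, \, \mathbb{K})$ in the symplectic basis reads $\alpha_i \cup \beta_j = \delta_{ij}\, \gamma$, $\beta_i \cup \alpha_j = -\delta_{ij}\, \gamma$, and $\alpha_i \cup \alpha_j = \beta_i \cup \beta_j = 0$ for all $i, \, j$; the Künneth cup-product formula reads $(a \otimes b) \cup (c \otimes d) = (-1)^{\deg b \, \deg c}\,(a \cup c) \otimes (b \cup d)$.

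Second, I would dispose of the $4b^2$ ``mixed'' generators, which are immediately decomposable: for all $i, \, j$ one has $(\alpha_i \otimes 1) \cup (1 \otimes \alpha_j) = \alpha_i \otimes \alpha_j$, and likewise $\alpha_i \otimes \beta_j$, $\beta_i \otimes \alpha_j$, $\beta_i \otimes \beta_j$ arise as the cup product of the appropriate first-factor class $\alpha_i \otimes 1$ or $\beta_i \otimes 1$ with the second-factor class $1 \otimes \alpha_j$ or $1 \otimes \beta_j$. All Koszul signs here are trivial, since in each such product one tensor factor has degree $0$. Third, the two remaining generators $\gamma \otimes 1$ and $1 \otimes \gamma$ are produced from cup products \emph{within} a single factor, using $\alpha_1 \cup \beta_1 = \gamma$: explicitly $(\alpha_1 \otimes 1) \cup (\beta_1 \otimes 1) = \gamma \otimes 1$ and $(1 \otimes \alpha_1) \cup (1 \otimes \beta_1) = 1 \otimes \gamma$.

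Since every basis vector of $H^2(\Sigma_b \times \Sigma_b, \, \mathbb{K})$ is thus realized as $\xi(u \wedge v)$ for a decomposable class $u \wedge v \in \wedge^2 H^1$, the map $\xi$ is surjective. I expect no serious obstacle: the statement reduces to a finite, explicit list of cup-product computations. The only points requiring minor care are the Koszul sign conventions, which are irrelevant for surjectivity as they merely rescale a generator by a unit of $\mathbb{K}$, and the vanishing $\alpha_i \cup \alpha_j = \beta_i \cup \beta_j = 0$, which holds over \emph{any} field---including characteristic $2$---because it reflects the intersection form of the surface rather than graded-commutativity alone.
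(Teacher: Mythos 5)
Your proof is correct and takes essentially the same route as the paper: the paper's proof likewise combines the K\"unneth cup-product formula $\xi(x \otimes y, \, z \otimes w) = (-1)^{\deg(y)\deg(z)}\, xz \otimes yw$ with the symplectic relations $\alpha_i \beta_j = -\beta_j\alpha_i = \delta_{ij}\gamma$ and then observes that every generator of $H^2(\Sigma_b \times \Sigma_b, \, \mathbb{K})$ listed in \eqref{eq:cohomology-product} is decomposable. You merely spell out the explicit cup products (and the characteristic-$2$ caveat) that the paper leaves implicit, which is a harmless expansion of the same argument.
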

\begin{proof}

The cup product in the graded-commutative tensor algebra  $H^{\ast}(\Sigma_b \times \Sigma_b, \, \mathbb{K}) = H^{\ast}(\Sigma_b, \, \mathbb{K}) \otimes H^{\ast}(\Sigma_b, \, \mathbb{K}) $ is given by
\begin{equation} \label{eq:cup-in-tensor}
\xi(x \otimes y, \, z \otimes w) = (-1)^{\mathrm{deg}(y) \, \mathrm{deg}(z)} xz \otimes yw,
\end{equation}
see \cite[p. 219]{Hat02}. The basis in \eqref{eq:symplectic} being symplectic, we have $\alpha_i \beta_j=- \beta_j \alpha_i=\delta_{ij} \gamma$ (where $\delta_{ij}$ is the Kronecker symbol) and this, together with \eqref{eq:cohomology-product}, implies the claim.
\end{proof}
If $X$ is a topological space and $X^h$ is its $h$th Cartesian product, we denote by $\Delta \subset X^h$ the big diagonal, namely
\begin{equation*}
\Delta=\{(x_1,\ldots, x_h) \in X^h \, | \, \ x_i = x_j \; \; \textrm{for some}\; \; i \neq j\}.
\end{equation*}

\begin{definition} \label{def:configuration}
The $h$th \emph{ordered configuration space} of $X$ is defined as
\begin{equation} \label{eq:n-conf}
X^h - \Delta=\{(x_1,\ldots, x_h) \in X^h \, | \, \ x_i \neq x_j \;\; \textrm{for all}\; \;i \neq j\}.
\end{equation}
\end{definition}

We will focus on the case $X=\Sigma_b$ and $h=2$, for which the following result holds, see \cite[Corollary 12]{Az15}, \cite{Kr94}, \cite{To93}. 
\begin{proposition} \label{prop:poincare-poly}
The Betti numbers of $\Sigma_b \times \Sigma_b - \Delta$ are
\begin{equation} \label{eq:poincare-poly}
\mathsf{b}_0=1, \quad \mathsf{b}_1=4g, \quad \mathsf{b}_2=4g^2+1, \quad  \mathsf{b}_3=2g, \quad  \mathsf{b}_4=0.
\end{equation}
Moreover, the inclusion map $\iota \colon \Sigma_b \times \Sigma_b - \Delta \to \Sigma_b \times \Sigma_b$ induces isomorphisms
\begin{equation} \label{eq:delta-identifications}
\begin{split}
H^1(\Sigma_b \times \Sigma_b - \Delta, \, \mathbb{Q}) & \simeq H^1(\Sigma_b \times \Sigma_b, \, \mathbb{Q}) \\
H^2(\Sigma_b \times \Sigma_b - \Delta, \, \mathbb{Q}) & \simeq H^2(\Sigma_b \times \Sigma_b, \, \mathbb{Q})/ \langle \delta \rangle \\
H^3(\Sigma_b \times \Sigma_b - \Delta, \, \mathbb{Q}) & \simeq H^3(\Sigma_b \times \Sigma_b, \, \mathbb{Q})/\langle (1 \otimes \alpha_i)\delta, \, (1 \otimes \beta_i) \delta \rangle,
\end{split}
\end{equation}
where $\delta \in H^2(\Sigma_b \times \Sigma_b, \, \mathbb{Q})$ stands for the cohomology class of the diagonal $\Delta$.
\end{proposition}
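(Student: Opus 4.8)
The plan is to compute the cohomology of the open configuration space $\Sigma_b \times \Sigma_b - \Delta$ by relating it to the cohomology of the closed manifold $\Sigma_b \times \Sigma_b$ via the Gysin sequence associated to the diagonal. First I would observe that $\Delta \cong \Sigma_b$ is a smooth closed codimension-$2$ submanifold of the compact oriented $4$-manifold $M := \Sigma_b \times \Sigma_b$, and that its complement is precisely our configuration space. The natural tool is the long exact sequence of the pair, or equivalently the Gysin sequence relating $H^*(M - \Delta)$ to $H^*(M)$ and $H^{*-2}(\Delta)$ through the Thom class; over the field $\mathbb{Q}$ this reads
\begin{equation*}
\cdots \to H^{i-2}(\Delta, \mathbb{Q}) \stackrel{g}{\to} H^i(M, \mathbb{Q}) \stackrel{\iota^*}{\to} H^i(M - \Delta, \mathbb{Q}) \to H^{i-1}(\Delta, \mathbb{Q}) \to \cdots,
\end{equation*}
where the Gysin map $g$ sends $1 \in H^0(\Delta)$ to the diagonal class $\delta \in H^2(M)$ and, more generally, is given by multiplication by $\delta$ after pushing forward along the inclusion of $\Delta$.

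The key computational input is the self-intersection and restriction behaviour of $\delta$. Since $\Delta \subset \Sigma_b \times \Sigma_b$ is the graph of the identity, its normal bundle is the tangent bundle of $\Sigma_b$, so the self-intersection $\delta^2 = \delta \cdot \delta$ equals the Euler number $\chi(\Sigma_b) = 2 - 2b$ times the top class, reflecting $\deg(T\Sigma_b) = 2-2b$. To run the Gysin sequence I would track each graded piece using the explicit basis of $H^*(M, \mathbb{Q})$ recorded in \eqref{eq:cohomology-product}. The diagonal class admits the standard K\"unneth expansion $\delta = 1 \otimes \gamma + \gamma \otimes 1 - \sum_i(\alpha_i \otimes \beta_i - \beta_i \otimes \alpha_i)$ (up to sign conventions), and the Gysin map in each degree is cup product with $\delta$. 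For $i=1$ the map $H^{-1}(\Delta) = 0 \to H^1(M)$ forces $\iota^*$ to be injective, and since $H^0(\Delta) \to H^1(M-\Delta)$ lands in the cokernel which a dimension count (via the Betti numbers in \eqref{eq:poincare-poly}) shows to be trivial, $\iota^*$ is an isomorphism in degree $1$. For $i=2$ the incoming map $g \colon H^0(\Delta) \to H^2(M)$ sends $1 \mapsto \delta$, so its image is exactly $\langle \delta \rangle$, yielding $H^2(M-\Delta) \cong H^2(M)/\langle \delta \rangle$ once one checks that the next map $H^2(M-\Delta) \to H^1(\Delta)$ is zero (again a consequence of the Betti-number bookkeeping). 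For $i=3$ the incoming Gysin map $g \colon H^1(\Delta) \to H^3(M)$ is cup product with $\delta$, and one identifies its image with the span $\langle (1 \otimes \alpha_i)\delta, \, (1 \otimes \beta_i)\delta \rangle$ appearing in the statement, giving the third isomorphism.

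The main obstacle is verifying the exactness bookkeeping carefully enough to pin down the three quotients simultaneously: the Gysin maps are nonzero in the relevant degrees, so I must confirm at each step that the \emph{outgoing} connecting homomorphism vanishes, and this is where the Poincar\'e polynomial \eqref{eq:poincare-poly} is indispensable as a consistency check — the alternating sum and the individual Betti numbers must match the dimensions of the proposed quotients. A subtle point is the precise description of the image of $\cdot \cup \delta$ on $H^1(\Delta)$: one must show that multiplying the four generators $\gamma \otimes \alpha_i, \ldots$ by $\delta$ produces exactly the classes $(1\otimes\alpha_i)\delta$ and $(1\otimes\beta_i)\delta$ (and that the $\alpha_i\otimes\gamma$, $\beta_i\otimes\gamma$ directions restrict isomorphically), which uses the explicit K\"unneth form of $\delta$ together with the symplectic relations $\alpha_i\beta_j = \delta_{ij}\gamma$ already exploited in Proposition \ref{prop:cup-product-easy}. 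Once the Gysin maps are understood, the three isomorphisms follow formally, and the Betti numbers in \eqref{eq:poincare-poly} emerge as a byproduct; alternatively one may cite \cite{Az15, Kr94, To93} for the numerical part and use the sequence only to extract the geometric identifications \eqref{eq:delta-identifications}.
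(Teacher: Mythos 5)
Your Gysin-sequence approach is viable and genuinely different from the paper's treatment: the paper gives no proof of Proposition \ref{prop:poincare-poly} at all, quoting it from \cite{Az15}, \cite{Kr94}, \cite{To93}, and when it later proves the integral analogue (Proposition \ref{prop:integral-cohomology-conf}) it argues instead via the Leray spectral sequence of the projection $\Sigma_b \times \Sigma_b - \Delta \to \Sigma_b$, using the braid-group computation of $H_1$ to force degeneration at $E_2$. Your route is attractive precisely because it produces the identifications \eqref{eq:delta-identifications} and the Betti numbers simultaneously from the geometry of the diagonal.

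However, as written your argument contains a genuine circularity. In the cohomology Gysin sequence (with your notation $M=\Sigma_b\times\Sigma_b$) the connecting maps go $H^i(M-\Delta)\to H^{i-1}(\Delta)$ --- in particular there is no map $H^0(\Delta)\to H^1(M-\Delta)$, so that sentence in your degree-$1$ step has the arrow reversed --- and at each stage you deduce the vanishing of these connecting maps from the Betti numbers \eqref{eq:poincare-poly}, the very numbers you claim ``emerge as a byproduct''. Exactness gives that vanishing for free once one knows the \emph{next} Gysin map is injective, and this is what must be verified directly; it is easy, and it should be the spine of the proof. First, $g\colon H^0(\Delta)\to H^2(M)$ sends $1\mapsto\delta\neq 0$, hence is injective, so $\iota^*$ is an isomorphism in degree $1$. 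Second, restriction $H^*(M)\to H^*(\Delta)$ is surjective (the diagonal composed with either projection is the identity), so the projection formula gives $g(\alpha_i)=(1\otimes\alpha_i)\,\delta=\gamma\otimes\alpha_i+\alpha_i\otimes\gamma$ and $g(\beta_i)=(1\otimes\beta_i)\,\delta=\gamma\otimes\beta_i+\beta_i\otimes\gamma$ (cf. Remark \ref{rem:class-of-diagonal}); these $2b$ classes are visibly linearly independent in $H^3(M)$, so $g\colon H^1(\Delta)\to H^3(M)$ is injective, which kills the connecting map out of $H^2(M-\Delta)$ and identifies $H^2(M-\Delta)$ with $H^2(M)/\langle\delta\rangle$, the image of $g$ in degree $3$ being exactly the span in the statement. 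Third, $g\colon H^2(\Delta)\to H^4(M)$ sends the generator to $(1\otimes\gamma)\,\delta=\gamma\otimes\gamma\neq 0$ --- equivalently $\delta^2=(2-2b)\,\gamma\otimes\gamma\neq 0$, the self-intersection fact you state but never actually use --- so it too is injective, killing the connecting map out of $H^3(M-\Delta)$ and yielding the third identification together with $H^4(M-\Delta)=0$. With these three injectivity checks in place the Betti numbers genuinely drop out of the exact sequence; without them, your only non-circular option is the fallback you mention at the end (citing \cite{Az15}, \cite{Kr94}, \cite{To93} for the numbers), in which case the numerical half of the Proposition is imported rather than proved.
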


\subsection{Surface pure braid groups} \label{subsec:braid}

Let $\Sigma_b$ be a smooth projective curve of genus $b \geq 2$ as above, and let  $\mathscr{P} = \{p_1, \ldots, p_h\} \subset \Sigma_b$ be an ordered set of $h$ distinct points. A \emph{pure geometric braid} on $\Sigma_b$ based at $\mathscr{P}$ is a $h$-tuple $(\alpha_1, \ldots, \alpha_h)$ of paths $\alpha_i \colon [0, \, 1] \to \Sigma_b$ such that
\begin{itemize}
\item $\alpha_i(0) = \alpha_i(1)=p_i \quad \textrm{for all }i \in \{1, \ldots, h\}$
\item the points $\alpha_1(t), \ldots, \alpha_h(t) \in \Sigma_b$ are pairwise distinct for all $t \in [0, \, 1]$,
\end{itemize}
see Figure \ref{fig:braid}.
\begin{figure}[H]
\begin{center}
\includegraphics*[totalheight=4.5 cm]{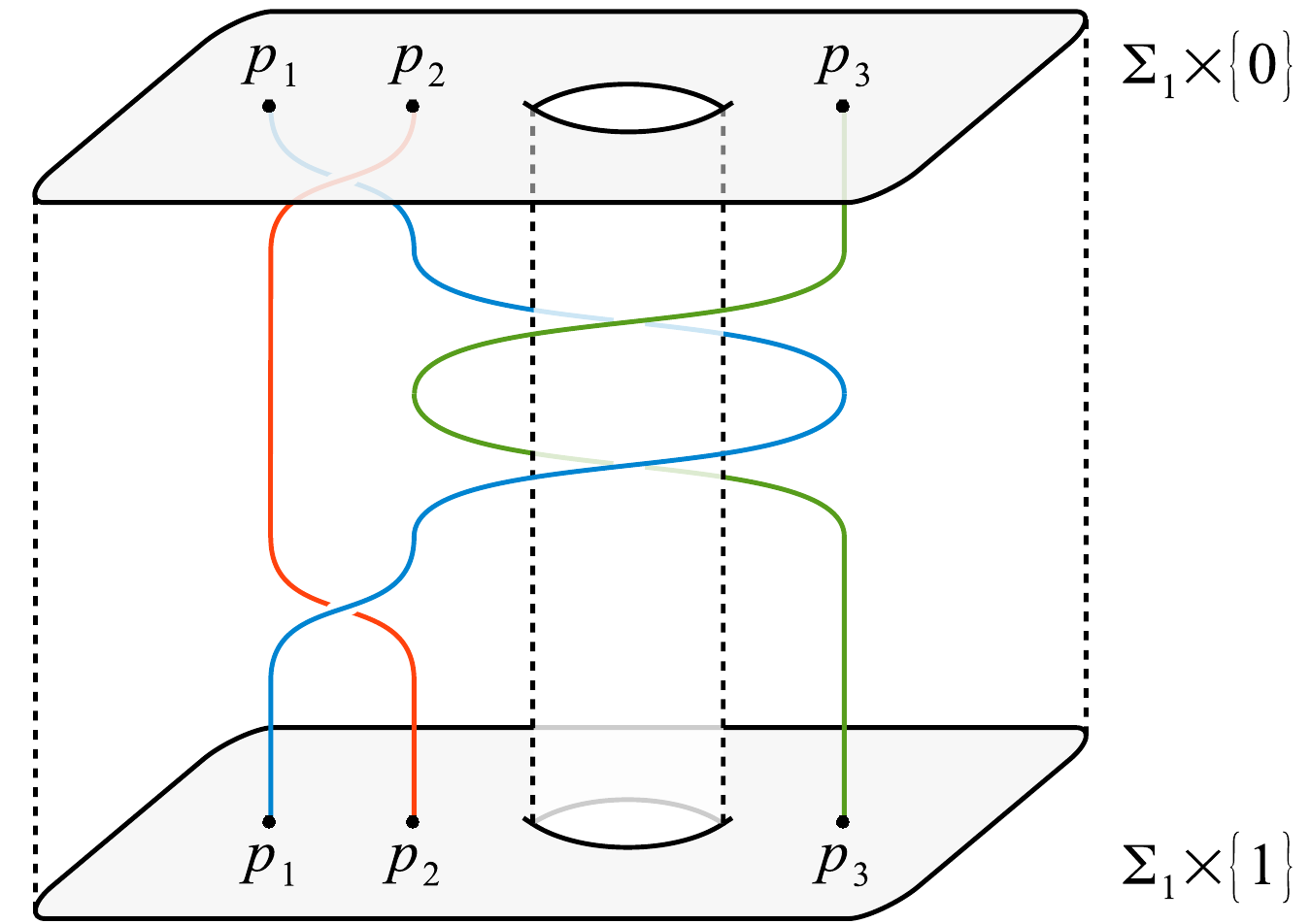}
\caption{A pure braid on $3$ strings} \label{fig:braid}
\end{center}
\end{figure}

\begin{definition} \label{def:braid}
The \emph{pure braid group} on $h$ strings on $\Sigma_b$ is the group $\mathsf{P}_{h}(\Sigma_b)$ whose elements are the pure braids  based at $\mathscr{P}$ and whose operation is the usual concatenation of paths, up to homotopies among braids.
\end{definition}
It can be shown that $\mathsf{P}_{h}(\Sigma_b)$ does not depend on the choice of the set $\mathscr{P}$, and that there is an isomorphism
\begin{equation} \label{eq:iso-braids}
\mathsf{P}_{h}(\Sigma_b) \simeq \pi_1((\Sigma_b)^h - \Delta, \, \mathscr{P}).
\end{equation}

Moreover, the group $\mathsf{P}_{h}(\Sigma_b)$  is finitely presented for all pairs $(b, \, h)$, and explicit presentations can be found in \cite{Bel04, Bir69, GG04, S70}. Again, we will focus on the case $h=2$, referring the reader to \cite[Sections 1-3]{GG04} for a treatment of the general situation.
\begin{proposition}[{\cite[Theorem 1]{GG04}}] \label{prop:split-braid}
Let $p_1, \, p_2 \in \Sigma_b$, with $b \geq 2$. Then the map of pointed topological spaces given by the projection onto the first component 
\begin{equation} \label{eq:proj-first}
(\Sigma_b \times \Sigma_b - \Delta, \, (p_1, \, p_2)\,) \to (\Sigma_b, \, p_1) 
\end{equation}
induces a split short exact sequence of groups
\begin{equation} \label{eq:split-braid}
1 \longrightarrow \pi_1(\Sigma_b - \{p_1\}, \, p_2)  \longrightarrow \mathsf{P}_{2}(\Sigma_b) \longrightarrow \pi_1(\Sigma_b, \, p_1) \longrightarrow 1.
\end{equation}
\end{proposition}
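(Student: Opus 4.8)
The plan is to deduce the statement from the Fadell--Neuwirth fibration theorem combined with the asphericity of $\Sigma_b$, and then to obtain the splitting from an explicit geometric cross-section.

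First I would translate everything into the language of configuration spaces. By the identification \eqref{eq:iso-braids} we have $\mathsf{P}_2(\Sigma_b) \simeq \pi_1(\Sigma_b \times \Sigma_b - \Delta, \, (p_1, \, p_2))$, and the map \eqref{eq:proj-first} is nothing but the restriction to $\Sigma_b \times \Sigma_b - \Delta$ of the first projection $\Sigma_b \times \Sigma_b \to \Sigma_b$. The key input is then the Fadell--Neuwirth theorem: this first-coordinate projection $\pi \colon \Sigma_b \times \Sigma_b - \Delta \to \Sigma_b$, $(x_1, \, x_2) \mapsto x_1$, is a locally trivial fibration whose fibre over $p_1$ is exactly the punctured surface $\Sigma_b - \{p_1\}$ (the second point may roam anywhere except onto the first).

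Next I would run the long exact sequence in homotopy of this fibration,
\[
\pi_2(\Sigma_b, \, p_1) \to \pi_1(\Sigma_b - \{p_1\}, \, p_2) \to \pi_1(\Sigma_b \times \Sigma_b - \Delta, \, (p_1, \, p_2)) \to \pi_1(\Sigma_b, \, p_1) \to \pi_0(\Sigma_b - \{p_1\}),
\]
and simplify its two outer terms. Since $b \geq 2$, the universal cover of $\Sigma_b$ is contractible (it is the hyperbolic plane), so $\Sigma_b$ is aspherical and $\pi_2(\Sigma_b) = 0$; moreover $\Sigma_b - \{p_1\}$ is connected, so the last pointed set is trivial. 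The sequence thus collapses to the short exact sequence of the statement, once $\pi_1(\Sigma_b \times \Sigma_b - \Delta)$ is replaced by $\mathsf{P}_2(\Sigma_b)$ via \eqref{eq:iso-braids}.

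Finally, for the splitting I would observe that the fibre $\Sigma_b - \{p_1\}$ is homotopy equivalent to a wedge of $2b$ circles, while base and total space are aspherical, so that a group-theoretic splitting of the sequence is equivalent to a continuous section of $\pi$ (basepoint-preserving up to homotopy). Such a section assigns to each $x \in \Sigma_b$ a second point $\sigma(x) \neq x$ depending continuously on $x$; concretely one pushes $x$ off along a fixed local framing and then corrects the resulting monodromy by letting the second point wind around the first inside the fibre. This is precisely the cross-section furnished by \cite[Theorem 1]{GG04}, and the induced map on $\pi_1$ gives a homomorphic right inverse of the surjection $\mathsf{P}_2(\Sigma_b) \to \pi_1(\Sigma_b, \, p_1)$. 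The exactness is the routine part, following formally from Fadell--Neuwirth and $\pi_2(\Sigma_b) = 0$; \emph{the hard part is the splitting}, since on a surface of genus $b \geq 2$ the tangent bundle is nontrivial and a naive parallel push-off does not close up, so exhibiting the section requires genuine geometric input.
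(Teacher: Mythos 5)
Your route is the right one, and in fact it is the only one available for comparison: the paper does not prove this proposition at all, it imports it wholesale as \cite[Theorem 1]{GG04}. The mechanism behind that citation is exactly what you describe, and the paper itself confirms this elsewhere: the Fadell--Neuwirth fibration \cite{FN62} is invoked in the proof of Proposition \ref{prop:integral-cohomology-conf}, and the geometric cross-section of Gon\c{c}alves--Guaschi is recorded algebraically in part (\emph{iii}) of Remark \ref{rmk:presentation-braid}. Your exactness half is complete and correct: the projection \eqref{eq:proj-first} is a locally trivial fibration with fibre $\Sigma_b - \{p_1\}$, and the long exact homotopy sequence, together with $\pi_2(\Sigma_b)=0$ (asphericity, since $b \geq 2$) and the connectedness of the fibre, collapses to \eqref{eq:split-braid} after applying \eqref{eq:iso-braids}.

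The gap is in the splitting, which you yourself flag as the hard part. Two problems. First, your sketched construction is not a construction: there is no ``fixed local framing'' on $\Sigma_b$, since $\chi(\Sigma_b)=2-2b \neq 0$ obstructs even a nonvanishing vector field by Poincar\'e--Hopf, and ``correcting the monodromy by letting the second point wind around the first'' is not a defined operation. Second, your fallback is to cite \cite[Theorem 1]{GG04} --- but that is precisely the statement being proved, so as a standalone argument this is circular. A self-contained repair is available: exhibit a fixed-point-free continuous self-map $s \colon \Sigma_b \to \Sigma_b$, for instance the deck involution of the orientation double cover $\Sigma_b \to N_{b+1}$, where $N_{b+1}$ denotes the closed non-orientable surface of Euler characteristic $1-b$ (every orientable $\Sigma_b$ arises as such a double cover, and a nontrivial deck transformation has no fixed points). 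Then $x \mapsto (x, \, s(x))$ is an honest section of the fibration \eqref{eq:proj-first}; conjugating the induced homomorphism on fundamental groups by a path inside the fibre from $(p_1, \, p_2)$ to $(p_1, \, s(p_1))$ --- such a path projects to the constant path at $p_1$, so the conjugation commutes with the projection --- yields a homomorphism $\pi_1(\Sigma_b, \, p_1) \to \mathsf{P}_2(\Sigma_b)$ splitting \eqref{eq:split-braid} on the nose. Alternatively, cite the explicit cross-section constructed in \cite[pp. 190--191]{GG04} (the construction, not the theorem), which is exactly what the paper does in Remark \ref{rmk:presentation-braid}; either way the circular appeal is removed.
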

For all $j \in \{1, \ldots, b\}$, let us consider  the elements
\begin{equation} \label{eq:braid-generators}
\rho_{1j}, \; \tau_{1 j}, \; \rho_{2j}, \; \tau_{2 j}
\end{equation}
of $\mathsf{P}_{2}(\Sigma_b)$ represented by the pure braids shown in Figure \ref{fig1}.

\begin{figure}[H]
\begin{center}
\includegraphics*[totalheight=3 cm]{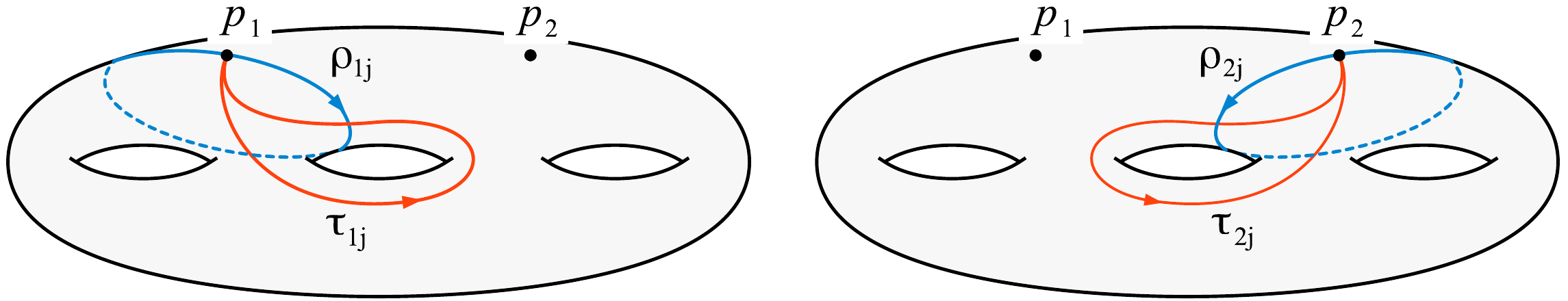}
\caption{The pure braids $\rho_{1j}, \, \tau_{1j}$, $\rho_{2j}, \, \tau_{2j}$ on $\Sigma_b$} \label{fig1}
\end{center}
\end{figure}

If $\ell \neq i$, the path corresponding to $\rho_{ij}$ and $\tau_{ij}$ based at $p_{\ell}$ is the constant path. Moreover, let $A_{12}$ be the pure braid shown in Figure \ref{fig2}.

\begin{figure}[H]
\begin{center}
\includegraphics*[totalheight=1.5 cm]{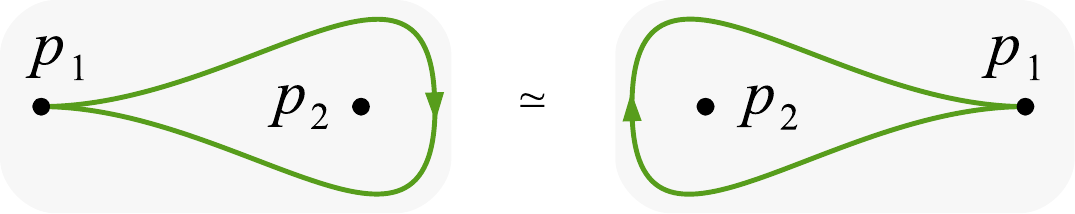}
\caption{The pure braid $A_{12}$ on $\Sigma_b$} \label{fig2}
\end{center}
\end{figure}

The elements
\begin{equation} \label{eq:gens-kernel}
\rho_{21},\ldots, \rho_{2b}, \; \tau_{21},\ldots, \tau_{2b}, \; A_{12}
\end{equation}
can be seen as generators of the kernel $\pi_1(\Sigma_b - \{p_1\}, \, p_2)$ in \eqref{eq:split-braid}, whereas the elements
\begin{equation} \label{eq:gens-quotient}
\rho_{11},\ldots, \rho_{1b}, \; \tau_{11},\ldots, \tau_{1b}
\end{equation}
are lifts of a set of generators of $\pi_1(\Sigma_b, \, p_1)$ via the quotient map $\mathsf{P}_{2}(\Sigma_b) \to \pi_1(\Sigma_b, \, p_1)$, namely, they form a complete system of coset representatives for $\pi_1(\Sigma_b, \, p_1)$. By Proposition \ref{prop:split-braid}, the group $\mathsf{P}_{2}(\Sigma_b)$ is a semi-direct product of the two groups  $\pi_1(\Sigma_b - \{p_1\}, \, p_2)$ and $\pi_1(\Sigma_b, \, p_1)$, whose presentations are both well-known; then, in order to write down a presentation for our braid group, it only remains to specify how the generators in \eqref{eq:gens-quotient} act by conjugation on those in \eqref{eq:gens-kernel}. This information is encoded in  the following result, where the conjugacy relations are expressed by using commutators (i.e., instead of $xyx^{-1}=z$ we write $[x, \, y]=zy^{-1}$).

\begin{theorem}[{\cite[Theorem 7]{GG04}}] \label{thm:presentation-braid}
The group $\mathsf{P}_{2}(\Sigma_b)$ admits the following presentation. \\ \\
\emph{Generators}

$\rho_{1j}, \; \tau_{1j}, \;  \rho_{2j}, \; \tau_{2 j}, \; A_{1 2}, \quad j=1,\ldots, b.$ \\\\
\emph{Relations}
\begin{itemize}
\item \emph{Surface relations:}
\begin{align} \label{eq:presentation-0}
 & [\rho_{1b}^{-1}, \, \tau_{1b}^{-1}] \, \tau_{1b}^{-1} \, [\rho_{1 \,b-1}^{-1}, \, \tau_{1 \,b-1}^{-1}] \, \tau_{1\,b-1}^{-1} \cdots [\rho_{11}^{-1}, \, \tau_{11}^{-1}] \, \tau_{11}^{-1} \, (\tau_{11} \, \tau_{12} \cdots \tau_{1b})=A_{12} \\
& [\rho_{21}^{-1}, \, \tau_{21}] \, \tau_{21} \, [\rho_{22}^{-1}, \, \tau_{22}] \, \tau_{22}\cdots   [\rho_{2b}^{-1}, \, \tau_{2b}] \, \tau_{2b} \, (\tau_{2b}^{-1} \, \tau_{2 \, b-1}^{-1} \cdots \tau_{21}^{-1}) =A_{12}^{-1}
\end{align}
\item \emph{Action of} $\rho_{1j}:$
\begin{align} \label{eq:presentation-1}
[\rho_{1j}, \, \rho_{2k}]& =1  &  \mathrm{if} \; \; j < k \\
[\rho_{1j}, \, \rho_{2j}]& = 1 & \\
[\rho_{1j}, \, \rho_{2k}]& =A_{12}^{-1} \, \rho_{2k}\, \rho_{2j}^{-1} \, A_{12} \, \rho_{2j}\, \rho_{2k}^{-1} \; \;&  \mathrm{if} \;  \; j > k \\
& \\
[\rho_{1j}, \, \tau_{2k}]& =1 & \mathrm{if}\;  \; j < k \\
[\rho_{1j}, \, \tau_{2j}]& = A_{12}^{-1} & \\
[\rho_{1j}, \, \tau_{2k}]& =[A_{12}^{-1}, \, \tau_{2k}]  & \mathrm{if}\;  \; j > k \\
& \\
[\rho_{1j}, \,A_{12}]& =[\rho_{2j}^{-1}, \,A_{12}] &
\end{align}
\item \emph{Action of} $\rho_{1j}^{-1}:$
\begin{align} \label{eq:presentation-2}
[\rho_{1j}^{-1}, \, \rho_{2k}]& =1 & \mathrm{if}\;  \; j < k \\
[\rho_{1j}^{-1}, \, \rho_{2j}]& = 1 & \\
[\rho_{1j}^{-1}, \, \rho_{2k}]& =\rho_{2j} \, A_{12} \, \rho_{2j}^{-1}\, \rho_{2k} \, A_{12}^{-1} \, \rho_{2k}^{-1}\; \; & \mathrm{if}\;  \; j > k  \\
& \\
[\rho_{1j}^{-1}, \, \tau_{2k}]& =1 & \mathrm{if}\;  \; j < k \\
[\rho_{1j}^{-1}, \, \tau_{2j}]& = \rho_{2j}\,A_{12}\,\rho_{2j}^{-1} &  \\
[\rho_{1j}^{-1}, \, \tau_{2k}]& =\rho_{2j}\, A_{12}\, \rho_{2j}^{-1}\, \tau_{2k}\, \rho_{2j}\,A_{12}^{-1}\, \rho_{2j}^{-1}\, \tau_{2k}^{-1} \; \; &  \mathrm{if}\;  \; j > k \\
& \\
[\rho_{1j}^{-1}, \,A_{12}]& =[\rho_{2j}, \,A_{12}] &
\end{align}
\item \emph{Action of} $\tau_{1j}:$
\begin{align} \label{eq:presentation-3}
[\tau_{1j}, \, \rho_{2k}]& =1 & \mathrm{if}\;  \; j < k \\
[\tau_{1j}, \, \rho_{2j}]& = \tau_{2j}^{-1}\, A_{12}\, \tau_{2j} & \\
[\tau_{1j}, \, \rho_{2k}]& =[\tau_{2j}^{-1},\, A_{12}] \; \; & \mathrm{if}  \;\; j > k \\
& \\
[\tau_{1j}, \, \tau_{2k}]& =1 & \mathrm{if}\; \; j < k \\
[\tau_{1j}, \, \tau_{2j}]& = [\tau_{2j}^{-1}, \, A_{12}] & \\
[\tau_{1j}, \, \tau_{2k}]& =\tau_{2j} ^{-1}\,A_{12}\, \tau_{2j}\, A_{12}^{-1}\, \tau_{2k}\,A_{12}\, \tau_{2j} ^{-1}\,A_{12}^{-1}\, \tau_{2j}\,\tau_{2k}^{-1} \; \;  & \mathrm{if}\;  \; j > k \\
&  \\
[\tau_{1j}, \,A_{12}]& =[\tau_{2j}^{-1}, \,A_{12}] &
\end{align}
\item \emph{Action of} $\tau_{1j}^{-1}:$
\begin{align} \label{eq:presentation-4}
[\tau_{1j}^{-1}, \, \rho_{2k}]& =1 & \mathrm{if}\;  \; j < k \\
[\tau_{1j}^{-1}, \, \rho_{2j}]& =  A_{12}^{-1} & \\
[\tau_{1j}^{-1}, \, \rho_{2k}]& =[ A_{12}^{-1}, \, \tau_{2j}] \; \; & \mathrm{if}\;  \; j > k \\
& \\
[\tau_{1j}^{-1}, \, \tau_{2k}]& =1 & \mathrm{if}\;  \; j < k   \\
[\tau_{1j}^{-1}, \, \tau_{2j}]& = [ A_{12}^{-1}, \, \tau_{2j}] & \\
[\tau_{1j}^{-1}, \, \tau_{2k}]& =A_{12}^{-1} \, \tau_{2j} \, A_{12}\,\tau_{2j}^{-1}\, \tau_{2k}\, \tau_{2j}
\,A_{12}^{-1} \, \tau_{2j}^{-1} \, A_{12} \, \tau_{2k}^{-1}
 \; \;  & \mathrm{if}\;  \; j > k \\
&  \\
[\tau_{1j}^{-1}, \,A_{12}]& =[A_{12}^{-1}, \, \tau_{2j}] &
\end{align}
\end{itemize}
\end{theorem}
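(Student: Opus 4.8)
The statement is a presentation of $\mathsf{P}_2(\Sigma_b)$ viewed as a group extension, so the plan is to feed the split short exact sequence of Proposition \ref{prop:split-braid} into the standard recipe for presenting an extension from presentations of its kernel and quotient (see, e.g., Johnson, \emph{Presentations of Groups}). Write $K = \pi_1(\Sigma_b - \{p_1\}, p_2)$ and $Q = \pi_1(\Sigma_b, p_1)$, so that
\[
1 \longrightarrow K \longrightarrow \mathsf{P}_2(\Sigma_b) \longrightarrow Q \longrightarrow 1
\]
is split, exhibiting $\mathsf{P}_2(\Sigma_b)$ as a semidirect product $K \rtimes Q$. If $K = \langle \mathbf{x} \mid \mathbf{r}\rangle$, $Q = \langle \mathbf{y} \mid \mathbf{s}\rangle$, and $\tilde{\mathbf{y}}$ is a chosen set of lifts of $\mathbf{y}$, then $\mathsf{P}_2(\Sigma_b)$ is generated by $\mathbf{x} \cup \tilde{\mathbf{y}}$ subject to three families of relations: (i) the kernel relators $\mathbf{r}$; (ii) for each $s \in \mathbf{s}$, an identity $\tilde{s} = w_s$, where the lifted relator $\tilde{s}$ lands in $K$ and $w_s$ is its expression in $\mathbf{x}$; and (iii) for each $y \in \mathbf{y}$ and $x \in \mathbf{x}$, a relation rewriting the conjugate $\tilde{y}\,x\,\tilde{y}^{-1}$ as a word in $\mathbf{x}$, which is legitimate because $K$ is normal.

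First I would record the presentations of the two factors in the generators of the statement. The quotient $Q$ is a closed genus-$b$ surface group, presented on $\rho_{1j}, \tau_{1j}$ with the single commutator relator. The kernel $K$ is the group of a once-punctured genus-$b$ surface, hence free of rank $2b$, which I would nonetheless present redundantly on the $2b+1$ elements $\rho_{2j}, \tau_{2j}, A_{12}$, with one relation identifying the twisted product of commutators of the $\rho_{2j},\tau_{2j}$ with the puncture loop $A_{12}^{-1}$; this relation is precisely the second surface relation of the statement and accounts for family (i). Family (ii) then produces the first surface relation: the chosen geometric lifts $\rho_{1j}, \tau_{1j}$ are coset representatives rather than a homomorphic section, so the lift of the closed-surface relator of $Q$ is not trivial in $\mathsf{P}_2(\Sigma_b)$ but equals the winding class $A_{12}$, because contracting the base loop sweeps the fibre once around the diagonal.

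The substance of the proof is family (iii): the monodromy of the Fadell--Neuwirth fibration \eqref{eq:proj-first} acting on $\pi_1$ of the punctured fibre. Concretely, I would push the basepoint $p_1$ once around each base loop $\rho_{1j}$ or $\tau_{1j}$ and track how the induced isotopy of $\Sigma_b - \{p_1\}$ drags each fibre generator $\rho_{2k}, \tau_{2k}, A_{12}$, reading the answer off the braid pictures in Figures \ref{fig1}--\ref{fig2}. The trichotomy $j<k$, $j=k$, $j>k$ reflects whether the handle carrying the fibre generator lies after, at, or before the handle being swept, and the conjugations by $A_{12}$ decorating the $j \geq k$ cases record the extra winding of the moving strand about the diagonal. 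I would treat $\rho_{1j}, \tau_{1j}$ and their inverses separately, as in the statement, since inverting a base generator reverses the sweep and conjugates the resulting formulas.

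The main obstacle is exactly this geometric bookkeeping: extracting the precise conjugacy words --- above all the placement and exponents of the $A_{12}$-conjugations in the $j>k$ cases --- from the diagrams without sign or ordering errors. Two consistency checks would close the argument. First, one must verify that the prescribed assignment $x \mapsto \tilde{y}\,x\,\tilde{y}^{-1}$ is a well-defined automorphism of $K$, i.e. that it preserves the defining relation of $K$ and, in particular, fixes the puncture class $A_{12}$ up to the stated conjugation; this guarantees that families (i)--(iii) are mutually compatible. Second, a Tietze or Reidemeister--Schreier comparison --- for instance, checking that abelianizing the candidate presentation returns the correct $H_1(\mathsf{P}_2(\Sigma_b))$ --- confirms that no relation has been lost, and together with the extension recipe this yields the presentation.
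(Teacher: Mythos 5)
Your structural skeleton is the right one, and in fact it is the one used in the cited source: the paper does not prove this theorem at all, but imports it verbatim from Gon\c{c}alves--Guaschi \cite[Theorem 7]{GG04}, whose proof runs exactly as you describe --- the split Fadell--Neuwirth sequence of Proposition \ref{prop:split-braid}, the standard recipe for presenting a group extension from presentations of kernel and quotient, with the kernel presented redundantly on $\rho_{2j}, \tau_{2j}, A_{12}$ (your family (i) is the second surface relation), the lifted quotient relator landing on $A_{12}$ (your family (ii) is the first surface relation, and your observation that the naive lifts fail to be a section is consistent with Remark \ref{rmk:presentation-braid}(iii), where the actual section must modify $\tau_{1b}$), and the conjugation action supplying everything else.

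The genuine gap is that your proposal stops exactly where the theorem begins. The assertion being proved is not ``$\mathsf{P}_2(\Sigma_b)$ admits a presentation of extension type'' --- that much is formal --- but the explicit list of action words in \eqref{eq:presentation-1}--\eqref{eq:presentation-4}: which conjugates are trivial, where the $A_{12}$-conjugations sit, and with which exponents, in each of the cases $j<k$, $j=k$, $j>k$. You name this as ``the main obstacle'' and then defer it; but without carrying out those monodromy computations (by isotoping the braid pictures of Figures \ref{fig1}--\ref{fig2}, or algebraically), nothing in the statement has actually been verified. Moreover, your proposed closing check is too weak to repair this: abelianizing the candidate presentation only recovers $H_1 \simeq \mathbb{Z}^{4b}$, which every candidate with the same generator count and commutator-shaped relators would pass, so it cannot certify that the relations are the correct ones or that none is missing. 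Completeness is instead guaranteed by the extension lemma itself, but only once one has verified that each proposed relation genuinely holds in $\mathsf{P}_2(\Sigma_b)$ and that the resulting assignment defines an automorphism of the kernel fixing the puncture class appropriately --- and both verifications require precisely the geometric bookkeeping you left undone.
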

\begin{remark} \label{rmk:presentation-braid}
Let us make some comments on the previous result.
\begin{enumerate}[label=(\emph{\roman*})]
\item As remarked in \cite[p. 196]{GG04}, we might deduce the action of $\rho_{ij}^{-1}$ and $\tau_{ij}^{-1}$ from that of $\rho_{ij}$ and $\tau_{ij}$, in other words the sets of relations \eqref{eq:presentation-2} and \eqref{eq:presentation-4} are actually redundant. However, since the required computations are rather cumbersome, for the sake of clarity we preferred to explicitly write down all the relations.
\item Tedious but straightforward calculations show that the presentation given in Theorem \ref{thm:presentation-braid} is invariant under the substitutions
\begin{equation*}
A_{12} \longleftrightarrow A_{12}^{-1}, \quad \tau_{1j} \longleftrightarrow \tau_{2\; \; b+1-j}^{-1}, \quad  \rho_{1j} \longleftrightarrow \rho_{2\;\; b+1-j},
\end{equation*}
where $j \in \{1,\ldots, b \}$. These substitutions correspond to the order $2$ automorphism of $\mathsf{P}_2(\Sigma_b)$ induced by the involution of $\Sigma_b$ given by a reflection switching the $j$-th handle with the $(b+1-j)$-th handle for all $j$. Hence we can exchange the roles of $p_1$ and $p_2$ in \eqref{eq:split-braid}, and
see $\mathsf{P}_2(\Sigma_b)$ as the middle term of a split short exact sequence of the form
\begin{equation} \label{eq:split-braid-new}
1 \longrightarrow \pi_1(\Sigma_b - \{p_2\}, \, p_1)  \longrightarrow \mathsf{P}_{2}(\Sigma_b) \longrightarrow \pi_1(\Sigma_b, \, p_2) \longrightarrow 1,
\end{equation}
induced by the projection onto the second component 
\begin{equation} \label{eq:proj-second}
(\Sigma_b \times \Sigma_b - \Delta, \, (p_1, \, p_2)\,) \to (\Sigma_b, \, p_2). 
\end{equation}
Now the elements
\begin{equation} \label{eq:gens-kernel-new}
\rho_{11},\ldots, \rho_{1b}, \; \tau_{11},\ldots, \tau_{1b}, \; A_{12}
\end{equation}
can be seen as generators of the kernel $\pi_1(\Sigma_b - \{p_2\}, \, p_1)$ in \eqref{eq:split-braid-new}, whereas the elements
\begin{equation} \label{eq:gens-quotient-new}
\rho_{21},\ldots, \rho_{2b}, \; \tau_{21},\ldots, \tau_{2b}
\end{equation}
yield a complete system of coset representatives for $\pi_1(\Sigma_b, \, p_2)$.
\item In \cite[p. 190-191]{GG04}, the splitting of \eqref{eq:split-braid} is obtained by first constructing
an explicit geometric section, that in turn yields  the algebraic section $s \colon \pi_1(\Sigma_b, \, p_1) \to \mathsf{P}_2(\Sigma_b)$ given by
\begin{equation*}
s(\rho_{1j})=\rho_{1j}, \quad s(\tau_{1k})=\tau_{1k}, \quad s(\tau_{1b}) = \tau_{1b}A_{12}^{-1}\tau_{2b},
\end{equation*}
for all $j \in \{1, \ldots, b\}$, $k \in \{1, \ldots, b-1\}$.
\item The inclusion map $\iota \colon \Sigma_b \times \Sigma_b - \Delta \to \Sigma_b \times \Sigma_b$ induces a group epimorphism $\iota_* \colon \mathsf{P}_2(\Sigma_b) \to \pi_1(\Sigma_b \times \Sigma_b)$, whose kernel is the normal closure of the subgroup generated by $A_{12}$. Thus, given any group homomorphism $\varphi \colon \mathsf{P}_2(\Sigma_b) \to G$, it factors through $\pi_1(\Sigma_b \times \Sigma_b)$ if and only if $\varphi(A_{12})$ is trivial.
\item In terms of the isomorphism \eqref{eq:iso-braids}, the generator $A_{12}$ corresponds to the homotopy class in $\Sigma_b \times \Sigma_b - \Delta$ of a topological loop in $\Sigma_b \times \Sigma_b$ that ``winds once around $\Delta$''.
\end{enumerate}
\end{remark}

\subsection{Cohomology of $\Sigma_b \times \Sigma_b - \Delta$: arbitrary coefficients} \label{subsec:config-integer}

From Theorem \ref{thm:presentation-braid}  we easily obtain
\begin{equation} \label{eq:1-(co)homology-braid}
\mathbb{Z}^{4b} \simeq \mathsf{P}_{2}(\Sigma_b)^{\mathrm{ab}} \simeq H_1(\Sigma_b \times \Sigma_b - \Delta, \, \mathbb{Z}) \simeq  \mathrm{Hom}\, (H^1(\Sigma_b \times \Sigma_b - \Delta, \, \mathbb{Z}), \, \mathbb{Z}),
\end{equation}
where the last isomorphism follows from the Universal Coefficient Theorem as stated in  \cite[Theorem 5.5.12]{We14}. We are now ready to compute the cohomology groups of the $2$nd configuration space $\Sigma_b \times \Sigma_b - \Delta$ with coefficients in an arbitrary field. The crucial step consists in showing first that all its integral cohomology groups are torsion-free.
\begin{proposition} \label{prop:integral-cohomology-conf}
The cohomology groups of $\Sigma_b \times \Sigma_b - \Delta$ with integral coefficients are as follows$:$
\begin{equation} \label{eq:cohomology-conf-2}
\begin{split}
H^0(\Sigma_b \times \Sigma_b - \Delta, \, \mathbb{Z}) & \simeq \mathbb{Z} \\
H^1(\Sigma_b \times \Sigma_b - \Delta, \, \mathbb{Z}) & \simeq \mathbb{Z}^{4b} \\
H^2(\Sigma_b \times \Sigma_b - \Delta, \, \mathbb{Z}) & \simeq \mathbb{Z}^{4b^2+1} \\
H^3(\Sigma_b \times \Sigma_b - \Delta, \, \mathbb{Z}) & \simeq \mathbb{Z}^{2b} \\
H^4(\Sigma_b \times \Sigma_b - \Delta, \, \mathbb{Z})& =0.
\end{split}
\end{equation}
Furthermore, for all $i \in \{0, \ldots, 4 \}$ we have
\begin{equation} \label{eq:homology=cohomology}
H_i(\Sigma_b \times \Sigma_b - \Delta, \, \mathbb{Z}) \simeq \mathrm{Hom}\,(H^i(\Sigma_b \times \Sigma_b - \Delta, \, \mathbb{Z}), \, \mathbb{Z}).
\end{equation}
\end{proposition}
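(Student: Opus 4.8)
The plan is to upgrade the rational computation of Proposition~\ref{prop:poincare-poly} to $\mathbb{Z}$-coefficients by running the Gysin sequence of the complement integrally, the only new input being a primitivity (saturation) check on two Gysin images. By the Universal Coefficient Theorem the torsion subgroup of $H^{n}(\Sigma_b\times\Sigma_b-\Delta,\,\mathbb{Z})$ coincides with that of $H_{n-1}(\Sigma_b\times\Sigma_b-\Delta,\,\mathbb{Z})$, while its free rank is the Betti number $\mathsf{b}_n$ of \eqref{eq:poincare-poly}; hence it suffices to prove that every integral cohomology group is free, after which \eqref{eq:homology=cohomology} is automatic. From \eqref{eq:1-(co)homology-braid} we already know $H_0=\mathbb{Z}$ and $H_1=\mathbb{Z}^{4b}$ are free; and since $\Sigma_b\times\Sigma_b-\Delta$ is a connected, noncompact (hence open) $4$-manifold, it has the homotopy type of a CW-complex of dimension $\le 3$, so $H^4=H_4=0$, and UCT then forces $H_3$ to be torsion-free. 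The real content is therefore the freeness of $H^2$ and $H^3$.

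To get these I would invoke the Gysin long exact sequence of the smooth complex codimension-$1$ divisor $\Delta\cong\Sigma_b$ in $X:=\Sigma_b\times\Sigma_b$, whose real codimension is $2$:
\begin{equation*}
\cdots \to H^{k-2}(\Delta,\mathbb{Z}) \xrightarrow{\;\iota_{!}\;} H^{k}(X,\mathbb{Z}) \xrightarrow{\;\iota^{*}\;} H^{k}(\Sigma_b\times\Sigma_b-\Delta,\mathbb{Z}) \to H^{k-1}(\Delta,\mathbb{Z}) \xrightarrow{\;\iota_{!}\;} H^{k+1}(X,\mathbb{Z}) \to \cdots,
\end{equation*}
where $\iota_{!}$ is the Gysin pushforward, normalized by $\iota_{!}(1)=\delta$ and satisfying the projection formula $\iota_{!}(\iota^{*}c)=c\cup\delta$. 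Because $\Sigma_b$ has torsion-free cohomology, the Künneth theorem makes every $H^{*}(\Delta,\mathbb{Z})$ and $H^{*}(X,\mathbb{Z})$ free, so the whole problem reduces to analyzing the three maps $\iota_{!}\colon H^{j}(\Delta,\mathbb{Z})\to H^{j+2}(X,\mathbb{Z})$ for $j=0,1,2$, and in particular to showing each is injective with \emph{free} cokernel.

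The key computation is exactly this freeness. For $j=0$ the map is $1\mapsto\delta$; the standard diagonal expansion $\delta=\gamma\otimes 1+1\otimes\gamma-\sum_i(\alpha_i\otimes\beta_i-\beta_i\otimes\alpha_i)$ has a unit coefficient on $\gamma\otimes 1$, so $\delta$ is primitive and the cokernel is free of rank $4b^2+1$. For $j=1$ the restriction $\iota^{*}\colon H^1(X,\mathbb{Z})\to H^1(\Delta,\mathbb{Z})$ is surjective (both $1\otimes\alpha_i$ and $\alpha_i\otimes 1$ restrict to $\alpha_i$), so by the projection formula the image of $\iota_{!}$ is spanned by the $2b$ classes $(1\otimes\alpha_i)\,\delta=\gamma\otimes\alpha_i+\alpha_i\otimes\gamma$ and $(1\otimes\beta_i)\,\delta=\gamma\otimes\beta_i+\beta_i\otimes\gamma$, computed from \eqref{eq:cup-in-tensor}; these have unit coefficients in the Künneth basis of $H^3(X,\mathbb{Z})$, so the sublattice they span is saturated and the cokernel is free of rank $2b$. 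For $j=2$ the map $H^2(\Delta,\mathbb{Z})\to H^4(X,\mathbb{Z})$ is, under Poincaré duality, the pushforward $\iota_{*}\colon H_0(\Delta)\to H_0(X)$ between connected spaces, hence the isomorphism $\mathbb{Z}\to\mathbb{Z}$.

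Feeding this into the sequence degree by degree finishes the proof: one reads off $H^0=\mathbb{Z}$ and $H^1\cong H^1(X,\mathbb{Z})=\mathbb{Z}^{4b}$; since $\iota_{!}^{(1)}$ is injective the connecting map out of $H^2$ vanishes and $H^2\cong\operatorname{coker}\iota_{!}^{(0)}\cong\mathbb{Z}^{4b^2+1}$; since $\iota_{!}^{(2)}$ is injective likewise $H^3\cong\operatorname{coker}\iota_{!}^{(1)}\cong\mathbb{Z}^{2b}$; and the isomorphism at $j=2$ together with $H^3(\Delta,\mathbb{Z})=0$ gives $H^4=0$. All groups being free, \eqref{eq:homology=cohomology} follows from UCT. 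The main obstacle is precisely the integral refinement flagged above: passing from rational to integral injectivity is routine, but one must verify that $\delta$ and the degree-three classes are genuinely primitive over $\mathbb{Z}$, which is where the explicit unit coefficients in the diagonal expansion carry the argument and upgrade Proposition~\ref{prop:poincare-poly} from $\mathbb{Q}$ to $\mathbb{Z}$.
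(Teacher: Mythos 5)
Your proof is correct, but it takes a genuinely different route from the paper's. The paper computes $H^*(\Sigma_b\times\Sigma_b-\Delta,\,\mathbb{Z})$ via the Leray spectral sequence of the Fadell--Neuwirth fibration $f\colon \Sigma_b\times\Sigma_b-\Delta\to\Sigma_b$ given by the first projection: it shows that the monodromy on $R^0f_*\mathbb{Z}$ and $R^1f_*\mathbb{Z}$ is trivial by comparison with the product fibration, proves $R^2f_*\mathbb{Z}=0$ because the fibre is an open surface, and then kills the only possibly non-zero differential $d_2^{0,1}$ by comparing the resulting $H^1$ with the rank-$4b$ abelianization of $\mathsf{P}_2(\Sigma_b)$ recorded in \eqref{eq:1-(co)homology-braid}; degeneration at $E_2$ then gives \eqref{eq:cohomology-conf-2}. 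You instead run the integral Gysin sequence of the smooth divisor $\Delta\subset\Sigma_b\times\Sigma_b$ and reduce everything to showing that the three pushforwards $\iota_!$ are injective with free cokernel, which you verify by explicit primitivity computations: $\delta$ has a unit coefficient on $\gamma\otimes 1$, the classes $(1\otimes\alpha_i)\,\delta=\gamma\otimes\alpha_i+\alpha_i\otimes\gamma$ and $(1\otimes\beta_i)\,\delta=\gamma\otimes\beta_i+\beta_i\otimes\gamma$ visibly span a direct summand of $H^3(\Sigma_b\times\Sigma_b,\,\mathbb{Z})$, and the top Gysin map is Poincar\'e dual to the isomorphism $H_0(\Delta)\to H_0(\Sigma_b\times\Sigma_b)$; these checks are all correct (orientability of the normal bundle is automatic since $\Delta$ is a complex submanifold). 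The trade-off is that the paper's spectral-sequence proof needs the group-theoretic input \eqref{eq:1-(co)homology-braid}, i.e.\ the Gon\c{c}alves--Guaschi presentation, but no cup-product computations, whereas yours is self-contained classical topology --- its only external ingredient is the Milnor--Stasheff formula for the diagonal class already quoted in Remark \ref{rem:class-of-diagonal} --- and it yields, as a by-product, the integral form of the quotient descriptions \eqref{eq:delta-identifications}, which the paper obtains only rationally and then over fields via the Universal Coefficient Theorem. One cosmetic remark: your preliminary paragraph reducing the statement to freeness via UCT and the rational Betti numbers is logically redundant, since the Gysin computation establishes freeness and the ranks simultaneously; the final deduction of \eqref{eq:homology=cohomology} from freeness then coincides with the paper's.
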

\begin{proof}
The projection \eqref{eq:proj-first} onto the first component yields a locally trivial topological fibration $f \colon \Sigma_b \times \Sigma_b - \Delta \to \Sigma_b$ with fibre homeomorphic to $\Sigma_b-\{p_1\}$, see \cite[Theorem 1]{FN62}.
There is a commutative diagram of fibrations
\begin{equation}
\begin{split}
   \xymatrix{
       \Sigma_b \times \Sigma_b - \Delta \ar[r]^-{\iota} \ar[d]_{f} & \Sigma_b \times \Sigma_b \ar[d]^-{\pi_1} \\
 \Sigma_b \ar^{\mathrm{id}}[r]  & \Sigma_b}
     \end{split}
\end{equation}
and moreover
\begin{equation*}
H^q(\Sigma_b-\{p_1\}, \, \mathbb{Z}) \simeq H^q(\Sigma_b, \, \mathbb{Z}), \quad q=0, \, 1.
\end{equation*}
Thus, for every point $x_0 \in \Sigma_b$, the monodromy representations
\begin{equation}
\varrho_q \colon \pi_1(\Sigma_b, \, x_0) \to \mathrm{Aut} \left(H^q( f^{-1}(x_0), \, \mathbb{Z}) \right), \quad q=0, \, 1,
\end{equation}
coincide with the corresponding monodromy representations for the product fibration $\pi_1  \colon \Sigma_b \times \Sigma_b \to \Sigma_b$, and so they are trivial.
This in turn implies that the local systems $R^qf_*\mathbb{Z}$ are isomorphic to the constant sheaves $H^q(\Sigma_b-\{p_1\}, \, \mathbb{Z})$ for $q = 0,\, 1$. Now we claim that
\begin{equation} \label{eq:R2}
R^2f_*\mathbb{Z}=0.
\end{equation}
In fact, by local triviality, there exists a contractible open set $U \subset \Sigma_b$ such that
\begin{equation}
f^{-1}(U)=U \times (\Sigma_b-\{p_1\}),
\end{equation}
and by the K{\"u}nneth formula this yields
\begin{equation}
H^2(f^{-1}(U), \, \mathbb{Z}) \simeq H^2(\Sigma_b-\{p_1\}, \, \mathbb{Z})=0,
\end{equation}
the last equality being a consequence of the fact that $\Sigma_b-\{p_1\}$ is a non-compact, real $2$-manifold, see \cite[Corollary 7.12 p. 346]{Bre93} and \cite[Theorem 5.5.12]{We14}.
Recalling that $R^2f_*\mathbb{Z}$ is the sheaf on $\Sigma_b$ associated to the presheaf $U \mapsto H^2(f^{-1}(U), \, \mathbb{Z})$, we obtain \eqref{eq:R2}, as claimed.

Summarizing the computations above, we see that the $E_2$-page of the Leray spectral sequence for $f\colon \Sigma_b \times \Sigma_b - \Delta \to \Sigma_b$, namely
\begin{equation}
E_2^{p, \, q}=H^p(\Sigma_b, \, R^qf_*\mathbb{Z}) \Rightarrow H^{p+q}(\Sigma_b \times \Sigma_b - \Delta, \, \mathbb{Z}),
\end{equation}
is given by
\begin{center}
\begin{tikzpicture}
  \matrix (m) [matrix of math nodes,
    nodes in empty cells,nodes={minimum width=8ex,
    minimum height=10ex,outer sep=-5pt},
    column sep=1ex,row sep=1ex]{
          q      &      &     &     & \\
          1     &  H^1(\Sigma_b, \, \ZZ) &  H^1(\Sigma_b, \, \ZZ) \otimes H^1(\Sigma_b, \, \ZZ)  & H^2(\Sigma_b, \, \ZZ) \otimes H^1(\Sigma_b, \, \ZZ) & \\
          0     &  H^0(\Sigma_b, \, \ZZ)  & H^1(\Sigma_b, \, \ZZ) &  H^2(\Sigma_b, \, \ZZ)  & \\
    \quad\strut &   0  &  1  &  2  & p \strut \\};
\draw[red, stealth-] (m-3-4)  edge node[above] {$d_2^{0, \, 1}$}  (m-2-2);
\draw[thick] (m-1-1.east) -- (m-4-1.east) ;
\draw[thick] (m-4-1.north) -- (m-4-5.north) ;
\end{tikzpicture}
\end{center}
The only differential that is not automatically zero is $d_2^{0, \, 1} \colon H^1(\Sigma_b, \, \ZZ) \to H^2(\Sigma_b, \, \ZZ)$, so we obtain
\begin{equation} \label{eq:leray}
H^{1}(\Sigma_b \times \Sigma_b - \Delta, \, \mathbb{Z}) \simeq \ker d_2^{0, \, 1} \oplus H^1(\Sigma_b, \, \ZZ) \simeq \ker d_2^{0, \, 1} \oplus \mathbb{Z}^{2b}.
\end{equation}
Comparing \eqref{eq:leray} with  \eqref{eq:1-(co)homology-braid}, we deduce $ \ker d_2^{0, \, 1} \simeq \mathbb{Z}^{2b}$, in other words $d_2^{0, \, 1}$ is the zero map, too. Hence the spectral sequence degenerates at the page $E_2$, and \eqref{eq:cohomology-conf-2} follows.

The isomorphisms in \eqref{eq:homology=cohomology} are now a consequence of the Universal Coefficient Theorem, as stated in \cite[Corollary 5.5.19]{We14}, because all integral cohomology groups of $\Sigma_b \times \Sigma_b - \Delta $ are torsion-free (Proposition \ref{prop:integral-cohomology-conf}).
\end{proof}
The computation of the cohomology groups of $\Sigma_b \times \Sigma_b - \Delta$ over an arbitrary field is now straightforward.

\begin{corollary} \label{cor-cohomology-K-conf}
For \emph{every} field $\mathbb{K}$ and for all $i \in \{0, \ldots, 4\}$, there are isomorphisms
\begin{equation} \label{eq:A}
H^i(\Sigma_b \times \Sigma_b - \Delta, \, \mathbb{K}) \simeq H^i(\Sigma_b \times \Sigma_b - \Delta, \, \mathbb{Z}) \otimes \mathbb{K},
\end{equation}
so that the left-hand side of \eqref{eq:A} can be computed by using \eqref{eq:cohomology-conf-2}.
Moreover, the inclusion map $\iota \colon \Sigma_b \times \Sigma_b - \Delta \to \Sigma_b \times \Sigma_b$ induces isomorphisms
\begin{equation} \label{eq:delta-identifications-K}
\begin{split}
H^1(\Sigma_b \times \Sigma_b - \Delta, \, \mathbb{K}) & \simeq H^1(\Sigma_b \times \Sigma_b, \, \mathbb{K}) \\
H^2(\Sigma_b \times \Sigma_b - \Delta, \, \mathbb{K}) & \simeq H^2(\Sigma_b \times \Sigma_b, \, \mathbb{K})/ \langle \delta \rangle \\
H^3(\Sigma_b \times \Sigma_b - \Delta, \, \mathbb{K}) & \simeq H^3(\Sigma_b \times \Sigma_b, \, \mathbb{K})/\langle (1 \otimes \alpha_i)\delta, \, (1 \otimes \beta_i) \delta \rangle. \\
\end{split}
\end{equation}
Finally, as $\mathbb{K}$-vector spaces we have
\begin{equation} \label{eq:cohom-dual}
\begin{split}
H_i(\Sigma_b \times \Sigma_b - \Delta, \, \mathbb{K})  \simeq  H^i(\Sigma_b \times \Sigma_b - \Delta, \, \mathbb{K})^{\vee}.
\end{split}
\end{equation}
\end{corollary}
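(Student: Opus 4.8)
The plan is to obtain all three statements from the integral computation of Proposition \ref{prop:integral-cohomology-conf}, reducing everything to the Universal Coefficient Theorem except for one genuinely arithmetic point: that the distinguished classes $\delta$ and $(1\otimes\alpha_i)\delta,\,(1\otimes\beta_i)\delta$ continue to behave well over a field of positive characteristic.

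First I would settle \eqref{eq:A} and \eqref{eq:cohom-dual}. By Proposition \ref{prop:integral-cohomology-conf} each $H^i(\Sigma_b\times\Sigma_b-\Delta,\,\ZZ)$ is free, and by \eqref{eq:homology=cohomology} so is each integral homology group; hence, for every field $\mathbb{K}$, the $\mathrm{Tor}$ term in the Universal Coefficient Theorem vanishes and the natural map $H^i(\Sigma_b\times\Sigma_b-\Delta,\,\ZZ)\otimes\mathbb{K}\to H^i(\Sigma_b\times\Sigma_b-\Delta,\,\mathbb{K})$ is an isomorphism, which is \eqref{eq:A}. Statement \eqref{eq:cohom-dual} is the Universal Coefficient Theorem over a field: since $\mathbb{K}$ is a field and the homology is finitely generated, homology and cohomology are naturally dual $\mathbb{K}$-vector spaces, with no correction term.

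The heart of the matter is \eqref{eq:delta-identifications-K}. I would not simply tensor the rational identifications \eqref{eq:delta-identifications}, because a rational isomorphism of free $\ZZ$-modules can fail to remain an isomorphism modulo a prime. Instead I would work integrally through the Gysin (Thom) long exact sequence of the closed, oriented (indeed complex) codimension-two submanifold $\Delta\cong\Sigma_b$ of $X:=\Sigma_b\times\Sigma_b$,
\begin{equation*}
\cdots\longrightarrow H^{i-2}(\Delta,\,\ZZ)\xrightarrow{\;j_!\;}H^i(X,\,\ZZ)\xrightarrow{\;\iota^*\;}H^i(X-\Delta,\,\ZZ)\xrightarrow{\;\partial\;}H^{i-1}(\Delta,\,\ZZ)\longrightarrow\cdots,
\end{equation*}
which is available with integral coefficients precisely because the normal bundle of $\Delta$ is oriented. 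Using the projection formula $j_!(j^*c)=c\cup\delta$ together with the explicit ring structure \eqref{eq:cup-in-tensor} and \eqref{eq:cohomology-product}, I would identify the three Gysin maps that matter: $j_!\colon H^0(\Delta)\to H^2(X)$ sends $1\mapsto\delta$; $j_!\colon H^1(\Delta)\to H^3(X)$ has image exactly $\langle(1\otimes\alpha_i)\delta,\,(1\otimes\beta_i)\delta\rangle$, after noting that $(\alpha_i\otimes1)\delta=(1\otimes\alpha_i)\delta$; and $j_!\colon H^2(\Delta)\to H^4(X)$ carries the fundamental class of $\Delta$ to the generator $\gamma\otimes\gamma$, so it is an isomorphism. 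Feeding these into the exact sequence yields at once the integral isomorphisms $H^1(X)\simeq H^1(X-\Delta)$, $H^2(X)/\langle\delta\rangle\simeq H^2(X-\Delta)$ and $H^3(X)/\langle(1\otimes\alpha_i)\delta,\,(1\otimes\beta_i)\delta\rangle\simeq H^3(X-\Delta)$; their quotients are free because the right-hand groups are free by Proposition \ref{prop:integral-cohomology-conf}. Tensoring with $\mathbb{K}$ and invoking \eqref{eq:A} together with the right-exactness of $-\otimes\mathbb{K}$ then produces \eqref{eq:delta-identifications-K}.

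The step I expect to be delicate is the passage from $\ZZ$ to an arbitrary $\mathbb{K}$, which requires that $\langle\delta\rangle$ and $\langle(1\otimes\alpha_i)\delta,\,(1\otimes\beta_i)\delta\rangle$ be \emph{direct summands} of $H^2(X,\ZZ)$ and $H^3(X,\ZZ)$, so that no spurious torsion appears modulo $p$ and the subspaces keep the expected dimension over every field. I would verify this from the K\"unneth expansion $\delta=1\otimes\gamma+\gamma\otimes1+\sum_i(\alpha_i\otimes\beta_i-\beta_i\otimes\alpha_i)$ and \eqref{eq:cohomology-product}: the coefficient $1$ on $1\otimes\gamma$ shows that $\delta$ is primitive, while the computation $(1\otimes\alpha_i)\delta=\gamma\otimes\alpha_i-\alpha_i\otimes\gamma$ (and likewise for $\beta_i$) exhibits the $2b$ classes as differences of distinct members of the integral basis of $H^3(X,\ZZ)$ listed in \eqref{eq:cohomology-product}, hence as a basis of a direct summand; this is exactly the kind of non-degeneracy over every $\mathbb{K}$ reflected in the surjectivity of the cup product recorded in Proposition \ref{prop:cup-product-easy}. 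Finally, for $i=1$ one can sidestep the Gysin sequence altogether: by Remark \ref{rmk:presentation-braid}(iv) the homomorphism $\iota_*$ is onto with kernel the normal closure of $A_{12}$, and abelianizing the first surface relation in \eqref{eq:presentation-0} shows that $A_{12}$ becomes trivial in $H_1(X-\Delta,\ZZ)$, so that $\iota_*$---and dually $\iota^*$---is already an isomorphism over $\ZZ$.
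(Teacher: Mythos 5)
Your proposal is correct, but for the central claim \eqref{eq:delta-identifications-K} it takes a genuinely different route from the paper's. The paper disposes of the whole corollary in three lines: \eqref{eq:A} and \eqref{eq:cohom-dual} come from the Universal Coefficient Theorem (exactly as you argue, using the torsion-freeness established in Proposition \ref{prop:integral-cohomology-conf}), and \eqref{eq:delta-identifications-K} is obtained by applying the same theorem to the rational identifications \eqref{eq:delta-identifications} of Proposition \ref{prop:poincare-poly}. You instead decline to tensor the rational statements---precisely because a rational isomorphism of free $\ZZ$-modules need not survive reduction mod $p$---and re-prove the identifications integrally via the Gysin sequence of the oriented codimension-two submanifold $\Delta \subset \Sigma_b \times \Sigma_b$, identify the relevant Gysin maps through the projection formula, check that $\langle \delta \rangle$ and $\langle (1\otimes\alpha_i)\delta, \, (1\otimes\beta_i)\delta \rangle$ are direct summands, and only then tensor with $\mathbb{K}$. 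This is more work, but it buys something real: your argument is self-contained over $\ZZ$ and settles exactly the mod-$p$ subtlety that the paper's one-line reduction to \eqref{eq:delta-identifications} leaves implicit (naturality of the UCT identifies $\iota^*_{\mathbb{K}}$ with $\iota^*_{\ZZ}\otimes\mathbb{K}$, but computing its kernel and cokernel requires integral, not merely rational, information about $\iota^*_{\ZZ}$, which is what your Gysin argument supplies). Your alternative treatment of degree one, deducing the $H^1$ isomorphism from the abelianization of the surface relation \eqref{eq:presentation-0} via Remark \ref{rmk:presentation-braid}(iv), is also sound and consistent with \eqref{eq:1-(co)homology-braid}. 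One small slip: with the sign convention \eqref{eq:cup-in-tensor} one gets $(1\otimes\alpha_i)\delta = \gamma\otimes\alpha_i + \alpha_i\otimes\gamma$ (a sum, as recorded in Remark \ref{rem:class-of-diagonal}), not a difference; this is harmless, since sums of distinct integral basis vectors are just as primitive as differences, so your direct-summand conclusion stands.
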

\begin{proof}
The isomorphism \eqref{eq:A} comes from the Universal Coefficient Theorem as stated in \cite[Theorems 5.3.9]{We14}. The same theorem, applied to \eqref{eq:delta-identifications}, yields  \eqref{eq:delta-identifications-K}. Finally, \eqref{eq:cohom-dual} is an immediate consequence of  \cite[Theorems 5.5.19]{We14}.
\end{proof}

\begin{remark} \label{rem:class-of-diagonal}
The cohomology class $\delta \in H^2(\Sigma_b \times \Sigma_b, \,  \mathbb{K})$ of the diagonal $\Delta \subset \Sigma_b \times \Sigma_b$ is
\begin{equation}
\delta = \gamma \otimes 1 + 1 \otimes \gamma + \sum_{j=1}^b (\beta_j \otimes \alpha_j - \alpha_j \otimes \beta_j),
\end{equation}
see for instance \cite[Theorem 11.11]{MilSt74}, so we can rewrite the last isomorphism in \eqref{eq:delta-identifications-K} as
\begin{equation}
H^3(\Sigma_b \times \Sigma_b - \Delta, \, \mathbb{K}) \simeq H^3(\Sigma_b \times \Sigma_b, \, \mathbb{K})/\langle \alpha_i \otimes \gamma + \gamma \otimes \alpha_i, \, \beta_i \otimes \gamma + \gamma \otimes \beta_i \rangle.
\end{equation}
\end{remark}
We can also prove the analogue of Proposition \ref{prop:cup-product-easy} for the 2nd configuration space of $\Sigma_b$.
\begin{proposition} \label{prop:cup-product}
For \emph{every} field $\mathbb{K}$, the cup-product pairing
\begin{equation} \label{eq:cup-product}
\eta \colon \wedge^2 H^1(\Sigma_b \times \Sigma_b - \Delta, \, \mathbb{K}) \longrightarrow H^2(\Sigma_b \times \Sigma_b - \Delta, \, \mathbb{K})
\end{equation}
is surjective.
\end{proposition}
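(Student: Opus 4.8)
The plan is to deduce the surjectivity of $\eta$ from the surjectivity of the cup-product pairing $\xi$ on the whole product, which is established in Proposition \ref{prop:cup-product-easy}, by exploiting the naturality of the cup product with respect to the inclusion $\iota \colon \Sigma_b \times \Sigma_b - \Delta \to \Sigma_b \times \Sigma_b$. Since $\iota^*$ is a homomorphism of graded rings, it intertwines the two pairings, so that there is a commutative square
\[
\begin{CD}
\wedge^2 H^1(\Sigma_b \times \Sigma_b, \, \mathbb{K}) @>{\xi}>> H^2(\Sigma_b \times \Sigma_b, \, \mathbb{K}) \\
@V{\wedge^2 \iota^*}VV @VV{\iota^*}V \\
\wedge^2 H^1(\Sigma_b \times \Sigma_b - \Delta, \, \mathbb{K}) @>{\eta}>> H^2(\Sigma_b \times \Sigma_b - \Delta, \, \mathbb{K})
\end{CD}
\]
that is, $\eta \circ (\wedge^2 \iota^*) = \iota^* \circ \xi$.

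Next I would invoke Corollary \ref{cor-cohomology-K-conf}. The first isomorphism in \eqref{eq:delta-identifications-K} says that $\iota^*$ restricts to an isomorphism on $H^1$; hence its second exterior power $\wedge^2 \iota^*$, the left vertical arrow, is itself an isomorphism, in particular surjective. The second isomorphism in \eqref{eq:delta-identifications-K} identifies $\iota^*$ in degree $2$ with the canonical projection $H^2(\Sigma_b \times \Sigma_b, \, \mathbb{K}) \to H^2(\Sigma_b \times \Sigma_b, \, \mathbb{K})/\langle \delta \rangle$, which is surjective as well.

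Finally I would conclude by a short diagram chase. The composite $\iota^* \circ \xi$ is surjective, being the composition of the surjection $\xi$ from Proposition \ref{prop:cup-product-easy} with the surjective degree-$2$ map $\iota^*$ just described. By commutativity of the square this composite equals $\eta \circ (\wedge^2 \iota^*)$, whose image coincides with the image of $\eta$ precisely because $\wedge^2 \iota^*$ is surjective. Therefore $\eta$ is surjective, as claimed. I expect no serious obstacle here: the only points requiring care are verifying that the square genuinely commutes, which is exactly the naturality of the cup product under a continuous map, and checking that the degree-$2$ vertical map is indeed the quotient by $\langle \delta \rangle$ supplied by Corollary \ref{cor-cohomology-K-conf} rather than some other identification.
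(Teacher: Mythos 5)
Your proposal is correct and is essentially identical to the paper's own proof: the same commutative square coming from naturality of the cup product under $\iota^*$, with surjectivity of $\xi$ from Proposition \ref{prop:cup-product-easy}, the left vertical map an isomorphism and the right vertical map a surjection by Corollary \ref{cor-cohomology-K-conf}. The only cosmetic difference is that you spell out the diagram chase (noting that surjectivity of $\wedge^2\iota^*$ lets you identify the two images), which the paper leaves implicit.
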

\begin{proof}
We have a commutative diagram
\begin{equation} \label{dia:cup-product}
\begin{CD}
\wedge^2 H^1(\Sigma_b \times \Sigma_b, \, \mathbb{K})  @> {\xi} >> H^2(\Sigma_b \times \Sigma_b, \, \mathbb{K}) \\
@VV  V  @VV  V\\
\wedge^2 H^1(\Sigma_b \times \Sigma_b - \Delta, \, \mathbb{K}) @> {\eta}>> H^2(\Sigma_b \times \Sigma_b - \Delta, \, \mathbb{K}). \\
\end{CD}
\end{equation}
By Proposition \ref{prop:cup-product-easy}, the map $\xi$ is an epimorphism. Moreover, by Corollary \ref{cor-cohomology-K-conf}, the vertical map on the right is an epimorphism as well, whereas the vertical map on the left is an isomorphism, so the result follows.
\end{proof}

Given a group $\pi$, a $K(\pi, \, 1)$-\emph{space}, also called an \emph{aspherical space}, is a CW-complex $X$ such that $\pi_1(X)=\pi$ and $\pi_i(X)=0$ for all $i >1$. By Whitehead's theorem, this is equivalent to the universal cover of $X$ being contractible, see \cite[Proposition 4.1 p. 342 and Theorem 4.5  p. 346]{Hat02}.

\begin{lemma} \label{lem:kp1}
The $2$\emph{nd} configuration space $\Sigma_b \times \Sigma_b - \Delta$ is a $K(\mathsf{P}_{2}(\Sigma_b), \, 1)$-space.
\end{lemma}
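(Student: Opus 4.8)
The plan is to exploit the Fadell--Neuwirth fibration already used in the proof of Proposition \ref{prop:integral-cohomology-conf}, namely the locally trivial fibration
\[
f \colon \Sigma_b \times \Sigma_b - \Delta \to \Sigma_b
\]
obtained from the projection \eqref{eq:proj-first} onto the first component, whose fibre is homeomorphic to the punctured surface $\Sigma_b - \{p_1\}$ (see \cite[Theorem 1]{FN62}). Since a locally trivial fibration is in particular a Serre fibration, it induces a long exact sequence of homotopy groups, and the asphericity of the total space will follow once we check that both the base and the fibre are aspherical.

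First I would observe that the base $\Sigma_b$ is a closed orientable surface of genus $b \geq 2$, hence a $K(\pi_1(\Sigma_b), \, 1)$; indeed its universal cover is the hyperbolic plane, which is contractible. Next, the fibre $\Sigma_b - \{p_1\}$ is a connected non-compact surface, and removing a point from a closed genus-$b$ surface yields a space homotopy equivalent to a wedge of $2b$ circles (one collapses the interior $2$-cell of the standard CW structure and deformation retracts onto the $1$-skeleton). In particular $\Sigma_b - \{p_1\}$ is a $K(F_{2b}, \, 1)$, where $F_{2b}$ denotes the free group on $2b$ generators, so all its higher homotopy groups vanish.

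With these two facts in hand, I would run the long exact sequence
\[
\cdots \to \pi_n(\Sigma_b - \{p_1\}) \to \pi_n(\Sigma_b \times \Sigma_b - \Delta) \to \pi_n(\Sigma_b) \to \pi_{n-1}(\Sigma_b - \{p_1\}) \to \cdots
\]
For $n = 2$ the outer terms $\pi_2(\Sigma_b - \{p_1\})$ and $\pi_2(\Sigma_b)$ both vanish, forcing $\pi_2(\Sigma_b \times \Sigma_b - \Delta) = 0$; for $n \geq 3$ all three of $\pi_n(\Sigma_b)$, $\pi_n(\Sigma_b - \{p_1\})$ and $\pi_{n-1}(\Sigma_b - \{p_1\})$ vanish, again forcing $\pi_n(\Sigma_b \times \Sigma_b - \Delta) = 0$. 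Thus $\Sigma_b \times \Sigma_b - \Delta$ has trivial homotopy in all degrees $\geq 2$, while by \eqref{eq:iso-braids} its fundamental group is $\mathsf{P}_2(\Sigma_b)$; this is precisely the assertion that it is a $K(\mathsf{P}_2(\Sigma_b), \, 1)$-space.

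There is no genuine obstacle here: the argument is routine once the Fadell--Neuwirth fibration is invoked, and the only points requiring care are the standard asphericity of the closed hyperbolic surface $\Sigma_b$ and of the once-punctured surface $\Sigma_b - \{p_1\}$. An alternative, equally short route would bypass the homotopy long exact sequence entirely and instead verify directly that the universal cover of $\Sigma_b \times \Sigma_b - \Delta$ is contractible, using that it fibres over the contractible universal cover of $\Sigma_b$ with contractible fibres; but the fibration-plus-long-exact-sequence formulation seems cleanest.
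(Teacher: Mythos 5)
Your proof is correct and follows essentially the same route as the paper: the Fadell--Neuwirth fibration $f \colon \Sigma_b \times \Sigma_b - \Delta \to \Sigma_b$ with fibre $\Sigma_b - \{p_1\}$, asphericity of the base (contractible hyperbolic universal cover) and of the fibre (homotopy equivalent to a wedge of $2b$ circles), and the homotopy long exact sequence to kill all higher homotopy groups of the total space. The only difference is cosmetic: you spell out the $n=2$ and $n \geq 3$ cases of the exact sequence explicitly, while the paper states the vanishing in one stroke.
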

\begin{proof}
For all $i>1$, the homotopy long exact sequence for the fibration $f \colon \Sigma_b \times \Sigma_b - \Delta \to \Sigma_b$ gives
\begin{equation*}
\ldots \to \pi_i(\Sigma_b-\{p_1\}) \to \pi_i(\Sigma_b \times \Sigma_b - \Delta)  \to \pi_i(\Sigma_b) \to \ldots
\end{equation*}
Then, if we prove that both $\Sigma_b-\{p_1\}$ and $\Sigma_b$ are aspherical spaces, we are done.
For $\Sigma_b-\{p_1\}$ this follows from the fact that it has the homotopy type of a bouquet of $2b$ copies of $S^1$, and so its higher homotopy groups vanish, whereas for $\Sigma_b$ it is a consequence of the fact that its universal cover is the Poincar{\'e} upper half-space, which is contractible.
\end{proof}

We can now state the following particular case of the topological interpretation of the cohomology groups of a group $\pi$ as the cohomology of an aspherical space with fundamental group $\pi$.
\begin{proposition} \label{prop:maclane}
For every field $\mathbb{K}$ and for all $i \geq 1$, there are isomorphisms
\begin{equation} \label{eq:maclane}
\begin{split}
H^i(\Sigma_b \times \Sigma_b, \, \mathbb{K}) & \simeq H^i(\pi_1(\Sigma_b \times \Sigma_b), \, \mathbb{K}) \\
H^i(\Sigma_b \times \Sigma_b - \Delta, \, \mathbb{K}) & \simeq H^i(\mathsf{P}_{2}(\Sigma_b), \, \mathbb{K}), \\
\end{split}
\end{equation}
where $\mathbb{K}$ is endowed, as an abelian group, with the trivial $\pi_1(\Sigma_b \times \Sigma_b)$-module structure $($respectively, with the trivial $\mathsf{P}_{2}(\Sigma_b)$-module structure$)$.
\end{proposition}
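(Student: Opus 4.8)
The plan is to deduce both isomorphisms from the classical theorem of Eilenberg and MacLane, which states that for any $K(\pi, \, 1)$-space $X$ having the homotopy type of a CW-complex and any abelian group $M$ regarded as a trivial $\pi$-module, the singular cohomology $H^i(X, \, M)$ is naturally isomorphic to the group cohomology $H^i(\pi, \, M)$, see \cite{Hat02, We14}. Taking $M = \mathbb{K}$ with its trivial module structure, the whole proposition reduces to checking that the two spaces on the left-hand sides are aspherical and that each realizes the asserted fundamental group.

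The second isomorphism is then immediate. Indeed, Lemma \ref{lem:kp1} asserts precisely that $\Sigma_b \times \Sigma_b - \Delta$ is a $K(\mathsf{P}_{2}(\Sigma_b), \, 1)$-space, so the Eilenberg--MacLane isomorphism applied to $X = \Sigma_b \times \Sigma_b - \Delta$ and $\pi = \mathsf{P}_{2}(\Sigma_b)$ yields the claim directly.

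For the first isomorphism I would first observe that $\Sigma_b \times \Sigma_b$ is itself aspherical. As already recalled in the proof of Lemma \ref{lem:kp1}, the curve $\Sigma_b$ is a $K(\pi_1(\Sigma_b), \, 1)$-space, since its universal cover is the contractible Poincar\'e upper half-plane. Because homotopy groups commute with finite products, $\pi_i(\Sigma_b \times \Sigma_b) \simeq \pi_i(\Sigma_b) \times \pi_i(\Sigma_b)$ vanishes for every $i > 1$, so $\Sigma_b \times \Sigma_b$ is a $K(\pi_1(\Sigma_b \times \Sigma_b), \, 1)$-space. Applying the Eilenberg--MacLane isomorphism once more gives $H^i(\Sigma_b \times \Sigma_b, \, \mathbb{K}) \simeq H^i(\pi_1(\Sigma_b \times \Sigma_b), \, \mathbb{K})$.

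No genuine difficulty arises here: the statement is a direct application of a standard result, and the only facts requiring a (routine) verification are the asphericity of the two spaces, each of which reduces to the contractibility of a universal cover --- handled by Lemma \ref{lem:kp1} in one case and by the hyperbolic uniformization of $\Sigma_b$ in the other. The sole formal hypothesis one must note is that both spaces, being smooth manifolds, have the homotopy type of CW-complexes, so that the Eilenberg--MacLane theorem indeed applies.
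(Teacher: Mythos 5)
Your proof is correct and follows essentially the same route as the paper: both argue that $\Sigma_b \times \Sigma_b$ and $\Sigma_b \times \Sigma_b - \Delta$ are aspherical (the latter via Lemma \ref{lem:kp1}) and then invoke the Eilenberg--MacLane identification of singular with group cohomology, together with the isomorphism $\mathsf{P}_2(\Sigma_b) \simeq \pi_1(\Sigma_b \times \Sigma_b - \Delta)$. You merely spell out details the paper leaves implicit (asphericity of the product via $\pi_i$ commuting with products, and the CW-homotopy-type hypothesis), which is fine.
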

\begin{proof}
The space $\Sigma_b \times \Sigma_b$ is aspherical and, by Lemma \ref{lem:kp1}, the same is true for
$\Sigma_b \times \Sigma_b - \Delta$. So the claim follows by using \cite[Theorem 11.5 p. 136]{ML95}, together with \eqref{eq:iso-braids} for the second isomorphism.
\end{proof}

\section{Lifting homomorphisms from surface braid groups onto finite Heisenberg groups} \label{sec:rep-braid}

\subsection{Extra-special $p$-groups and finite Heisenberg groups}

The following definition can be found in many textbooks on finite group theory, see for instance \cite[p. 183]{Gor07}, \cite[p. 123]{Is08}. In the sequel, we will use the notation $\mathbb{Z}_p$ for both the group and the field with $p$ elements.

\begin{definition} \label{def:extra-special}
Let $p$ be an odd prime number. A finite $p$-group $G$ is called \emph{extra-special} if its center $Z(G)$ is cyclic of order $p$ and the quotient $V=G/Z(G)$ is a non-trivial, elementary abelian $p$-group.
\end{definition}

An elementary abelian $p$-group is a finite-dimensional vector space over the field $\mathbb{Z}_p$, hence it is of the form $V=(\mathbb{Z}_p)^{\dim V}$ and $G$ fits into a short exact sequence
\begin{equation} \label{eq:extension-extra}
1 \to \mathbb{Z}_p \stackrel{i}{\to} G \stackrel{\pi}{\to} V \to 1.
\end{equation}
Note that, $V$ being abelian, we must have $[G, \, G]=\mathbb{Z}_p$, namely the commutator subgroup of $G$ coincides with its center. Furthermore, since the extension \eqref{eq:extension-extra} is central, it cannot be split, otherwise $G$ would be isomorphic to the direct product of the two abelian groups $\mathbb{Z}_p$ and $V$, which is impossible because $G$ is non-abelian.  As a consequence, the equivalence class of such an extension gives a non-zero element in the second cohomology group  $H^2(V, \, \mathbb{Z}_p)$, where $\mathbb{Z}_p$ has the structure of a trivial $V$-module, see \cite[Sections 9.1--9.3]{Rot02}. We shall now identify this element, following \cite[Section 10]{BC92}.

We first define a function $\epsilon \colon V \to \mathbb{Z}_p$ by taking $\epsilon(v)=w^p$, where $w$ is any element with $\pi(w)=v$. Since commutators are central and of order $p$, we have
\begin{equation}
\epsilon(v_1v_2)=(w_1w_2)^p = w_1^p w_2^p [w_2^{-1}, \, w_1^{-1}]^\frac{p(p-1)}{2}= w_1^p w_2^p = \epsilon(v_1) \epsilon(v_2),
\end{equation}
so that $\epsilon$ is a linear map, namely an element of the dual space $V^{\vee}$.

We now define a bilinear map $\tilde{\omega} \colon V \times V \to \mathbb{Z}_p$ as follows. If $v_1, \, v_2 \in V$, we take any two elements $w_1, \, w_2 \in G$ with $\pi(w_1)=v_1$, $\pi(w_2)=v_2$ and we set
\begin{equation} \label{eq:tilde-omega}
\tilde{\omega}(v_1, \, v_2) =[w_1, \, w_2].
\end{equation}
By definition we have $\tilde{\omega}(v, \, v)=0$, hence $\tilde{\omega}$ is an alternating map, namely an element of the vector space $\mathrm{Alt}^2(V) \simeq \Lambda^2(V^{\vee})$. Assume now that $v \in V$ is such that the linear functional $\tilde{\omega}(v, \, \cdot)$ on $V$ is identically zero, and let $w \in G$ be any element such that $\pi(w)=v$. Then $[w, \, \cdot]$ is identically zero on $G$, in other words $w \in Z(G)= \ker \pi$, and so $v=\pi(w)=0$. Therefore $\tilde{\omega}$ is non-degenerate, so it is a symplectic form on $V$; in particular, this implies that $\dim V$ is even.

The following description of the cohomology algebra $H^*(V, \, \mathbb{Z}_p)$  can be found in \cite[p. 1]{AAG09}.

\begin{proposition} \label{prop:subspace-Heis}
Let $V$ be an elementary abelian $p$-group, and let $\mathbb{Z}_p$ be endowed with the structure of trivial $V$-module. Then there is an isomorphism of graded algebras
\begin{equation} \label{eq:graded-cohomology}
H^*(V, \, \mathbb{Z}_p) \simeq \Lambda(V^{\vee}) \otimes_{\mathbb{Z}_p} S(V^{\vee}),
\end{equation}
where the exterior copy of the dual space $V^{\vee}$ is $H^1(V, \mathbb{Z}_p)$ and the polynomial copy lives in $H^2( V, \, \mathbb{Z}_p);$ specifically, the polynomial copy is the image of the exterior copy under the Bockstein boundary map $\beta \colon H^1(V, \mathbb{Z}_p) \to H^2( V, \, \mathbb{Z}_p)$. In particular, we have
\begin{equation} \label{eq:H2}
H^2(V, \, \mathbb{Z}_p) \simeq \Lambda^2(V^{\vee}) \oplus V^{\vee}
\end{equation}
and, under this identification, the extension class corresponding to the central extension given in \eqref{eq:extension-extra} is $(\tilde{\omega}, \, \epsilon)$.
\end{proposition}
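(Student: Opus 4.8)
The plan is to deduce the graded-algebra isomorphism \eqref{eq:graded-cohomology} from the well-known computation for a single cyclic factor together with the K{\"u}nneth Theorem, and then to extract the extension class by a direct cocycle calculation. First I would recall that for an odd prime $p$ one has $H^*(\mathbb{Z}_p, \, \mathbb{Z}_p) \simeq \Lambda(x) \otimes_{\mathbb{Z}_p} \mathbb{Z}_p[y]$ as graded rings, where $\deg x = 1$, $\deg y = 2$, and $y = \beta(x)$ is the image of $x$ under the Bockstein homomorphism associated with the coefficient sequence $0 \to \mathbb{Z}_p \to \mathbb{Z}_{p^2} \to \mathbb{Z}_p \to 0$. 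Writing $V = (\mathbb{Z}_p)^{\dim V}$ and applying the K{\"u}nneth Theorem with field coefficients (so that there are no $\mathrm{Tor}$ terms and the isomorphism is one of graded rings), the cohomology of $V$ becomes the graded tensor product $\bigotimes_i \big(\Lambda(x_i) \otimes_{\mathbb{Z}_p} \mathbb{Z}_p[y_i]\big)$; collecting the exterior and polynomial generators and using naturality of the Bockstein gives \eqref{eq:graded-cohomology}, with $H^1(V, \mathbb{Z}_p) = V^{\vee}$ the exterior copy and $\beta(V^{\vee}) \subset H^2(V, \mathbb{Z}_p)$ the degree-two polynomial copy. Restricting to degree $2$ yields the splitting \eqref{eq:H2}.

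For the identification of the extension class, I would choose a set-theoretic section $s \colon V \to G$ of $\pi$ with $s(0) = 1$, and form the associated normalized $2$-cocycle $f(v_1, \, v_2) = i^{-1}\big(s(v_1)\, s(v_2)\, s(v_1 + v_2)^{-1}\big) \in \mathbb{Z}_p$, whose class in $H^2(V, \, \mathbb{Z}_p)$ is by definition the extension class of \eqref{eq:extension-extra}. Under \eqref{eq:H2} this class decomposes into an alternating part in $\Lambda^2(V^{\vee})$ and a part in $V^{\vee}$. The alternating part is the antisymmetrization $f(v_1, \, v_2) - f(v_2, \, v_1)$; since $V$ is abelian, $s(v_1)\, s(v_2) = f(v_1, v_2)\, s(v_1 + v_2)$ and $s(v_2)\, s(v_1) = f(v_2, v_1)\, s(v_1 + v_2)$, so this difference equals the commutator $[s(v_1), \, s(v_2)] = \tilde{\omega}(v_1, \, v_2)$, recovering the first component. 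The component in $V^{\vee}$, namely the Bockstein (polynomial) copy, is then detected by the $p$-th power map: computing $s(v)^p$ through the cocycle and using $s(pv) = s(0) = 1$ identifies it with $\epsilon(v) = s(v)^p$, which is exactly the linear functional constructed just before the statement.

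The main obstacle I anticipate is this last bookkeeping step, namely matching the symmetric/power component of the cocycle with the Bockstein copy $\beta(V^{\vee})$ rather than with some other functional, and verifying that the antisymmetric and symmetric contributions split cleanly across \eqref{eq:H2}. This is precisely where the hypothesis that $p$ is odd enters: the central commutators have order $p$, and the binomial factor $\tfrac{p(p-1)}{2}$ that appears when reordering a product of $p$ lifts is divisible by $p$, hence vanishes in $\mathbb{Z}_p$ and makes $\epsilon$ genuinely linear (this is the computation already displayed before the statement). A clean way to finish is by naturality of restriction: pulling the extension back along a line $\langle v \rangle \simeq \mathbb{Z}_p \hookrightarrow V$ reduces the $V^{\vee}$-component to the rank-one case, where $H^2(\mathbb{Z}_p, \, \mathbb{Z}_p) = \langle \beta(x) \rangle$ and the power map $\epsilon$ separates the non-split extensions (such as $\mathbb{Z}_{p^2}$), while restricting along a rank-two subgroup isolates the commutator pairing in $\Lambda^2(V^{\vee})$. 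Combining these two families of restrictions identifies the full extension class as the pair $(\tilde{\omega}, \, \epsilon)$.
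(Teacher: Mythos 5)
Your proposal is essentially correct, but note that the paper itself gives no proof of this proposition: the graded-algebra isomorphism \eqref{eq:graded-cohomology} is quoted from \cite[p. 1]{AAG09}, and the identification of the extension class with $(\tilde{\omega}, \, \epsilon)$ is taken from \cite[Section 10]{BC92}. What you have written is therefore a self-contained reconstruction of the standard argument behind those citations, and both halves are sound: the K{\"u}nneth theorem with field coefficients gives a graded-ring isomorphism $H^*(V,\mathbb{Z}_p)\simeq \bigotimes_i\bigl(\Lambda(x_i)\otimes_{\mathbb{Z}_p}\mathbb{Z}_p[y_i]\bigr)$ with $y_i=\beta(x_i)$ by naturality of the Bockstein, and on the extension side your two cocycle identities are the right ones --- the antisymmetrization $f(v_1,v_2)-f(v_2,v_1)$ equals the commutator pairing $\tilde{\omega}(v_1,v_2)$, and the telescoping identity $s(v)^p=i\bigl(\sum_{k=1}^{p-1}f(kv,\,v)\bigr)$, valid since $s(pv)=s(0)=1$, computes $\epsilon(v)$. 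What each approach buys is clear: the paper gets brevity by outsourcing to the literature, while your argument keeps the whole statement elementary and verifiable within the text.

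The bookkeeping step you flag can be closed by two one-line computations, which also supply the base cases that your restriction-to-subgroups argument secretly needs. On cup-product classes, the cocycle $(x_i\cup x_j)(u,w)=x_i(u)x_j(w)$ antisymmetrizes to $x_i\wedge x_j$, while its power functional is $\sum_{k=1}^{p-1}k\,x_i(v)x_j(v)=\tfrac{p(p-1)}{2}\,x_i(v)x_j(v)=0$ because $p$ is odd. On Bockstein classes, the representing (carrying) cocycle $\beta(x)(u,w)=\tfrac{1}{p}\bigl(\tilde{x}(u)+\tilde{x}(w)-\tilde{x}(u+w)\bigr)$ is symmetric, so it antisymmetrizes to zero, and its power functional telescopes to exactly $x(v)$. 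Hence the pair (antisymmetrization, power map) restricts to the identity on $\Lambda^2(V^{\vee})$ and carries $\beta(x)$ to $x$ on the Bockstein copy, which is precisely the assertion that the class of \eqref{eq:extension-extra} is $(\tilde{\omega},\,\epsilon)$. Your alternative route via restrictions to rank $\leq 2$ subgroups does detect classes faithfully (an alternating form and a linear functional are determined by such restrictions, and the splitting \eqref{eq:H2} is natural because restriction commutes with cup products and Bocksteins), but by itself it only pins down the $V^{\vee}$-component up to a nonzero scalar; normalizing that scalar to $1$ amounts to identifying the class of $0\to\mathbb{Z}_p\to\mathbb{Z}_{p^2}\to\mathbb{Z}_p\to 0$ with $\beta(\mathrm{id})$, which is the same carrying-cocycle computation as above.
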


\begin{remark}
Using coordinates, we can rewrite the isomorphism of graded algebras \eqref{eq:graded-cohomology} as
\begin{equation} \label{eq:graded-cohomology-coordinates}
H^*(V, \, \mathbb{Z}_p) \simeq \Lambda(x_1, \ldots, x_n) \otimes _{\mathbb{Z}_p} \mathbb{Z}_p[y_1, \ldots, y_n],
\end{equation}
where $\deg(x_i)=1$, $\deg(y_i)=2$ and the Bockstein map is given by $\beta(x_i)=y_i$. With this notation, a basis for $H^1(V, \, \mathbb{Z}_p)$ is provided by the $x_i$, whereas a basis for $H^2(V, \, \mathbb{Z}_p)$ is provided by the elements $x_i\wedge x_j$, with $i < j$, and by the $y_k$, cf. \cite[p. 225]{BC92}.
\end{remark}

An immediate consequence of the previous discussion is that an extra-special $p$-group has order $p^{2n+1}$ for some $n \in \mathbb{N}$. A classification of such groups, for every $n$ and $p$, can be found in \cite[Section 5.5]{Gor07}; in particular, it follows that the exponent of an extra-special group is either $p$ or $p^2$. We will only present the important examples provided by finite Heisenberg groups, that belong to the former case; to this purpose, let us first define Heisenberg groups in general.

\begin{definition} \label{def:Heis}
Let $\mathbb{K}$ be a field of characteristic $\neq 2$ and let $(V, \, \omega)$ be a symplectic $\mathbb{K}$-vector space. We define the \emph{Heisenberg group} $\mathsf{Heis}(V, \, \omega)$ as the central extension 
\begin{equation} \label{eq:central}
1 \to \mathbb{K} \to \mathsf{Heis}(V, \, \omega) \to V \to 1
\end{equation}
of the additive group $V$ given as follows: the underlying set of $ \mathsf{Heis}(V, \, \omega)$ is $V \times \mathbb{K}$, endowed with the group law
\begin{equation} \label{eq:group-law-Heis}
(v_1, \, t_1)\,(v_2, \, t_2) = \left(v_1+v_2, \, t_1+t_2 + \frac{1}{2} \omega(v_1, \, v_2)\right).
\end{equation}
\end{definition}
By basic linear algebra, all symplectic forms on a $2n$-dimensional vector space $V$ are equivalent to the standard symplectic form $\omega_{\mathrm{st}}$ on $\mathbb{K}^{2n}$; thus, given two symplectic forms $\omega_1$, $\omega_2$ on $V$, the two Heisenberg groups $\mathsf{Heis}(V, \, \omega_1)$, $\mathsf{Heis}(V, \, \omega_2)$ are isomorphic, because  they are both isomorphic to $\mathsf{Heis}(\mathbb{K}^{2n}, \, \omega_{\mathrm{st}})$.  Moreover, the latter group is in turn isomorphic to the \emph{matrix Heisenberg group} $\mathsf{H}_{2n+1}(\mathbb{K})$, the subgroup of $\mathrm{GL}_{n+2}(\mathbb{K})$ consisting of matrices with $1$ along the diagonal and $0$ elsewhere, except for the top row and rightmost column, namely
\begin{equation} \label{eq:heis-matrices}
\mathsf{H}_{2n+1}(\mathbb{K}) = \left\{
\begin{pmatrix} 1 & \mathbf{x} & z\\
 {}^t\mathbf{0} &  I_n & {}^t\mathbf{y}  \\
 0 & \mathbf{0} & 1 \\
\end{pmatrix} \; \; \bigg|  \; \; \mathbf{x}, \, \mathbf{y} \in \mathbb{K}^n, \, z \in \mathbb{K}
  \right\}.
\end{equation}
An explicit isomorphism  is obtained by sending a matrix as above to the pair
\begin{equation} \label{eq:iso-heisenberg-matric}
(v, \, t)=\left(( \mathbf{x}, \,  \mathbf{y}), \; z-\frac{1}{2} \mathbf{x} \cdot \mathbf{y} \right) \in \mathsf{Heis}(\mathbb{K}^{2n}, \, \omega_{\mathrm{st}}),
\end{equation}
where $( \mathbf{x}, \,  \mathbf{y}) \in \mathbb{K}^n \times \mathbb{K}^n = \mathbb{K}^{2n}$ and $\cdot$ denotes the standard scalar product in $\mathbb{K}^n$. The subscript of $\mathsf{H}_{2n+1}(\mathbb{K})$ stands for its $2n+1$ standard generators, namely those matrices having all the entries not on the diagonal equal to $0$, except one that is equal to $1$. In particular, the center of $\mathsf{H}_{2n+1}(\mathbb{K})$ is the copy of $\mathbb{K}$ generated by the matrix with $ \mathbf{x}= \mathbf{y} = \mathbf{0}$, $z=1$.

\begin{remark} \label{rmk:two-descriptions-heisenberg}
The description of the Heisenberg group as $\mathsf{H}_{2n+1}(\mathbb{K})$ has the advantage of working in every characteristic, but, unlike $\mathsf{Heis}(V, \, \omega)$, it is not coordinate-free. On the other hand, neither $\mathsf{Heis}(V, \, \omega)$ nor the isomorphism \eqref{eq:iso-heisenberg-matric} make sense in characteristic $2$, because of the presence of the coefficient $1/2$. When $\textrm{char}(\mathbb{K})=2$, we might be tempted to define $\mathsf{Heis}(V, \omega)$ by using the group law
\begin{equation} \label{eq:group-law-Heis-char-2}
(v_1, \, t_1)\,(v_2, \, t_2) = \left(v_1+v_2, \, t_1+t_2 + \omega(v_1, \, v_2)\right)
\end{equation}
on $V \times \mathbb{K}$. Although this makes sense, nevertheless the group obtained in this way is \emph{abelian} (and so it is not isomorphic to $\mathsf{H}_{2n+1}(\mathbb{K})$). In particular, it is not suitable for the construction of  Kodaira fibrations as finite Heisenberg covers of $\Sigma_b \times \Sigma_b$ that we will present in Section \ref{sec:Kodaira}, see Remark \ref{rmk:non-abelian-G}.
\end{remark}

Let us now show that, if $\mathbb{K}= \mathbb{Z}_p$, then $\mathsf{Heis}(V, \, \omega)$ is a finite $p$-group, which is in fact an extra-special one.

\begin{proposition} \label{prop:Heisenberg-is-extra-special}
Let $p$ be an odd prime number and $\mathbb{K}=\mathbb{Z}_p$. If $\dim \, V=2n$, then $\mathsf{Heis}(V, \, \omega)$ is an extra-special $p$-group of order $p^{2n+1}$ and exponent $p$. Moreover, the symplectic form $\tilde{\omega}$ on $V$, defined in \eqref{eq:tilde-omega}, can be naturally identified with $\omega$. Finally,
fixed any $\mathbb{Z}_p$-basis $\{v_1, \ldots, v_{2n}\}$ of $V$, the group $\mathsf{Heis}(V, \, \omega)$ admits the following presentation.\\

\noindent \emph{Generators:}
\begin{equation} \label{eq:gen-heis}
\mathsf{v}_1, \ldots, \mathsf{v}_{2n}, \; \mathsf{z},
\end{equation}
where 
\begin{equation} \label{eq:yet-again-generators}
\mathsf{v}_1=(v_1, \, 0), \ldots, \mathsf{v}_{2n}=(v_{2n}, \, 0), \quad \mathsf{z}=(0, \,1)
\end{equation}
\noindent \emph{Relations:}
\begin{equation} \label{eq:rel-heis}
\begin{split}
\mathsf{v}_1^p & = \ldots =\mathsf{v}_{2n}^p=\mathsf{z}^p=1 \\
[\mathsf{v}_{1}, \, \mathsf{z}]& = \ldots = [\mathsf{v}_{2n}, \, \mathsf{z}]= 1 \\
[\mathsf{v}_i, \, \mathsf{v}_j]& =\mathsf{z}^{\omega(v_i, \, v_j)}
\end{split}
\end{equation}
where $i, \, j \in \{1, \ldots, 2n\}$ and the exponent in $\mathsf{z}^{\omega(v_i, \, v_j)}$ stands for any representative in $\mathbb{Z}$ of $\omega(v_i, \, v_j) \in \mathbb{Z}_p$.

\end{proposition}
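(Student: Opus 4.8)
The plan is to read off all the required properties directly from the group law \eqref{eq:group-law-Heis} and then to deduce the presentation by a counting argument. First I would record the elementary computations on which everything rests. Since $\omega(v,\,v)=0$, an immediate induction on $k$ gives the power formula $(v,\,t)^k=(kv,\,kt)$; in particular $(v,\,t)^{-1}=(-v,\,-t)$ and, as we are working over $\mathbb{Z}_p$, we get $(v,\,t)^p=(0,\,0)$. Hence every non-trivial element has order exactly $p$, so $\mathsf{Heis}(V,\,\omega)$ has exponent $p$, and its order is $|V\times\mathbb{Z}_p|=p^{2n}\cdot p=p^{2n+1}$, so it is a finite $p$-group.

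Next I would compute the centre. Comparing $(v,\,t)(w,\,s)$ with $(w,\,s)(v,\,t)$ via \eqref{eq:group-law-Heis}, the element $(v,\,t)$ is central precisely when $\omega(v,\,w)=-\omega(v,\,w)$ for every $w$, i.e. $2\,\omega(v,\,\cdot)\equiv 0$; as $p$ is odd and $\omega$ is non-degenerate, this forces $v=0$. Thus $Z(\mathsf{Heis}(V,\,\omega))=\{(0,\,t)\mid t\in\mathbb{Z}_p\}$ is cyclic of order $p$, while the quotient is the additive group $V\cong(\mathbb{Z}_p)^{2n}$, which is elementary abelian and non-trivial; by Definition \ref{def:extra-special} the group is extra-special of order $p^{2n+1}$. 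To identify $\tilde{\omega}$ with $\omega$, I would take the lifts $w_i=(v_i,\,0)$ and evaluate $[w_1,\,w_2]=w_1w_2w_1^{-1}w_2^{-1}$ step by step from \eqref{eq:group-law-Heis}: the $V$-components cancel, and repeated use of $\omega(v,\,v)=0$ together with the antisymmetry of $\omega$ yields $[w_1,\,w_2]=(0,\,\omega(v_1,\,v_2))$. Under the central identification $1\mapsto\mathsf{z}=(0,\,1)$ this reads $\tilde{\omega}(v_1,\,v_2)=\omega(v_1,\,v_2)$, exactly as in \eqref{eq:tilde-omega}.

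Finally, for the presentation I would introduce the abstract group $\Gamma$ defined by \eqref{eq:gen-heis}--\eqref{eq:rel-heis} and produce an isomorphism $\Gamma\cong\mathsf{Heis}(V,\,\omega)$. The assignment $\mathsf{v}_i\mapsto(v_i,\,0)$, $\mathsf{z}\mapsto(0,\,1)$ is well defined, since all three families of relations have just been verified in $\mathsf{Heis}(V,\,\omega)$ (the first from the power formula, the second from centrality of $\mathsf{z}$, the third from the commutator computation), and it is surjective because these elements generate $V\times\mathbb{Z}_p$. It then remains to bound $|\Gamma|$ from above by $p^{2n+1}$: the relations $[\mathsf{v}_i,\,\mathsf{z}]=1$ make $\mathsf{z}$ central, while $[\mathsf{v}_i,\,\mathsf{v}_j]=\mathsf{z}^{\omega(v_i,\,v_j)}\in\langle\mathsf{z}\rangle$ permits reordering any product $\mathsf{v}_j\mathsf{v}_i$ at the cost of a central factor, so that, using also $\mathsf{v}_i^p=\mathsf{z}^p=1$, every word can be collected into the normal form $\mathsf{v}_1^{a_1}\cdots\mathsf{v}_{2n}^{a_{2n}}\,\mathsf{z}^{c}$ with exponents in $\{0,\,\ldots,\,p-1\}$. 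This gives $|\Gamma|\le p^{2n+1}$, and combined with the surjection onto a group of order $p^{2n+1}$ it forces the homomorphism to be an isomorphism.

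The main technical point is precisely the collection step: I must check that the listed relations \emph{alone} suffice to bring an arbitrary word into normal form, the subtlety being the bookkeeping of the central corrections $\mathsf{z}^{\pm\omega(v_i,\,v_j)}$ generated by each transposition. Everything else is a direct manipulation of the group law, and once the normal form is established the order count closes the argument.
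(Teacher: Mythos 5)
Your proof is correct and follows essentially the same route as the paper's: the same computation of the centre via $2\,\omega(v,\cdot)\equiv 0$ and non-degeneracy, the same power formula giving exponent $p$ and order $p^{2n+1}$, and the same evaluation of the commutator $[(v_1,0),(v_2,0)]=(0,\omega(v_1,v_2))$ identifying $\tilde{\omega}$ with $\omega$. The only difference is that the paper dismisses the verification of the presentation as ``straightforward,'' whereas you spell out the standard argument (a surjection from the abstractly presented group plus the collection-to-normal-form bound $|\Gamma|\le p^{2n+1}$), which is precisely the intended justification.
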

\begin{proof}
First, we identify the subgroup $\{0\} \times \mathbb{Z}_p$ of $V \times \mathbb{Z}_p$ with $\mathbb{Z}_p$, and we check that it is the center of $\mathsf{Heis}(V, \, \omega)$. In fact, using the group law \eqref{eq:group-law-Heis}, we see that an element $g=(v, \, t) \in \mathsf{Heis}(V, \, \omega)$ is in the center if and only if
\begin{equation*}
\omega(v, \, \cdot)= \omega(\cdot, \, v),
\end{equation*}
that is, if and only if the linear functional $\omega(v, \, \cdot)$ on $V$ is identically zero. This in turn implies $v=0$ because $\omega$ is non-degenerate, and so $g=(0, \, t) \in \mathbb{Z}_p$, as desired.

Next, for every $g=(v, \, t) \in \mathsf{Heis}(V, \, \omega)$, using
the fact that $\omega(v, \, v)=0$, we obtain  $g^p=(pv, \, pt)=(0, \, 0)$, and this shows that $\mathsf{Heis}(V, \, \omega)$ has exponent $p$.

Finally, take the symplectic form $\tilde{\omega}$ on $V$ given by  \eqref{eq:tilde-omega}. Given two elements $v, \, v' \in V$, we can lift them to the elements $x=(v, \, 0)$, $x'=(v', \, 0)$ in $\mathsf{Heis}(V, \, \omega)$, and by definition we obtain
\begin{equation}
\begin{split}
\tilde{\omega}(v, \, v') = [x, \, x'] & = (v, \, 0)\,(v', \, 0)\,(v, \, 0)^{-1}\,(v', \, 0)^{-1} \\
& = (v, \, 0)\,(v', \, 0)\,(-v, \, 0)\,(-v', \, 0), \\
& = \left(v+v', \, \frac{1}{2}\omega(v, \, v') \right) \, \left(-v-v', \, \frac{1}{2}\omega(-v, \, -v') \right) \\
& = \left(v+v'-v-v', \,  \frac{1}{2}\omega(v, \, v')+ \frac{1}{2}\omega(-v, \, -v') +  \frac{1}{2} \omega(v+v', \, -v-v') \right)\\
& = (0, \, \omega(v, \, v')) \in \mathbb{Z}_p,
\end{split}
\end{equation}
so we can naturally identify $\tilde{\omega}$ with $\omega$. Checking that \eqref{eq:rel-heis} is a presentation for $\mathsf{Heis}(V, \, \omega)$ is now straightforward.
\end{proof}

\begin{remark} \label{rmk:heis-2-group}
If $p$ is an odd prime, Proposition \ref{prop:Heisenberg-is-extra-special} and the isomorphism \eqref{eq:iso-heisenberg-matric} show that the group $\mathsf{H}_{2n+1}(\mathbb{Z}_p)$ is an extra-special $p$-group of order $p^{2n+1}$ and exponent $p$. Moreover, when $(V, \, \omega)  = ((\mathbb{Z}_p)^{2n}, \, \omega_{\mathrm{st}})$ and $\{v_1, \ldots, v_{2n}\}$ is the standard basis of $V$, the generators in \eqref{eq:yet-again-generators} are the images of the standard generators of $\mathsf{H}_{2n+1}(\mathbb{Z}_p)$  via the isomorphism given by \eqref{eq:iso-heisenberg-matric}. If instead $p=2$, the group $\mathsf{H}_{2n+1}(\mathbb{Z}_2)$ is of order $2^{2n+1}$, but its exponent is $4$; for instance,  $\mathsf{H}_{3}(\mathbb{Z}_2)$ is isomorphic to the dihedral group $\mathsf{D}_8$ with $8$ elements.
\end{remark}

\begin{remark} \label{rmk:extension-class-is-omega}
If $\mathbb{K}=\mathbb{Z}_p$, the cohomology class in $H^2(V, \, \mathbb{Z}_p)$ that
corresponds to the Heisenberg extension  \eqref{eq:central} is given by the symplectic form $\omega$. In fact,  by construction, the linear functional $\epsilon \colon V \to \mathbb{Z}_p$ is identically zero, because $\mathsf{Heis}(V, \, \omega)$ has exponent $p$. So our extension class coincides with $\tilde{\omega}$ (Proposition \ref{prop:subspace-Heis}), that in turn can be identified with $\omega$ (Proposition \ref{prop:Heisenberg-is-extra-special}). Actually, $\omega \colon V \times V \to \mathbb{Z}_p$ gives an explicit $2$-cocycle corresponding to the extension; in fact, by bilinearity, for all $v_1, \, v_2, \, v_3 \in V$ we have
\begin{equation*}
\omega(v_1, \, v_2+v_3)-\omega(v_1+v_2, \, v_3) + \omega(v_2, \, v_3)-\omega(v_1, v_2)=0,
\end{equation*}
which is precisely the cocycle identity for the trivial $V$-action on $\mathbb{Z}_p$, cf. \cite[Theorem 9.17]{Rot02}.
\end{remark}

\begin{remark} \label{rmk: degenerate 2-form}
The construction of the Heisenberg group given in Definition \ref{def:Heis} can be generalized to the situation where the alternating form $\omega \colon V \times V \to \mathbb{K}$ is degenerate. In fact, denoting by $V_0$ the kernel of $\omega$, we see that the set $V \times \mathbb{K}$, endowed with the operation \eqref{eq:group-law-Heis}, is a group whose center equals $V_0 \times \mathbb{K}$. Denoting this group still by $\mathsf{Heis}(V, \, \omega)$, the central extension \eqref{eq:central} fits into a commutative diagram
\begin{equation} \label{dia:Heis-V-W}
\begin{CD}
1 @>>> \mathbb{Z}_p  @>>> \mathsf{Heis}(V, \, \omega)  @>>> V @>>> 1\\
@.   @| @VVV  @VVV  \\
1 @>>> \mathbb{Z}_p @>>> \mathsf{Heis}(W, \, \omega) @>>> W @>>>  1,
\end{CD}
\end{equation}
 where we set $W=V/V_0$ and we continue to write $\omega$ for the symplectic form induced by $\omega$ on $W$. If $\dim V = 2n$ and $\dim V_0 = 2 n_0$, with $n_0 \leq n$, then  $\mathsf{Heis}(W, \, \omega)$ is a genuine Heisenberg group in the sense of Definition \ref{def:Heis}, in particular it is an extra-special $p$-group of order $p^{2(n-n_0)+1}$ and exponent $p$, see Proposition \ref{prop:Heisenberg-is-extra-special}.  Applying the snake lemma to diagram \eqref{dia:Heis-V-W}, we can check that the group homomorphism $ \mathsf{Heis}(V, \, \omega) \to \mathsf{Heis}(W, \, \omega)$ is surjective, with kernel equal to $V_0$. In fact, at the level of sets it is given by the map  $V \times \mathbb{K} \to W \times \mathbb{K}$, where we have the quotient by $V_0$ on the first component and    the identity on the second one.
\end{remark}

\subsection{Group cohomology and lifting of homomorphisms}

Suppose that we have a short exact sequence of groups 
\begin{equation} \label{eq:SES}
1 \to K \to G \to V \to 1 
\end{equation}
with abelian kernel, and a group homomorphism $\phi \colon V'\to V$. Taking the fibred product construction as in 
\cite[p. 72]{DHW12}, we obtain a commutative diagram with exact rows
 \begin{equation} \label{diag:lifting-splitting}
\begin{split}
\xymatrix{
1 \ar[r] & K \ar[r] \ar@{=}[d] & G'
\ar[r] \ar[d] & V' \ar[r] 
 \ar[d]^{\phi}  & 1\\
1 \ar[r] & K \ar[r]  & G \ar[r] &
V
\ar[r] & 1.   }
\end{split}
\end{equation}
Writing $u \in H^2(V, \, K)$ and $u' \in H^2(V', \, K)$ for the extension class of the bottom and the top row, respectively, one can check that $u'$ is the image of $u$ via the induced map $\phi^* \colon H^2(V, \, K) \to H^2(V', \, K)$, where $K$ is a $V'$-module via $\phi$. From this, we get the following useful cohomological lifting criterion.
\begin{proposition} \label{prop:pull-back-extension}
The group homomorphism  $\phi \colon V' \to V$ admits a lifting $\varphi \colon V' \to G$  if and only if $\phi^*u=0 \in H^2(V', \, K)$.
\end{proposition}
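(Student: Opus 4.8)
The plan is to reduce the existence of the lifting to a splitting problem for the top row of diagram \eqref{diag:lifting-splitting}, and then to apply the standard classification of extensions with abelian kernel by second cohomology.

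First I would spell out the fibred product $G' = \{(g,\,v') \in G \times V' : \pi(g) = \phi(v')\}$, where $\pi \colon G \to V$ is the surjection in \eqref{eq:SES}, together with its two projections $p_1 \colon G' \to G$ and $p_2 \colon G' \to V'$. The key observation is that a lifting $\varphi \colon V' \to G$ of $\phi$ (that is, a homomorphism with $\pi \circ \varphi = \phi$) is the \emph{same datum} as a group-theoretic section $s \colon V' \to G'$ of $p_2$: from $\varphi$ one sets $s(v') = (\varphi(v'),\, v')$, which lands in $G'$ exactly because $\pi(\varphi(v')) = \phi(v')$, and conversely $\varphi = p_1 \circ s$ recovers the lifting; a short check shows these assignments are mutually inverse and preserve the group law. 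This is the conceptual core of the argument, since it converts the lifting question into the question of whether the top row splits.

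Next I would invoke the standard fact from group cohomology that an extension $1 \to K \to G' \to V' \to 1$ with abelian kernel, and with a prescribed $V'$-module structure on $K$, admits a splitting if and only if its characteristic class in $H^2(V',\,K)$ vanishes, the trivial class being the one represented by the semidirect product $K \rtimes V'$. Combining this with the identification $u' = \phi^* u$ recorded in the discussion preceding the statement, a section of $p_2$ exists precisely when $\phi^* u = 0$; together with the equivalence of the previous step, this yields the proposition.

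The point demanding the most care is that $K$ is only assumed \emph{abelian}, not central, so the $V'$-action on $K$ given by conjugation inside $G'$ must be checked to coincide with the $V$-action on $K$ inside $G$ pulled back along $\phi$. This compatibility is exactly what guarantees that the obstruction lives in $H^2(V',\,K)$ for the correct module structure and equals $\phi^* u$; once it is verified, the splitting criterion applies verbatim. The remaining verification, that the lifting-to-section correspondence respects multiplication, is routine but is the other place where one must be attentive.
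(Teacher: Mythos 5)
Your proof is correct and follows essentially the same route as the paper: the paper's own argument also identifies liftings of $\phi$ with splittings of the top row of \eqref{diag:lifting-splitting} via the fibred product, and then applies the vanishing criterion for the extension class $u' = \phi^*u$. The extra care you take in verifying the compatibility of the $V'$-module structure on $K$ and the multiplicativity of the lifting-to-section correspondence is left implicit in the paper (it is part of what the paper summarizes as ``one can check''), but it is exactly the right thing to check.
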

\begin{proof}
By the definition of fibred product, the liftings $\varphi \colon V' \to G$ of $\phi \colon V' \to V$ are in bijective correspondence to the splittings $s \colon V'\to G'$ of the top sequence in  \eqref{diag:lifting-splitting}, and such a sequence splits if and only if the extension class $u'=\phi^*u$ is zero. 
\end{proof}

\subsection{Liftings onto finite Heisenberg groups} \label{subsec:liftings-onto-Heis}

We will now apply the previous results to the following framework. We set
\begin{equation} \label{eq:setting-V}
K=\mathbb{Z}_p, \quad  V=H_1(\Sigma_b \times \Sigma_b - \Delta, \, \mathbb{Z}_p) \simeq (\mathbb{Z}_p)^{4b}
\end{equation}
and we put a symplectic form $\omega$ on $V$. Correspondingly, we can consider the Heisenberg extension \eqref{eq:central}, whose middle term
$\mathsf{Heis}(V, \, \omega)$ is an extra-special $p$-group of order $p^{4b+1}$ (Proposition \ref{prop:Heisenberg-is-extra-special}).
Furthermore, we denote by
\begin{equation*}
\phi \colon \mathsf{P}_2(\Sigma_b) \longrightarrow  V
\end{equation*}
the surjective group homomorphism given by the composition of the reduction mod $p$ map $H_1(\Sigma_b \times \Sigma_b - \Delta, \, \mathbb{Z}) \to V$ with the abelianization map $\mathsf{P}_2(\Sigma_b) \longrightarrow  H_1(\Sigma_b \times \Sigma_b - \Delta, \, \mathbb{Z})$. Using Proposition \ref{prop:pull-back-extension}, we immediately obtain
\begin{corollary} \label{cor:lifting-Heisenberg}
Let us consider the diagram
\begin{equation} \label{diag:lifting-splitting-heisenberg}
\begin{split}
\xymatrix{
  &  & & \mathsf{P}_2(\Sigma_b) \ar[d]^{\phi} \ar@[red]@{-->}[dl]_{\color{red}{\varphi_{\omega}}} & \\
1 \ar[r] & \mathbb{Z}_p \ar[r]  & \mathsf{Heis}(V, \, \omega) \ar[r]  &
V
\ar[r] & 1, }
\end{split}
\end{equation}
and denote by $u \in H^2(V, \, \mathbb{Z}_p)$ the cohomology class corresponding to the bottom Heisenberg extension. Then a lifting $\varphi_{\omega} \colon \mathsf{P}_2(\Sigma_b) \longrightarrow \mathsf{Heis}(V, \, \omega)$ of $\phi \colon \mathsf{P}_2(\Sigma_b) \to V$  exists if and only if  $\phi^*u=0 \in H^2(\mathsf{P}_2(\Sigma_b), \, \mathbb{Z}_p)$.
\end{corollary}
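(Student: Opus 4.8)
The plan is to apply Proposition \ref{prop:pull-back-extension} essentially verbatim, with the substitutions $V' = \mathsf{P}_2(\Sigma_b)$, $V = H_1(\Sigma_b \times \Sigma_b - \Delta, \, \mathbb{Z}_p)$, $K = \mathbb{Z}_p$ and $G = \mathsf{Heis}(V, \, \omega)$, where $\phi \colon \mathsf{P}_2(\Sigma_b) \to V$ is the surjection (reduction mod $p$ composed with abelianization) fixed above. First I would form the fibred product of $\mathsf{Heis}(V, \, \omega) \to V$ along $\phi$, producing the commutative ladder \eqref{diag:lifting-splitting} in which both rows have the same abelian kernel $\mathbb{Z}_p$; by Proposition \ref{prop:pull-back-extension} the liftings $\varphi_{\omega}$ of $\phi$ correspond bijectively to splittings of the top row, and such a splitting exists precisely when the top extension class, which equals $\phi^* u$, vanishes.

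The one point deserving explicit comment is the module structure carried by the kernel. In the general statement of Proposition \ref{prop:pull-back-extension} the coefficient group $K$ becomes a $V'$-module through $\phi$ and the conjugation action of $V$ on $K$. Here, however, the Heisenberg sequence \eqref{eq:central} is a \emph{central} extension, so $V$ acts trivially on $\mathbb{Z}_p$, and consequently $\mathsf{P}_2(\Sigma_b)$ acts trivially on $\mathbb{Z}_p$ via $\phi$. Hence the target of the obstruction is ordinary group cohomology with untwisted coefficients, namely $H^2(\mathsf{P}_2(\Sigma_b), \, \mathbb{Z}_p)$, exactly as in the statement, and $\phi^*$ is the induced map $H^2(V, \, \mathbb{Z}_p) \to H^2(\mathsf{P}_2(\Sigma_b), \, \mathbb{Z}_p)$.

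Putting these together, Proposition \ref{prop:pull-back-extension} yields that $\phi$ lifts to a homomorphism $\varphi_{\omega} \colon \mathsf{P}_2(\Sigma_b) \to \mathsf{Heis}(V, \, \omega)$ if and only if $\phi^* u = 0$ in $H^2(\mathsf{P}_2(\Sigma_b), \, \mathbb{Z}_p)$, where $u$ is the class of the Heisenberg extension; by Remark \ref{rmk:extension-class-is-omega} this class is represented by the $2$-cocycle $\omega$ itself, which makes the criterion concretely checkable later on. I do not expect a genuine obstacle in this corollary: all the real content sits in Proposition \ref{prop:pull-back-extension} (and, upstream, in the pullback-of-extensions formalism), and what remains is pure specialization. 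The only thing I would be careful to verify — and it is immediate from centrality of \eqref{eq:central} — is precisely the triviality of the $\mathsf{P}_2(\Sigma_b)$-action on $\mathbb{Z}_p$, since this is what guarantees that the obstruction lives in $H^2$ with trivial coefficients and thus matches the expression appearing in the later lifting computations.
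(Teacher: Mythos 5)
Your proposal is correct and matches the paper exactly: the paper obtains this corollary immediately by specializing Proposition~\ref{prop:pull-back-extension} to $V' = \mathsf{P}_2(\Sigma_b)$, $K = \mathbb{Z}_p$, $G = \mathsf{Heis}(V, \, \omega)$, just as you do. Your additional remark that centrality of the Heisenberg extension \eqref{eq:central} forces the trivial $\mathsf{P}_2(\Sigma_b)$-module structure on $\mathbb{Z}_p$ is a correct (and worthwhile) clarification of a point the paper leaves implicit.
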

Now we give an interpretation of the cohomological condition $\phi^* u=0$ in terms of the symplectic form $\omega$. Since
\begin{equation*}
V^{\vee} \simeq H^1(\Sigma_b \times \Sigma_b - \Delta, \, \mathbb{Z}_p)\simeq H^1(\Sigma_b \times \Sigma_b, \, \mathbb{Z}_p),
\end{equation*}
see \eqref{eq:delta-identifications-K} and \eqref{eq:cohom-dual}, there is a commutative diagram
\begin{equation} \label{dia:cup-product-1}
\xymatrix{
\mathrm{Alt}^2(V) \simeq \wedge^2 V^{\vee}   \ar@{->}[r]^{\xi} \ar@{->}[rd]_{\eta}
& H^2(\Sigma_b \times \Sigma_b, \, \mathbb{Z}_p)  \ar@{->}[d]
\\
& H^2(\Sigma_b \times \Sigma_b - \Delta, \, \mathbb{Z}_p)}
\end{equation}
where the vertical map is the quotient by the $1$-dimensional vector subspace of $H^2(\Sigma_b \times \Sigma_b, \, \mathbb{Z}_p)$ generated by the class $\delta$ of the diagonal (see again \eqref{eq:delta-identifications-K}), whereas $\eta$ and $\xi$ stand for the cup product maps. Note that here, with slight abuse of notation, we are identifying $\xi$ and its composition with the isomorphism $\wedge^2 V^{\vee} \to \wedge^2 H^1(\Sigma_b \times \Sigma_b, \, \mathbb{Z}_p)$, cf. diagram \eqref{dia:cup-product}.

\begin{proposition} \label{prop:interpretation-obstruction}
The obstruction class $\phi^*u \in H^2(\mathsf{P}_2(\Sigma_b), \, \mathbb{Z}_p)$ can be naturally interpreted as the image $\eta(\omega) \in H^2(\Sigma_b \times \Sigma_b - \Delta, \, \mathbb{Z}_p)$ of the symplectic form $\omega \in\mathrm{Alt}^2(V)$ via the cup-product map $\eta$. 
\end{proposition}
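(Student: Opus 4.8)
The plan is to reduce the statement to a computation in degree one, exploiting that $\phi^*$ is a homomorphism of cohomology rings. By Remark~\ref{rmk:extension-class-is-omega}, under the decomposition $H^2(V, \, \mathbb{Z}_p) \simeq \Lambda^2(V^{\vee}) \oplus V^{\vee}$ of Proposition~\ref{prop:subspace-Heis} the extension class $u$ of the Heisenberg extension coincides with $\omega$ and has \emph{vanishing} component in the symmetric (Bockstein) summand $V^{\vee}$, precisely because $\mathsf{Heis}(V, \, \omega)$ has exponent $p$, so that the linear functional $\epsilon$ is identically zero. Since in the graded-algebra isomorphism of Proposition~\ref{prop:subspace-Heis} the exterior copy $\Lambda(V^{\vee})$ is generated in degree one, writing $\omega = \sum_{i<j} \omega_{ij} \, x_i \wedge x_j$ in terms of a basis $x_1, \ldots, x_{4b}$ of $H^1(V, \, \mathbb{Z}_p) = V^{\vee}$ amounts to the identity $u = \sum_{i<j} \omega_{ij} \, (x_i \cup x_j)$ in the cohomology ring of $V$ (here the cup product and the wedge product agree on one-dimensional classes, both being alternating for $p$ odd). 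I would stress that the exponent-$p$ hypothesis is exactly what places $u$ in the subspace generated by cup products of degree-one classes, and hence in the image of a cup-product pairing.

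Next I would analyse $\phi^*$ on $H^1$. Since $\phi \colon \mathsf{P}_2(\Sigma_b) \to V$ is the composition of the abelianization with the reduction mod $p$, every homomorphism $\mathsf{P}_2(\Sigma_b) \to \mathbb{Z}_p$ factors through $\phi$; hence $\phi^* \colon H^1(V, \, \mathbb{Z}_p) \to H^1(\mathsf{P}_2(\Sigma_b), \, \mathbb{Z}_p)$ is the canonical identification $V^{\vee} \to V^{\vee}$, where on the target we use Proposition~\ref{prop:maclane} together with the identifications $H^1(\mathsf{P}_2(\Sigma_b), \, \mathbb{Z}_p) \simeq H^1(\Sigma_b \times \Sigma_b - \Delta, \, \mathbb{Z}_p) \simeq V^{\vee}$ coming from \eqref{eq:delta-identifications-K} and \eqref{eq:cohom-dual}. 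Under this dictionary $\phi^* x_i$ is simply $x_i$, now viewed as a one-dimensional cohomology class on $\Sigma_b \times \Sigma_b - \Delta$.

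Finally, I would assemble the pieces. Because induced maps in cohomology respect cup products, and because the isomorphism of Proposition~\ref{prop:maclane} is one of graded $\mathbb{Z}_p$-algebras, applying the ring homomorphism $\phi^*$ to $u = \sum_{i<j} \omega_{ij} \, (x_i \cup x_j)$ gives
\begin{equation*}
\phi^* u = \sum_{i<j} \omega_{ij} \, (\phi^* x_i \cup \phi^* x_j) = \sum_{i<j} \omega_{ij} \, (x_i \cup x_j) \in H^2(\Sigma_b \times \Sigma_b - \Delta, \, \mathbb{Z}_p).
\end{equation*}
By the definition of the cup-product map $\eta$ in \eqref{eq:cup-product}, the right-hand side is exactly $\eta\big(\sum_{i<j} \omega_{ij} \, x_i \wedge x_j\big) = \eta(\omega)$, which is the desired identification. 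I expect the main obstacle to lie not in this final computation but in setting up the compatibilities cleanly: one must verify that the three identifications in play --- the splitting of $H^2(V, \, \mathbb{Z}_p)$ in Proposition~\ref{prop:subspace-Heis}, the group-cohomology isomorphism of Proposition~\ref{prop:maclane}, and the commutative triangle \eqref{dia:cup-product-1} relating $\xi$ and $\eta$ --- are all compatible with the cup-product (ring) structure, so that the degree-one computation propagates correctly to degree two. Once this bookkeeping is in place, the equality $\phi^* u = \eta(\omega)$ is forced.
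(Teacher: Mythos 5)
Your proposal is correct and follows essentially the same route as the paper: the paper's proof consists of asserting a commutative diagram linking the splitting $H^2(V,\,\mathbb{Z}_p)\simeq \wedge^2 V^{\vee}\oplus V^{\vee}$ of Proposition \ref{prop:subspace-Heis}, the map $\phi^*$, the isomorphism of Proposition \ref{prop:maclane}, and the cup product $\eta$, and then invoking Remark \ref{rmk:extension-class-is-omega} to identify $u$ with $\omega$. Your explicit degree-one computation (writing $u=\sum_{i<j}\omega_{ij}\,x_i\cup x_j$ using $\epsilon=0$, noting that $\phi^*$ is the canonical identification on $H^1$, and propagating via the ring-homomorphism property) is precisely a verification of the commutativity that the paper leaves implicit, with the added small advantage that by evaluating only at $u$ you never need to check that the Bockstein summand $V^{\vee}$ dies under $\phi^*$.
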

\begin{proof}
There is a commutative diagram
\begin{equation}
\begin{split}
\xymatrix{
\wedge^2V^{\vee} \oplus V^{\vee} \ar[r]^{\simeq} & H^2(V, \, \mathbb{Z}_p) \ar[d] \ar[r]^-{\phi^*} & H^2(\mathsf{P}_2(\Sigma_b), \, \mathbb{Z}_p) \ar[d]^{\simeq} \\
& \mathrm{Alt}^2(V) \simeq \wedge^2V^{\vee} \ar@{->}[r]^-{\eta} & H^2(\Sigma_b \times \Sigma_b - \Delta, \, \mathbb{Z}_p),
}
\end{split}
\end{equation}
where the isomorphism on the left is \eqref{eq:H2}, the vertical map  on the left is the projection onto the first summand and the vertical map on the right is the second isomorphism in \eqref{eq:maclane}. By Remark \ref{rmk:extension-class-is-omega}, the extension class $u \in H^2(V, \, \mathbb{Z}_p)$ can be naturally identified with $\omega \in \mathrm{Alt}^2(V)$, so the claim follows.
\end{proof}

As a consequence, we obtain the following lifting criterion, that can be seen as the main result of this section.

\begin{theorem} \label{thm:interpretation-obstruction}
The following holds.
\begin{itemize}
\item[$\boldsymbol{(1)}$] A lifting $\varphi_{\omega} \colon \mathsf{P}_2(\Sigma_b) \longrightarrow \mathsf{Heis}(V, \, \omega)$ of $\phi \colon \mathsf{P}_2(\Sigma_b) \to V$  exists if and only if $\eta(\omega)=0$.
\item[$\boldsymbol{(2)}$] If $\varphi_{\omega}$ as in $\boldsymbol{(1)}$ exists, then $\varphi_{\omega}(A_{12})$ has order $p$ if and only if $\xi(\omega)$ is a non-zero integer multiple of the diagonal class
$\delta \in H^2(\Sigma_b \times \Sigma_b, \, \mathbb{Z}_p)$. In this case, $\varphi_{\omega}$ is necessarily surjective.
\end{itemize}
\end{theorem}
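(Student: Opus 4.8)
The plan is to read off statement $\boldsymbol{(1)}$ directly from the cohomological apparatus already established, and to analyze $\boldsymbol{(2)}$ by comparing the lifting problem for $\mathsf{P}_2(\Sigma_b)$ with the corresponding lifting problem for the quotient $\pi_1(\Sigma_b \times \Sigma_b)$, the two being intertwined precisely by the generator $A_{12}$. For $\boldsymbol{(1)}$ I would simply combine Corollary \ref{cor:lifting-Heisenberg}, which asserts that a lifting $\varphi_{\omega}$ exists if and only if $\phi^* u = 0$, with Proposition \ref{prop:interpretation-obstruction}, which identifies $\phi^* u$ with $\eta(\omega)$; hence the lifting exists if and only if $\eta(\omega) = 0$.

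For $\boldsymbol{(2)}$, the first step is to locate $\varphi_{\omega}(A_{12})$. By Remark \ref{rmk:presentation-braid}$(iv)$, the inclusion-induced epimorphism $\iota_* \colon \mathsf{P}_2(\Sigma_b) \to \pi_1(\Sigma_b \times \Sigma_b)$ has kernel the normal closure $N$ of $A_{12}$; passing to first homology with $\mathbb{Z}_p$-coefficients and invoking the isomorphism $H_1(\Sigma_b \times \Sigma_b - \Delta, \mathbb{Z}_p) \simeq H_1(\Sigma_b \times \Sigma_b, \mathbb{Z}_p)$ of Corollary \ref{cor-cohomology-K-conf}, one sees that $\phi(A_{12}) = 0$ in $V$. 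Consequently $\varphi_{\omega}(A_{12})$ lies in $\ker(\mathsf{Heis}(V, \omega) \to V) = \mathbb{Z}_p$, the center, so it is either trivial or of order $p$. This value is moreover independent of the chosen lift: any two liftings of $\phi$ differ by a homomorphism $\chi \colon \mathsf{P}_2(\Sigma_b) \to \mathbb{Z}_p$, which factors through $V$ and hence vanishes on $A_{12}$ since $\phi(A_{12}) = 0$. In particular, since $\phi$ kills $N$ (as $V$ is abelian and $\phi(A_{12})=0$), it factors as $\phi = \bar{\phi} \circ \iota_*$ for a well-defined $\bar{\phi} \colon \pi_1(\Sigma_b \times \Sigma_b) \to V$.

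The heart of $\boldsymbol{(2)}$ is the equivalence $\varphi_{\omega}(A_{12}) = 1 \iff \xi(\omega) = 0$. If $\xi(\omega) = 0$, then by the exact analogue of Corollary \ref{cor:lifting-Heisenberg} and Proposition \ref{prop:interpretation-obstruction} for $\pi_1(\Sigma_b \times \Sigma_b)$ (whose second cohomology is $H^2(\Sigma_b \times \Sigma_b, \mathbb{Z}_p)$ by Proposition \ref{prop:maclane}, with obstruction $\bar{\phi}^* u = \xi(\omega)$) the map $\bar{\phi}$ lifts to some $\bar{\varphi}_{\omega}$, and $\varphi_{\omega} := \bar{\varphi}_{\omega} \circ \iota_*$ is a lift of $\phi$ killing $A_{12}$; by the independence just established, every lift then kills $A_{12}$. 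Conversely, if $\varphi_{\omega}(A_{12}) = 1$, then, $\varphi_{\omega}(A_{12})$ being central, $\varphi_{\omega}$ kills every conjugate of $A_{12}$ and hence all of $N$, so it descends to a lift of $\bar{\phi}$ and forces $\xi(\omega) = 0$. Finally, since $\eta = q \circ \xi$ with $q$ the quotient by $\langle \delta \rangle$ (diagram \eqref{dia:cup-product-1}), the standing hypothesis $\eta(\omega) = 0$ already places $\xi(\omega)$ inside $\langle \delta \rangle$; thus the condition $\xi(\omega) \neq 0$ is exactly the statement that $\xi(\omega)$ is a nonzero integer multiple of $\delta$, giving the claimed equivalence. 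For surjectivity, I would note that when $\varphi_{\omega}(A_{12})$ has order $p$ it generates the center $\mathbb{Z}_p$; as $\pi \circ \varphi_{\omega} = \phi$ is onto $V = \mathsf{Heis}(V, \omega)/\mathbb{Z}_p$, the image of $\varphi_{\omega}$ meets every coset of the center and contains the center, hence is all of $\mathsf{Heis}(V, \omega)$.

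I expect the main obstacle to be the bookkeeping in the third paragraph, namely setting up the ``product'' analogue of the obstruction identification and verifying that $\bar{\phi}^* u$ corresponds to $\xi(\omega)$ under $H^2(\pi_1(\Sigma_b \times \Sigma_b), \mathbb{Z}_p) \simeq H^2(\Sigma_b \times \Sigma_b, \mathbb{Z}_p)$, compatibly with the relation $\eta = q \circ \xi$ between the two lifting obstructions. Once this compatibility is in place, every remaining assertion is a formal consequence of the central-extension formalism and of $\phi(A_{12}) = 0$.
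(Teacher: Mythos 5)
Your proposal is correct and follows essentially the same route as the paper's proof: part $\boldsymbol{(1)}$ by combining Corollary \ref{cor:lifting-Heisenberg} with Proposition \ref{prop:interpretation-obstruction}, and part $\boldsymbol{(2)}$ by showing $\phi(A_{12})=0$, factoring $\phi$ through $\iota_* \colon \mathsf{P}_2(\Sigma_b) \to \pi_1(\Sigma_b \times \Sigma_b)$, identifying the obstruction $\bar{\phi}^*u$ with $\xi(\omega)$, using $\eta = q \circ \xi$ to conclude that $\xi(\omega) \in \langle \delta \rangle$, and deducing surjectivity from the image containing the center and surjecting onto $V$. Your explicit observation that $\varphi_{\omega}(A_{12})$ is independent of the chosen lift (since any two lifts differ by a homomorphism into the central $\mathbb{Z}_p$, which kills $A_{12}$) is a welcome refinement: the paper passes over this point when it asserts that the existence of $\bar{\varphi}_{\omega}$ forces the \emph{given} lift $\varphi_{\omega}$ to factor through $\iota_*$.
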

\begin{proof}
Part $\boldsymbol{(1)}$ follows from Corollary \ref{cor:lifting-Heisenberg} and Proposition \ref{prop:interpretation-obstruction}, so it only remains to show part $\boldsymbol{(2)}$. We first observe that we have $\phi(A_{12})=0$: in fact, the element $A_{12}$ is a commutator of $\mathsf{P}_2(\Sigma_b)$, see the fifth relation in \eqref{eq:presentation-1}, and, by construction, $\phi$ factors through the abelianization map of
$\mathsf{P}_2(\Sigma_b)$. By Remark \ref{rmk:presentation-braid} $(iv)$, this implies that $\phi$ factors through the group epimorphism
$\iota_* \colon \mathsf{P}_2(\Sigma_b) \to \pi_1(\Sigma_b \times \Sigma_b)$, that is, there exists a group homomorphism $\bar{\phi} \colon \pi_1(\Sigma_b \times \Sigma_b) \to V$ and a commutative diagram
\begin{equation}
\begin{split}
\xymatrix{
  &  & \pi_1(\Sigma_b \times \Sigma_b)  \ar@{-->}@[red][d]_{\color{red}{\bar{\varphi}_{\omega}}} \ar[dr]_{\bar{\phi}} & \mathsf{P}_2(\Sigma_b) \ar[l]_{\quad \quad \iota_*}  \ar[d]^{\phi} 
  & \\
1 \ar[r] & \mathbb{Z}_p \ar[r]  & \mathsf{Heis}(V, \, \omega) \ar[r]  &
V
\ar[r] & 1. }
\end{split}
\end{equation}
The same argument as in the proofs of Corollary \ref{cor:lifting-Heisenberg} and Proposition \ref{prop:interpretation-obstruction}, together with the first isomorphism in \eqref{eq:maclane}, shows that the pull-back $\bar{\phi}^*u \in H^2(\pi_1(\Sigma_b \times \Sigma_b), \, \mathbb{Z}_p)$  can be naturally interpreted as the image $\xi(\omega) \in H^2(\Sigma_b \times \Sigma_b, \, \mathbb{Z}_p)$ of the symplectic form $\omega \in \mathrm{Alt}^2(V)$ via the cup-product map $\xi$, and that we have $\bar{\phi}^*u =0$ if and only if $\bar{\phi}$ can be lifted to a  group homomorphism $\bar{\varphi}_{\omega} \colon  \pi_1(\Sigma_b \times \Sigma_b) \to \mathsf{Heis}(V, \, \omega)$.

On the other hand, the existence of $\bar{\varphi}_{\omega}$ means that $\varphi_{\omega}  \colon \mathsf{P}_2(\Sigma_b) \longrightarrow \mathsf{Heis}(V, \, \omega)$ factors through $\iota_*$ and, again by Remark \ref{rmk:presentation-braid} $(iv)$, this happens if and only if $\varphi_{\omega}(A_{12})$ is trivial.

Summing up, the element $\varphi_{\omega}(A_{12}) \in \mathsf{Heis}(V, \, \omega)$ is non-trivial (or, equivalently, it has order $p$) if and only if $\xi(\omega) \neq 0$. Since we are assuming $\eta(\omega)=0$, this in turn implies that $\xi(\omega)$ is a non-zero element in the kernel of $H^2(\Sigma_b \times \Sigma_b, \, \mathbb{Z}_p) \to H^2(\Sigma_b \times \Sigma_b - \Delta, \, \mathbb{Z}_p)$, namely a non-zero integer multiple of the diagonal class $\delta$.

Finally, being $A_{12}$ a commutator of $\mathsf{P}_2(\Sigma_b)$  implies that $\varphi_{\omega}(A_{12})$ is a commutator of $\mathsf{Heis}(V, \, \omega)$, that is, an element of the center $\mathbb{Z}_p$. Then, if such an element is non-trivial, the fact that $\phi \colon \mathsf{P}_2(\Sigma_b) \longrightarrow  V$ is surjective implies that the same is true for its lifting $\varphi_{\omega} \colon \mathsf{P}_2(\Sigma_b) \longrightarrow \mathsf{Heis}(V, \, \omega)$.
\end{proof}
We can therefore give the following definition, whose geometrical motivation  will become clear in Section \ref{sec:Kodaira}, see in particular Theorem \ref{thm:double-Kodaira-from-omega}.

\begin{definition} \label{def:heisenberg-type}
Set $V=H_1(\Sigma_b \times \Sigma_b - \Delta, \, \mathbb{Z}_p)$ and let $\omega \in \mathrm{Alt}^2(V)$ be a symplectic form on $V$. We will say that $\omega$ is of \emph{Heisenberg type} if it satisfies one of the following equivalent conditions$:$
\begin{itemize}
\item[$\boldsymbol{(1)}$]there exists a surjective lifting $\varphi_{\omega} \colon \mathsf{P}_2(\Sigma_b) \longrightarrow \mathsf{Heis}(V, \, \omega)$ of $\phi \colon \mathsf{P}_2(\Sigma_b) \longrightarrow  V,$ such that $\varphi_{\omega}(A_{12})$ has order $p;$
\item[$\boldsymbol{(2)}$] we have $\eta(\omega)=0$ and $\xi(\omega) \neq 0;$
\item[$\boldsymbol{(3)}$] $\xi(\omega)$ is a non-zero integer multiple of the diagonal class $\delta \in H^2(\Sigma_b \times \Sigma_b, \, \mathbb{Z}_p)$.
\end{itemize}
\end{definition}

Now we want to provide explicit examples of symplectic forms of Heisenberg type. Notice that, by the surjectivity of $\xi$ and $\eta$ (Propositions \ref{prop:cup-product-easy} and \ref{prop:cup-product}), the total number of alternating forms in $\ker \eta - \ker \xi$ equals
\begin{equation*}
 p^{4b^2-2b-1}- p^{4b^2-2b-2}= p^{4b^2-2b-2}(p-1)>0,
\end{equation*}
hence it only remains to exhibit some of these forms that are non-degenerate. We denote again by $\alpha_1, \ldots, \alpha_b$, $\beta_1, \ldots, \beta_b$ the images in $H^1(\Sigma_b, \, \mathbb{Z}_p)=H^1(\Sigma_b, \, \mathbb{Z}) \otimes \mathbb{Z}_p$ of the elements of the symplectic basis of $H^1(\Sigma_b, \, \mathbb{Z})$ given in \eqref{eq:symplectic}, and we choose for $V$ the ordered basis
\begin{equation} \label{eq:ordered-basis-V}
{r}_{11}, \; {t}_{11}, \ldots, r_{1b}, \; {t}_{1b}, \; \;  {r}_{21}, \; {t}_{21}, \ldots, {r}_{2b}, \; {t}_{2b},
\end{equation}
where, under the isomorphism $V \simeq H_1(\Sigma_b \times \Sigma_b, \, \mathbb{Z}_p)$ induced by the inclusion $\iota \colon \Sigma_b \times \Sigma_b - \Delta \to \Sigma_b \times \Sigma_b$, the elements ${r}_{1j}$, ${t}_{1j}$, ${r}_{2j}$, ${t}_{2j} \in V$ are the  duals of the elements $\alpha_j \otimes 1$, $\beta_j \otimes 1$, $1 \otimes \alpha_j$, $1 \otimes \beta_j \in H^1(\Sigma_b \times \Sigma_b, \, \mathbb{Z}_p) \simeq H^1(\Sigma_b, \, \mathbb{Z}_p) \otimes H^1(\Sigma_b, \, \mathbb{Z}_p)$, respectively.

Now we take non-zero scalars $\lambda_1, \ldots, \lambda_b$, $\mu_1, \ldots, \mu_b \in \mathbb{Z}_p$ such that
\begin{equation} \label{eq:lambda-mu}
\sum_{j=1}^b \lambda_j = \sum_{j=1}^b \mu_j =1,
\end{equation}
and we consider the alternating form $\omega \colon V \times V \to \mathbb{Z}_p$ defined on the elements of the basis of $V$ as follows: for all $j \in \{1, \ldots, b\}$ we set
\begin{equation}
\begin{split} \label{eq:symplectic-Kodaira}
\omega({r}_{1j}, \, {t}_{1j}) = - \omega({t}_{1j}, \, {r}_{1j}) & = \lambda_j  \\
\omega({r}_{2j}, \, {t}_{2j}) = - \omega({t}_{2j}, \, {r}_{2j}) & = \mu_j    \\
\omega({r}_{1j}, \, {t}_{2j}) = - \omega({t}_{2j}, \, \mathsf{r}_{1j}) & = -1  \\
\omega({r}_{2j}, \, {t}_{1j}) = - \omega({t}_{1j}, \, {r}_{2j}) & = -1,
\end{split}
\end{equation}
whereas the remaining values are zero. So, with respect to the ordered basis \eqref{eq:ordered-basis-V}, the matrix representing $\omega$ is
\begin{equation} \label{eq:omega}
\Omega_b=
\begin{pmatrix}
L_b & J_b \\
J_b & M_b
\end{pmatrix} \in \mathrm{Mat}_{4b}(\mathbb{Z}_p),
\end{equation}
where the blocks are the elements of $\mathrm{Mat}_{2b}(\mathbb{Z}_p)$ given by
\begin{equation} \label{eq:L}
L_b=\begin{pmatrix}
\begin{matrix}0 & \lambda_1 \\ - \lambda_1 & 0\end{matrix} & & 0 \\
 & \ddots & \\
0 & & \begin{matrix}0 & \lambda_b \\ - \lambda_b & 0
\end{matrix}
\end{pmatrix}
\end{equation}
\begin{equation} \label{eq:M}
M_b=\begin{pmatrix}
\begin{matrix}0 & \mu_1 \\ - \mu_1 & 0\end{matrix} & & 0 \\
 & \ddots & \\
0 & & \begin{matrix}0 & \mu_b \\ - \mu_b & 0
\end{matrix}
\end{pmatrix}
\end{equation}
\begin{equation} \label{eq:J}
J_b=\begin{pmatrix}
\begin{matrix}
0 & -1 \\ 1 & \; \; 0\end{matrix} & & 0 \\
 & \ddots & \\
0 & &
\begin{matrix}0 & -1 \\ 1 & \; \; 0
\end{matrix}
\end{pmatrix}
\end{equation}
\begin{proposition} \label{prop:omega-Kodaira-type}
If $\lambda_j \mu_j \neq 1$ for all $j \in \{1, \ldots, b\}$, then the alternating form $\omega \in \mathrm{Alt}^2(V)$ defined in $\eqref{eq:symplectic-Kodaira}$ is a symplectic form  of Heisenberg type.
\end{proposition}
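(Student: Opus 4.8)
The statement asserts two things at once — that $\omega$ is \emph{symplectic} (non-degenerate) and that it is of \emph{Heisenberg type} — so the plan is to treat these separately. For the second part I would use the most concrete of the three equivalent conditions of Definition~\ref{def:heisenberg-type}, namely $\boldsymbol{(3)}$: it suffices to show that $\xi(\omega)$ is a non-zero multiple of the diagonal class $\delta$, since the equivalence with $\boldsymbol{(1)}$ and $\boldsymbol{(2)}$ is already granted by Theorem~\ref{thm:interpretation-obstruction}.

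\emph{Non-degeneracy.} I would first reorder the basis \eqref{eq:ordered-basis-V} so as to group, for each $j$, the four vectors $r_{1j}, t_{1j}, r_{2j}, t_{2j}$ consecutively. This replaces the Gram matrix $\Omega_b$ by $P^{\mathsf{t}}\Omega_b P$ for a permutation matrix $P$, hence leaves $\det \Omega_b$ unchanged. After reordering, the blocks $L_b, M_b, J_b$ of \eqref{eq:omega}--\eqref{eq:J} reassemble into a block-diagonal matrix with $b$ antisymmetric $4\times 4$ blocks
\begin{equation*}
A_j = \begin{pmatrix} 0 & \lambda_j & 0 & -1 \\ -\lambda_j & 0 & 1 & 0 \\ 0 & -1 & 0 & \mu_j \\ 1 & 0 & -\mu_j & 0 \end{pmatrix}.
\end{equation*}
The Pfaffian of such a block is $\mathrm{Pf}(A_j) = \lambda_j \mu_j - 1$, so $\det A_j = (\lambda_j \mu_j - 1)^2$ and therefore $\det \Omega_b = \prod_{j=1}^b (\lambda_j \mu_j - 1)^2$. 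This is non-zero in $\mathbb{Z}_p$ exactly when $\lambda_j \mu_j \neq 1$ for every $j$, which is the hypothesis; hence $\omega$ is non-degenerate, i.e. symplectic.

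\emph{Heisenberg type.} Under the identification $V^{\vee} \simeq H^1(\Sigma_b \times \Sigma_b, \mathbb{Z}_p)$, the dual basis of \eqref{eq:ordered-basis-V} is $\alpha_j \otimes 1, \beta_j \otimes 1, 1 \otimes \alpha_j, 1 \otimes \beta_j$, so reading off the values \eqref{eq:symplectic-Kodaira} gives, in $\wedge^2 V^{\vee}$,
\begin{equation*}
\omega = \sum_{j=1}^b \Big( \lambda_j (\alpha_j \otimes 1)\wedge(\beta_j \otimes 1) + \mu_j (1\otimes\alpha_j)\wedge(1\otimes\beta_j) - (\alpha_j \otimes 1)\wedge(1\otimes\beta_j) + (\beta_j \otimes 1)\wedge(1\otimes\alpha_j) \Big),
\end{equation*}
where the last coefficient is $+1$ because $\omega(t_{1j}, r_{2j}) = -\omega(r_{2j}, t_{1j}) = 1$. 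I would then apply the cup-product rule \eqref{eq:cup-in-tensor} term by term, using $\alpha_j\beta_j = \gamma$: the first two summands yield $\lambda_j(\gamma\otimes 1)$ and $\mu_j(1\otimes\gamma)$, while the two mixed summands yield $-\alpha_j\otimes\beta_j$ and $+\beta_j\otimes\alpha_j$. Summing over $j$ and invoking $\sum_j \lambda_j = \sum_j \mu_j = 1$ from \eqref{eq:lambda-mu}, I obtain
\begin{equation*}
\xi(\omega) = \gamma\otimes 1 + 1\otimes\gamma + \sum_{j=1}^b(\beta_j\otimes\alpha_j - \alpha_j\otimes\beta_j),
\end{equation*}
which is precisely the class $\delta$ recorded in Remark~\ref{rem:class-of-diagonal}. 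Thus $\xi(\omega)=\delta\neq 0$ is a non-zero multiple of $\delta$, so condition $\boldsymbol{(3)}$ of Definition~\ref{def:heisenberg-type} holds (equivalently $\eta(\omega)=0$, since $\delta$ maps to $0$ in $H^2(\Sigma_b\times\Sigma_b-\Delta,\mathbb{Z}_p)$ by \eqref{eq:delta-identifications-K}). Together with the non-degeneracy established above, this shows that $\omega$ is a symplectic form of Heisenberg type.

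The computation of $\xi(\omega)$ is the crux of the argument. The points demanding care are the sign conventions: first, the antisymmetry of $\omega$ when reading off its coefficients in the ordered basis — this is what produces the relative minus sign between the two mixed contributions, hence the antisymmetric part $\sum_j(\beta_j\otimes\alpha_j-\alpha_j\otimes\beta_j)$ of $\delta$ — and second, the Koszul signs $(-1)^{\deg y\,\deg z}$ in the graded-commutative product \eqref{eq:cup-in-tensor}. Once these are handled correctly, the four terms attached to each index $j$ assemble, after the normalization \eqref{eq:lambda-mu}, exactly into the expression for $\delta$. The determinant computation, by contrast, factors completely over $j$ and is routine once the basis is regrouped, so I expect the sign bookkeeping in the cup-product step to be the only genuine subtlety.
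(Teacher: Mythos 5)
Your proposal is correct and takes essentially the same approach as the paper: non-degeneracy via the factorization $\det \Omega_b = \prod_{j=1}^b (1-\lambda_j \mu_j)^2$ (the paper invokes Gaussian elimination where you regroup the basis into $4\times 4$ blocks and take Pfaffians — a harmless variation, and your block $A_j$ and its Pfaffian $\lambda_j\mu_j-1$ are correct), and the Heisenberg-type property via the same cup-product computation $\xi(\omega)=\delta$ using \eqref{eq:cup-in-tensor} and the normalization \eqref{eq:lambda-mu}, exactly as in \eqref{eq:xi(omega)}. Your sign bookkeeping also agrees with the paper's: your term $+(\beta_j\otimes 1)\wedge(1\otimes\alpha_j)$ is the paper's $-(1\otimes\alpha_j)\wedge(\beta_j\otimes 1)$, and both produce $+\beta_j\otimes\alpha_j$ under $\xi$.
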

\begin{proof}
Standard Gaussian elimination shows that
\begin{equation} \label{eq:determinant-Omega}
\det \Omega_b = (1-\lambda_1 \mu_1)^2 (1- \lambda_2 \mu_2)^2 \cdots (1-\lambda_b \mu_b)^2,
\end{equation}
so $\omega$ is non-degenerate if and only if $1-\lambda_j \mu_j \neq 0$ for all $j \in \{1, \ldots, b \}$.

It remains to show that $\omega$ is of Heisenberg type. Under the natural duality $\mathrm{Alt}^2(V) \simeq \wedge ^2 V^{\vee}$, the alternating form $\omega$ corresponds to the $2$-form on $V^{\vee}$ written in coordinates as
\begin{equation} \label{eq:2-form-omega}
\begin{split}
& \sum_{j=1}^b  \lambda_j \, (\alpha_j \otimes 1) \wedge (\beta_j \otimes 1) + \sum_{j=1}^b \mu_j \, (1\otimes \alpha_j) \wedge (1\otimes \beta_j) \\
- & \sum_{j=1}^b  (\alpha_j \otimes 1) \wedge (1 \otimes \beta_j) - \sum_{j=1}^b (1\otimes \alpha_j) \wedge (\beta_j \otimes 1),
\end{split}
\end{equation}
whose image via $\xi \colon \wedge^2 V^{\vee} \to H^2(\Sigma_b \times \Sigma_b, \, \mathbb{Z}_p)$ can be obtained by replacing  the wedge product in \eqref{eq:2-form-omega} with the cup product and using \eqref{eq:cup-in-tensor}. So, making also use of \eqref{eq:lambda-mu}, we get
\begin{equation} \label{eq:xi(omega)}
\begin{split}
\xi(\omega) & =  \sum_{j=1}^b  \lambda_j \, \xi(\alpha_j \otimes 1, \, \beta_j \otimes 1) +  \sum_{j=1}^b \mu_j \, \xi(1 \otimes \alpha_j, \, 1 \otimes \beta_j)  \\ & -  \sum_{j=1}^b \xi(\alpha_j \otimes 1, \,  1 \otimes \beta_j)-  \sum_{j=1}^b \xi(1 \otimes \alpha_j,  \beta_j \otimes 1)  \\
& = \left( \sum_{j=1}^b \lambda_j \right) \gamma \otimes 1 + \left( \sum_{j=1}^b \mu_j \right) 1 \otimes \gamma - \sum_{j=1}^b \alpha_j \otimes \beta_j +\sum_{j=1}^b \beta_j \otimes \alpha_j \\
& =  \gamma \otimes 1 +  1 \otimes \gamma +\sum_{j=1}^b (\beta_j \otimes \alpha_j - \alpha_j \otimes \beta_j) = \delta,
\end{split}
\end{equation}
cf. Remark \ref{rem:class-of-diagonal}. This concludes the proof.
\end{proof}
We can now show the existence of symplectic forms of Heisenberg type.

\begin{proposition}  \label{prop:heis-forma-exist}
Let $b \geq 2$ and set $V=H_1(\Sigma_b \times \Sigma_b - \Delta, \, \mathbb{Z}_p)$. If $p \geq 5$, then $\mathrm{Alt}^2(V)$ contains symplectic forms of Heisenberg type. 
\end{proposition}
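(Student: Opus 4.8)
The plan is to deduce everything from Proposition~\ref{prop:omega-Kodaira-type}, which already manufactures a symplectic form of Heisenberg type out of any choice of nonzero scalars $\lambda_1,\dots,\lambda_b,\mu_1,\dots,\mu_b\in\mathbb{Z}_p$ subject to $\sum_j\lambda_j=\sum_j\mu_j=1$ (as in \eqref{eq:lambda-mu}) and $\lambda_j\mu_j\neq 1$ for all $j$ (the non-degeneracy condition, recall $\det\Omega_b=\prod_j(1-\lambda_j\mu_j)^2$ from \eqref{eq:determinant-Omega}). So the entire content of the proposition collapses to a purely arithmetic existence question: for every $b\geq 2$ and every prime $p\geq 5$, do such scalars exist? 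First I would record that the constraints split into a \emph{global} part, $\sum_j\lambda_j=\sum_j\mu_j=1$, and a \emph{coordinatewise} part, namely that each pair $(\lambda_j,\mu_j)$ lie in the set $S=\{(x,y)\in(\mathbb{Z}_p^{\ast})^2 : xy\neq 1\}$. Thus I am really asking whether the target $(1,1)\in\mathbb{Z}_p\times\mathbb{Z}_p$ can be written as a sum of $b$ elements of $S$.

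The decisive observation is that $S$ is symmetric: if $(x,y)\in S$ then $(-x,-y)\in S$, since $(-x)(-y)=xy\neq 1$ and $-x,-y$ remain nonzero. Consequently any valid tuple for a given $b$ can be enlarged to a valid tuple for $b+2$ by appending a cancelling pair $(c,d),(-c,-d)$ with $(c,d)\in S$ (for instance $(c,d)=(2,2)$, legitimate for $p\geq 5$); this leaves both column sums unchanged and keeps all products off the value $1$. Hence it suffices to settle the two base cases $b=2$ and $b=3$, after which every $b\geq 2$ follows by induction, the parities being covered by $2\to 4\to 6\to\cdots$ and $3\to 5\to 7\to\cdots$.

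For the base cases I would use the uniform diagonal choice $\lambda_j=\mu_j=1/b$, which is defined because $p\nmid b$ for $b\in\{2,3\}$ and $p\geq 5$. Then the sum conditions hold automatically, while $\lambda_j\mu_j=1/b^2$ equals $1$ only if $b^2\equiv 1\pmod p$; for $b\in\{2,3\}$ this would force $p\mid 3$ or $p\mid 8$, both impossible for $p\geq 5$. This computation simultaneously explains why the hypothesis $p\geq 5$ is sharp for this construction: at $p=3$ the diagonal choice degenerates ($b=3$ is not invertible and $1/4\equiv 1$ at $b=2$).

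The step I expect to be the real obstacle is resisting the natural but doomed counting approach. One is tempted to argue that the ``bad'' forms (those with some $\lambda_j\mu_j=1$) cut out a thin locus inside $\mathrm{Alt}^2(V)$ and invoke a union bound; but a direct count only yields a nonempty complement when $p$ is large compared to $b$, which is useless here since the statement fixes $p\geq 5$ and must hold as $b\to\infty$. The key point to emphasize is therefore that the obstruction $\lambda_j\mu_j=1$ is \emph{coordinatewise}, so the problem is genuinely a sumset question in $\mathbb{Z}_p^2$; exploiting the symmetry of $S$ via the cancelling-pair padding reduces all $b$ to the two explicit base cases, bypassing any density estimate.
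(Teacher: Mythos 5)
Your proof is correct, and while it shares the paper's first step, it settles the crucial arithmetic question by a genuinely different argument. Both you and the paper invoke Proposition~\ref{prop:omega-Kodaira-type} to reduce the statement to finding non-zero $\lambda_1,\dots,\lambda_b,\mu_1,\dots,\mu_b\in\mathbb{Z}_p$ with $\sum_j\lambda_j=\sum_j\mu_j=1$ and $\lambda_j\mu_j\neq 1$ for all $j$. From there the paper argues directly for each $b$ by a degrees-of-freedom count: it picks $\lambda_1,\dots,\lambda_{b-1}$ and $\mu_1,\dots,\mu_{b-2}$ essentially freely, lets $\lambda_b$ be forced by the sum condition, and observes that the last two unknowns only have to satisfy $\mu_{b-1}+\mu_b=c$ while avoiding finitely many values, namely $\mu_{b-1}\notin\{\lambda_{b-1}^{-1},\,c-\lambda_b^{-1}\}$ (together with non-vanishing), which is possible once $p\geq 5$. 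You instead run an induction on $b$ with step $2$: the cancelling-pair padding $(2,2),(-2,-2)$ (valid since $4\not\equiv 1$ and $2\not\equiv 0$ for $p\geq 5$) preserves all constraints, so everything reduces to the explicit base cases $b=2,3$, where the diagonal choice $\lambda_j=\mu_j=1/b$ works because $b^2\not\equiv 1\pmod{p}$. The paper's route shows that solutions are plentiful (nearly arbitrary choices of the free parameters succeed) and needs no case split on parity; your route produces completely explicit solutions and, as a bonus, quietly repairs a small imprecision in the paper's ``choose arbitrarily'' (one must also arrange $\lambda_b=1-\sum_{j<b}\lambda_j\neq 0$, which the paper glosses over). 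One caveat: your closing claim that the computation shows $p\geq 5$ is \emph{sharp} is overstated, since the failure of the single diagonal choice at $p=3$ proves nothing about other choices; the actual sharpness statement is Remark~\ref{rmk:3-does-not-work}, where the paper shows that \emph{no} choice of the $\lambda_j,\mu_j$ yields a symplectic form when $p=3$. This does not affect the validity of your proof of the proposition itself.
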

\begin{proof}
By Proposition \ref{prop:omega-Kodaira-type}, it suffices to find non-zero elements $\lambda_1, \ldots, \lambda_b$, $\mu_1, \ldots, \mu_b$ in $\mathbb{Z}_p$ such that 
\begin{equation}
\sum_{j=1}^b \lambda_j = \sum_{j=1}^b \mu_j =1, \quad \lambda_j \mu_j \neq 1 \; \; \textrm{for all } j \in \{1, \ldots, b\}.  
\end{equation}
Choose arbitrarily $\lambda_j$, with $j \in\{1,\ldots,b-1\}$, and $\mu_j$,  with $j \in\{1,\ldots,b-2\}$, such that $\lambda_j\mu_j\ne 1$ for all $j \in\{1,\ldots,b-2\}$. Then $\lambda_b$ is uniquely determined by $\lambda_b=1-\sum_{j=1}^{b-1}\lambda_j$, whereas  $\mu_{b-1}$ and $\mu_b$ are subject to the following conditions:
\begin{itemize}
\item $\mu_{b-1}+\mu_b$ is equal to a constant $c=1-\sum_{j=1}^{b-2}\mu_j$ 
\item $\mu_{b-1}\ne\lambda_{b-1}^{-1}$, $\mu_{b}\ne\lambda_{b}^{-1}$.
\end{itemize}
These requirements are in turn equivalent to $\mu_{b-1}\notin \{\lambda_{b-1}^{-1},\,c-\lambda_b^{-1}\}$. Now, if $p \geq 5$ this can be clearly satisfied, because there are more than two non-zero elements in $\mathbb Z_p$.
\end{proof}

\begin{remark} \label{rmk:3-does-not-work}
If $p=3$, the condition $\lambda_j\mu_j\ne 1$ implies  $\lambda_j=-\mu_j$ and so, if this holds for all $j$, we get 
\begin{equation}
1=\sum_{j=1}^{b}\lambda_j=-\sum_{j=1}^{b}\mu_j=-1, 
\end{equation}
a contradiction. Hence the alternating form $\omega$ defined in \eqref{eq:symplectic-Kodaira} is never symplectic when $p=3$. In fact, the existence of symplectic forms of Heisenberg type in the case $p=3$ is an interesting problem, but we will not develop this point here.   
\end{remark}

\begin{remark}
The notion of alternating form of Heisenberg type (namely, satisfying one of the equivalent conditions of Definition \ref{def:heisenberg-type}) and the study of the corresponding lifting  $\varphi_{\omega} \colon \mathsf{P}_2(\Sigma_b) \to \mathsf{Heis}(V,\, \omega)$ make also sense when $\omega$ is non-symplectic, i.e. degenerate. In this case, by Remark \ref{rmk: degenerate 2-form}, we obtain an epimorphism $\varphi \colon \mathsf{P}_2(\Sigma_b) \to \mathsf{Heis}(W, \, \omega)$, where $W$ is the quotient of $V$ by $V_0=\ker \omega$. This fact will be exploited in Subsection \ref{subsec:degenerate-Heis}, in order to construct what we will call the \emph{degenerate Heisenberg covers} of $\Sigma_b \times \Sigma_b$; in that situation, the primes $p=2, \, 3$ will be also allowed, but we will have to make the further request that $p$ divides $b+1$.  
\end{remark}

We end this section by explicitly describing a lifting $\varphi_{\omega} \colon \mathsf{P}_2(\Sigma_b) \longrightarrow \mathsf{Heis}(V, \, \omega)$ of $\phi \colon \mathsf{P}_2(\Sigma_b) \longrightarrow  V$, in the case where $\omega$ is one of the symplectic forms considered  in Proposition \ref{prop:omega-Kodaira-type}. First of all, $\phi$ can be explicitly described as the group homomorphism defined by
\begin{equation} \label{eq:phi}
\phi(\rho_{1j}) = r_{1j}, \quad \phi(\tau_{1j})= t_{1j}, \quad \phi(\rho_{2j}) = r_{2j}, \quad  \phi(\tau_{2j}) = r_{2j}, \quad \phi(A_{12})=0,
\end{equation}
for all $j \in \{1, \ldots, g \}$. On the other hand, the $4b+1$ elements of $V \times \mathbb{Z}_p$ given by
\begin{equation}
\mathsf{r}_{1j}=(r_{1j}, \, 0),\quad \mathsf{t}_{1j}=(t_{1j}, \, 0), \quad  \mathsf{r}_{2j}=(r_{2j}, \, 0), \quad  \mathsf{t}_{2j}=(t_{2j}, \, 0), \quad \mathsf{z}=(0, \, 1)
\end{equation}
are generators for $\mathsf{Heis}(V, \, \omega)$, subject to the same relations as in \eqref{eq:rel-heis}, cf. the proof of Proposition \ref{prop:Heisenberg-is-extra-special}. Thus, using  \eqref{eq:symplectic-Kodaira}, for all $j, \, k \in \{1, \ldots, b \}$ we obtain
\begin{equation} \label{eq:rel-heis-g}
\begin{split}
\mathsf{r}_{1j}^p & = \mathsf{t}_{1j}^p= \mathsf{r}_{2j}^p = \mathsf{t}_{2j}^p= \mathsf{z}^p=1 \\
[\mathsf{r}_{1j}, \, \z] & = [\mathsf{t}_{1j}, \, \z]= 
[\mathsf{r}_{2j}, \, \z] = [\mathsf{t}_{2j}, \, \z]=1 \\
[\mathsf{r}_{1j}, \, \mathsf{r}_{1k}]& =[\mathsf{t}_{1j}, \, \mathsf{t}_{1k}]=1 \\
[\mathsf{r}_{1j}, \, \mathsf{r}_{2k}]& =[\mathsf{t}_{1j}, \, \mathsf{t}_{2k}]=1 \\
[\mathsf{r}_{2j}, \, \mathsf{r}_{2k}]& =[\mathsf{t}_{2j}, \, \mathsf{t}_{2k}]=1 \\
[\mathsf{r}_{1j}, \,\mathsf{t}_{1 k}]& =\mathsf{z}^{\delta_{jk} \, \lambda_j} \\
[\mathsf{r}_{2j}, \,\mathsf{t}_{2 k}]& =\mathsf{z}^{\delta_{jk} \, \mu_j} \\
[\mathsf{r}_{1j}, \,\mathsf{t}_{2 k}]& = [\mathsf{r}_{2j}, \,\mathsf{t}_{1 k}]=\mathsf{z}^{-\delta_{jk}},
\end{split}
\end{equation}
where $\delta_{jk}$ is the usual Kronecker symbol.

\medskip 
Let us now  define $\varphi_{\omega}$ as
\begin{equation} \label{eq:varphi}
\varphi_{\omega}(\rho_{1j}) = \mathsf{r}_{1j}, \quad \varphi_{\omega}(\tau_{1j})= \mathsf{t}_{1j}, \quad \varphi_{\omega}(\rho_{2j}) = \mathsf{r}_{2j}, \quad  \varphi_{\omega}(\tau_{2j}) = \mathsf{r}_{2j}, \quad \varphi_{\omega}(A_{12})=\mathsf{z}.
\end{equation}
Looking at Theorem \ref{thm:presentation-braid},  it is straightforward to check that $\varphi_{\omega}$ defines a group homomorphism of $\mathsf{P}_2(\Sigma_b)$ onto $\mathsf{Heis}(V, \, \omega)$, providing a lifting of $\phi$ and such that  $\varphi_{\omega}(A_{12})$ has order $p$.

\begin{remark} \label{rmk:lifting}
In the above description of the lifting, the role of conditions \eqref{eq:lambda-mu} is to ensure that the map $\varphi_{\omega}$  is compatible with the two surface relations in the presentation of $\mathsf{P}_2(\Sigma_b)$. For instance, it is straightforward to check that the identities
\begin{equation}
[\mathsf{r}_{1j}^{-1}, \,\mathsf{t}_{1 j}^{-1}]=\mathsf{z}^{\lambda_j}, \quad j\in \{1, \ldots, b\}
\end{equation}
are consequence of  \eqref{eq:rel-heis-g}; thus, considering the image of the first surface relation via $\varphi_{\omega}$, we obtain
\begin{equation}
\begin{split}
& [\mathsf{r}_{1g}^{-1}, \, \mathsf{t}_{1g}^{-1}] \, \mathsf{t}_{1g}^{-1} \, [\mathsf{r}_{1 \,g-1}^{-1}, \, \mathsf{t}_{1 \,g-1}^{-1}] \, \mathsf{t}_{1\,g-1}^{-1} \cdots [\mathsf{r}_{11}^{-1}, \, \mathsf{t}_{11}^{-1}] \, \mathsf{t}_{11}^{-1} \, (\mathsf{t}_{11} \, \mathsf{t}_{12} \cdots \mathsf{t}_{1g}) \\
= & \mathsf{z}^{\lambda_g} \,  \mathsf{t}_{1g}^{-1} \, \mathsf{z}^{\lambda_{g-1}} \, \mathsf{t}_{1\,g-1}^{-1} \cdots  \mathsf{z}^{\lambda_1} \,  \mathsf{t}_{11}^{-1} \, (\mathsf{t}_{11} \, \mathsf{t}_{12} \cdots \mathsf{t}_{1g}) \\
= & \mathsf{z}^{\lambda_1 + \ldots +\lambda_g} \, (\mathsf{t}_{11} \, \mathsf{t}_{12} \cdots \mathsf{t}_{1g})^{-1} (\mathsf{t}_{11} \, \mathsf{t}_{12} \cdots \mathsf{t}_{1g}) =\mathsf{z},
\end{split}
\end{equation}
that is, the relation is invariant under $\varphi_{\omega}$. The invariance of the second surface relation can be proved analogously.
\end{remark}

\section{Application: construction of double Kodaira fibrations} \label{sec:Kodaira}

\subsection{Double Kodaira fibrations} \label{subsec:Kodaira-fib}

\begin{definition}
A \emph{Kodaira fibration} is a smooth holomorphic fibration $($with connected fibres$)$  $f_1 \colon S \to B_1$, where $S$ is a compact complex surface and $B_1$ is a compact complex curve, which is not isotrivial (this means that not all its fibres are biholomorphic to each others).
\end{definition}
Equivalently, by \cite{FiGr65}, a Kodaira fibration is a smooth connected fibration $f_1 \colon S \to B_1$ which is not a locally trivial holomorphic fibre bundle, cf.\cite[Chapter I, (10.1)]{BHPV03}; however, any such a fibration is a locally trivial differentiable fibre bundle in the category of real $C^{\infty}$ manifolds, because of the Ehresmann theorem \cite{Eh51}. The genus $b_1:=g(B_1)$ is called the \emph{base genus} of the fibration, whereas the genus $g:=g(F)$, where $F$ is any fibre, is called the \emph{fibre genus}. If a surface $S$ is the total space of a Kodaira fibration, we will call it a \emph{Kodaira fibred surface}.

By \cite[Theorem 1.1]{Kas68}, every Kodaira fibration $f_1 \colon S \to B_1$ satisfies $b_1 \geq 2$ and $g \geq 3$. In particular, $S$ contains no rational or elliptic curves: in fact, such curves can neither dominate the base (because $b_1 \geq 2$) nor be contained in fibres (since the fibration is smooth). So every Kodaira fibred surface $S$ is minimal and, by the superadditivity of the Kodaira dimension, it is of general type, hence algebraic.

Important invariants of a Kodaira fibred surface $S$ are the signature $\sigma(S)$ and the slope $\nu(S)$, given by
\begin{equation} \label{eq:slope+signature}
\sigma(S)=\frac{1}{3} \left( c_1^2(S)-2 c_2(S) \right), \quad \nu(S)=\frac{c_1^2(S)}{c_2(S)}.
\end{equation}
As explained in the Introduction (to which we refer the reader for a more detailed discussion on this topic), Kodaira fibrations were originally introduced in \cite{Kod67} and \cite{At69} in order to show that the signature $\sigma$ of a manifold, meaning the signature of the intersection form in the middle cohomology, is not multiplicative for  fibre bundles. In fact, every Kodaira fibred surface $S$ satisfies $\sigma(S) >0$, whereas the signature of the product of two curves is always zero.

It is rather difficult to construct Kodaira fibred surfaces with small $\sigma(S)$. Since $S$ is a differentiable $4$-manifold which is a real surface bundle, its signature is divisible by $4$, see \cite{Mey73}. If moreover $S$ has a spin structure, i.e., its canonical class is $2$-divisible in $\mathrm{Pic}(S)$, then by Rokhlin's theorem its signature is necessarily a positive multiple of $16$, and examples with $\sigma(S)=16$ are constructed in \cite{LLR17}. It is not known if there exists a Kodaira fibred surface such that $\sigma(S) \leq 12$.

The slope can be seen as a quantitative measure of the non-multiplicativity of the signature. In fact, every product surface $F \times B$ satisfies $\nu(F \times B)=2$; on the other hand, if $S$ is a Kodaira fibred surface, then Arakelov inequality (see \cite{Be82}) implies $\nu(S)>2$, while Liu inequality (see \cite{Liu96}) yields $\nu(S)<3$, so that for such a surface the slope lies in the open interval $(2, \, 3)$. The original examples by Atiyah, Hirzebruch and Kodaira have slope lying in $(2, 2 + 1/3]$, see \cite[p. 221]{BHPV03}, and the first examples with higher slope appeared in \cite{CatRol09}, where it is shown that there are Kodaira surfaces with slope equal to $2+2/3$. This is the record for the slope so far, in particular it is a present unknown whether the slope of a Kodaira fibred surface can be arbitrarily close to $3$. 

The examples provided in \cite{CatRol09} are actually rather special cases of Kodaira fibred surfaces, called \emph{double Kodaira surfaces}. 

\begin{definition}
A \emph{double Kodaira surface} is a compact complex surface $S$, endowed with a \emph{double Kodaira fibration}, namely a surjective, holomorphic map $f \colon S \to B_1 \times B_2$ yielding, by composition with the natural projections, two Kodaira fibrations $f_i \colon S \to B_i$, $i=1, \,2$.
\end{definition}

Note that a surface $S$ is a double Kodaira surface if and only if it admits two distinct Kodaira fibrations $f_i \colon S \to B_i$, $i=1, \,2$, since in this case we can take as $f$ the product morphism $f_1 \times f_2$.

\begin{definition} \label{def:simple-very-symple-standard}
Let $f \colon S \to B_1 \times B_2$ be a double Kodaira fibration, and let $D \subset B_1 \times B_2$ be the branch locus of $f$. Then $f$ is called
\begin{itemize}
\item \emph{double {\'e}tale} if $D$ is smooth and the projections of $D$ over $B_1$ and $B_2$  are both {\'e}tale maps;
\item \emph{simple} if it is double {\'e}tale and there exist {\'e}tale maps $\phi_1, \ldots \phi_k$ from $B_1$ to $B_2$ such that $D$ is the disjoint union of their graphs $D_{\phi_1}, \ldots, D_{\phi_k}$;
\item \emph{very simple} if it is simple and moreover $B_1=B_2$ and $\phi_1, \ldots, \phi_k$ are automorphisms;
\item \emph{standard} if there exists {\'e}tale Galois covers $B \to B_i$, $i=1, \, 2$ such that the {\'e}tale pullback
\begin{equation*}
\tilde{S}:= S \times_{(B_1 \times B_2)}(B \times B)
\end{equation*}
induced by $B \times B \to B_1 \times B_2$ is very simple.
\end{itemize}
\end{definition}
Up to now, there are no known example of double {\'e}tale Kodaira fibrations that are not standard.

\subsection{Double Kodaira fibrations via Galois covers of $\Sigma_b \times \Sigma_b$} \label{subsec:double-Kod-via-Galois}

Now our aim is to construct some double Kodaira fibrations by using the techniques developed in the previous sections. The key result in this direction will be the following version of Riemann Extension Theorem, that follows in a straightforward manner from \cite[Corollary 1]{Pol18}.
\begin{proposition} \label{prop:Riemann-ext}
Let $Y$ be a smooth projective variety, $Z \subset Y$ a smooth irreducible divisor and $G$ a finite group. Then the isomorphism classes of connected Galois covers $\mathbf{f} \colon X \to Y$ with Galois group $G$, branched at most over $Z$, are in bijection to group epimorphisms $\varphi \colon \pi_1(Y - Z) \to G$, up to automorphisms of $G$.
\end{proposition}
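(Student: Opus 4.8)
The plan is to factor the desired correspondence through the punctured variety $U := Y - Z$, combining the purely topological Galois theory of covering spaces over $U$ with an extension theorem that reconstructs the branched cover of $Y$ from its restriction to $U$. Thus the bijection will be built as a composite of two bijections: $\{$connected $G$-covers of $Y$ branched at most over $Z\} \leftrightarrow \{$connected $G$-covers of $U\} \leftrightarrow \{$epimorphisms $\varphi\colon\pi_1(U)\to G$ modulo $\mathrm{Aut}(G)\}$.

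First I would treat the unramified side. Since $Y$ is smooth projective and $Z$ a smooth divisor, $U$ is a smooth quasi-projective variety, hence a connected, locally contractible space with finitely generated fundamental group, so the classical covering-space dictionary applies. A connected covering $\mathbf{f}^{\circ}\colon X^{\circ}\to U$ is Galois precisely when the subgroup $\mathbf{f}^{\circ}_*\pi_1(X^{\circ})\trianglelefteq\pi_1(U)$ is normal, in which case the deck group is the quotient $\pi_1(U)/\mathbf{f}^{\circ}_*\pi_1(X^{\circ})$. Giving an epimorphism $\varphi\colon\pi_1(U)\to G$ amounts to giving a normal subgroup $N=\ker\varphi$ together with an identification $\pi_1(U)/N\cong G$; the associated regular $G$-cover is connected exactly because $\varphi$ is surjective. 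Two epimorphisms have the same kernel if and only if they differ by post-composition with an element of $\mathrm{Aut}(G)$, so epimorphisms modulo $\mathrm{Aut}(G)$ are in bijection with isomorphism classes of connected Galois covers of $U$ whose deck group is abstractly isomorphic to $G$ (isomorphism being taken over $U$, without fixing the deck group).

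Second I would pass from $U$ to $Y$. The key input here is the Riemann/Grauert--Remmert extension theorem: because $Y$ is normal and $Z$ is smooth, every finite connected \'etale cover $X^{\circ}\to U$ extends uniquely to a finite cover $\mathbf{f}\colon X\to Y$ with $X$ normal, branched at most along $Z$, realized as the normalization of $Y$ in the function field $\mathbb{C}(X^{\circ})$. Conversely, any such $\mathbf{f}$ restricts to a finite \'etale cover over $U$, and by Zariski--Nagata purity its branch locus is a divisor contained in $Z$. Uniqueness and functoriality of the extension show that the $G$-action propagates from $X^{\circ}$ to $X$, that $X\mapsto X|_U$ and its inverse are mutually inverse on isomorphism classes, and that connectedness and the Galois property are preserved. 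This is precisely the content packaged by \cite[Corollary 1]{Pol18}, so in the write-up I would simply specialize that statement to the case of an \emph{irreducible} branch divisor.

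Composing the two bijections gives the assertion. The one step requiring genuine care --- and the only place where the smoothness hypotheses on $Y$ and $Z$ are used --- is the extension step: one needs both that unramified covers extend (Grauert--Remmert, with GAGA to stay in the algebraic category) and that branched covers restrict to unramified ones off $Z$ (purity of the branch locus). Everything else is the formal covering-space dictionary, which is why the result follows in a straightforward manner once \cite[Corollary 1]{Pol18} is available; the remaining task is only to observe that the irreducibility of $Z$ removes any local monodromy compatibility conditions that a reducible branch divisor would otherwise impose, leaving bare surjectivity of $\varphi$ as the sole requirement.
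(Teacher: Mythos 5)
Your proposal is correct and follows essentially the same route as the paper, which gives no argument of its own beyond invoking \cite[Corollary 1]{Pol18}: you simply unpack the standard content behind that citation (the covering-space dictionary over $Y-Z$ combined with the Grauert--Remmert/Riemann extension step and purity of the branch locus). The only quibble is your closing remark: it is the ``branched \emph{at most} over $Z$'' convention, not the irreducibility of $Z$, that makes bare surjectivity of $\varphi$ the sole requirement --- local monodromy conditions would enter only if one prescribed the branching orders along the components.
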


As an important consequence, we obtain the following recipe to construct double Kodaira fibrations. With a slight abuse of notation, in the sequel we will use the symbol $\Sigma_b$ to indicate both a smooth complex projective curve of genus $b$ and its underlying real surface.

\begin{theorem} \label{thm:double-Kodaira-from-G}
Let $G$ be a finite group such that there is a group epimorphism $\varphi \colon \mathsf{P}_2(\Sigma_b) \to G,$ and denote by
\begin{equation}
\varphi_1 \colon \pi_1(\Sigma_b - \{p_1\}, \, p_2) \to G, \quad \varphi_2 \colon \pi_1(\Sigma_b - \{p_2\}, \, p_1) \to G
\end{equation}
the compositions of $\varphi$ with the monomorphisms $\pi_1(\Sigma_b - \{p_1\}, \, p_2) \to  \mathsf{P}_2(\Sigma_b)$ and $\pi_1(\Sigma_b - \{p_2\}, \, p_1) \to  \mathsf{P}_2(\Sigma_b)$, respectively $($cf. \eqref{eq:split-braid} and \eqref{eq:split-braid-new}$)$. Assume that
\begin{itemize}
\item the element $\varphi(A_{12})$ has order $n>1;$
\item the image of $\varphi_i$ has index $m_i$ in $G,$ for $i=1, \, 2$.
\end{itemize}
Then there is a double Kodaira fibration $f \colon S \to \Sigma_{b_1} \times \Sigma_{b_2},$ where
\begin{equation} \label{eq:expression-gi}
b_1 -1 =m_1(b-1), \quad  b_2 -1 =m_2(b-1).
\end{equation}
Moreover, the fibre genera $g_1$, $g_2$ of the Kodaira fibrations $f_1 \colon S \to \Sigma_{b_1}$, $f_2 \colon S \to \Sigma_{b_2}$ are computed by the formulae
\begin{equation} \label{eq:expression-gFi}
2g_1-2 = \frac{|G|}{m_1} (2b-2 + \mathfrak{n} ), \quad 2g_2-2 = \frac{|G|}{m_2} \left( 2b-2 + \mathfrak{n} \right),
\end{equation}
where $\mathfrak{n}:= 1 - 1/n$.
\end{theorem}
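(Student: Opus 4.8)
The plan is to realize $S$ as the branched cover of $\Sigma_b \times \Sigma_b$ determined by $\varphi$, and then to extract the two Kodaira fibrations from the two projections after Stein factorization. First I would apply the Riemann Extension Theorem in the form of Proposition \ref{prop:Riemann-ext}, taking $Y = \Sigma_b \times \Sigma_b$, $Z = \Delta$ and group $G$: since $\pi_1(\Sigma_b \times \Sigma_b - \Delta) \simeq \mathsf{P}_2(\Sigma_b)$ and $\varphi$ is an epimorphism, this produces a connected Galois cover $\mathbf{f} \colon S \to \Sigma_b \times \Sigma_b$ with group $G$, branched exactly over $\Delta$. The local monodromy around $\Delta$ is $\varphi(A_{12})$, of order $n$, so in local coordinates $(x,y)$ with $\Delta = \{y=0\}$ and the first projection equal to $x$, the cover is modelled on $(x,u) \mapsto (x, u^n)$; in particular $S$ is smooth, and each composite $f_i' := \pi_i \circ \mathbf{f} \colon S \to \Sigma_b$ is in these coordinates the submersion $(x,u) \mapsto x$. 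Hence both $f_i'$ are everywhere-submersive proper holomorphic maps, i.e. smooth fibrations, but possibly with disconnected fibres.

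Next I would perform the Stein factorization $f_1' = h_1 \circ f_1$, with $S \xrightarrow{f_1} B_1 \xrightarrow{h_1} \Sigma_b$, and identify $B_1$. The fibre of $f_1'$ over $q$ is $\mathbf{f}^{-1}(\{q\} \times \Sigma_b)$, a $G$-cover of $\{q\} \times \Sigma_b \simeq \Sigma_b$ branched at the single transverse point $(q,q)$; its monodromy group is the image of $\pi_1(\Sigma_b - \{q\}) \to G$, which by the split sequence \eqref{eq:split-braid} is a conjugate of $\mathrm{Im}\,\varphi_1$. Therefore the number of connected components equals $[G : \mathrm{Im}\,\varphi_1] = m_1$, and this count is independent of $q$ because every vertical curve meets $\Delta$ in one transverse point of the same local type. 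Since the components of $(f_1')^{-1}(q)$ are exactly the points of $h_1^{-1}(q)$, the finite map $h_1$ has constant fibre cardinality $m_1 = \deg h_1$ and is hence étale; Riemann--Hurwitz gives $2b_1 - 2 = m_1(2b-2)$, i.e. $b_1 - 1 = m_1(b-1)$. The identical argument for $\pi_2$, using \eqref{eq:split-braid-new}, yields $b_2 - 1 = m_2(b-1)$. As $f_1$ is obtained from the submersion $f_1'$ by an étale base change, it is again a smooth fibration, now with connected fibres.

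I would then compute the fibre genera. A fibre of $f_1$ is one connected component of $\mathbf{f}^{-1}(\{q\} \times \Sigma_b)$, namely a Galois cover of $\Sigma_b$ with group $\mathrm{Im}\,\varphi_1$ of order $|G|/m_1$, branched over a single point with cyclic monodromy of order $n$; there are $(|G|/m_1)/n$ ramification points, each of index $n$. Riemann--Hurwitz then gives
\[
2g_1 - 2 = \tfrac{|G|}{m_1}(2b-2) + \tfrac{|G|}{m_1}\Bigl(1 - \tfrac{1}{n}\Bigr) = \tfrac{|G|}{m_1}\bigl(2b - 2 + \mathfrak{n}\bigr),
\]
and symmetrically for $g_2$, matching the stated formulae.

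Finally, the product $f = f_1 \times f_2 \colon S \to \Sigma_{b_1} \times \Sigma_{b_2}$ satisfies $(h_1 \times h_2) \circ f = \mathbf{f}$ with $h_1 \times h_2$ étale, so $f$ is a finite surjective morphism; combined with the smoothness and connectedness above, this gives a double fibration as required, once each $f_i$ is known to be a \emph{Kodaira} fibration. The main obstacle is precisely this last point, the non-isotriviality of $f_i$: a smooth fibration of genus $\geq 2$ is Kodaira exactly when it is not a holomorphic fibre bundle. I would argue non-isotriviality by restricting the transverse fibration $f_2$ to the fibres of $f_1$: each such restriction realizes the fibre as a branched cover of $\Sigma_b$ whose single branch point $(q,q)$ moves with $q$. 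Since the isomorphism class of a one-point branched cover of $\Sigma_b$ genuinely varies with the position of the branch point, the fibres of $f_1$ cannot all be biholomorphic, ruling out local triviality; the finiteness of $f = f_1 \times f_2$ ensures that $f_2$ is transverse to $f_1$, so that this restriction is indeed non-constant. Making this variation-of-branch-point argument rigorous — or, alternatively, deducing non-isotriviality from $\sigma(S) > 0$ via the invariant computations carried out later — is the one step requiring real care; everything else is bookkeeping with Stein factorization and Riemann--Hurwitz.
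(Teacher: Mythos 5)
Your proposal is correct and follows essentially the same route as the paper's proof: Proposition \ref{prop:Riemann-ext} produces the connected $G$-cover $\mathbf{f}$ branched exactly over $\Delta$, Stein factorization of $\pi_i \circ \mathbf{f}$ yields \'{e}tale covers $\Sigma_{b_i} \to \Sigma_b$ of degree $m_i$ (the paper justifies the constancy of the number of fibre components by citing Deligne--Mumford on proper normal flat morphisms, where you use the monodromy-conjugacy/Ehresmann argument, both of which work), and Riemann--Hurwitz gives \eqref{eq:expression-gi} and \eqref{eq:expression-gFi} exactly as you compute. The one step you flag as delicate --- non-isotriviality of the two fibrations --- is in fact passed over silently in the paper's proof, so your explicit sketch (moving branch point plus rigidity of maps between curves of general type, or alternatively $\sigma(S)>0$ from the invariant computations, which do not presuppose non-isotriviality) is, if anything, more careful than the original.
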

\begin{proof}
Set $Y= \Sigma_b \times \Sigma_b$ and $Z = \Delta$. Using Proposition \ref{prop:Riemann-ext} and the isomorphism \eqref{eq:iso-braids}, from the existence of the group epimorphism $\varphi \colon \mathsf{P}_2(\Sigma_b) \to G$ we obtain the existence of a Galois cover $\mathbf{f} \colon S \to \Sigma_b \times \Sigma_b$, whose Galois group is isomorphic to $G$. Furthermore, since $\varphi(A_{12})$ has order $n$, part ($v$) of Remark \ref{rmk:presentation-braid} implies that such a cover is branched exactly over $\Delta$, with branching order $n$.  The diagonal $\Delta$ intersects transversally at a single point the fibres of both natural projections $\pi_i \colon \Sigma_b \times \Sigma_b \to \Sigma_b$, and such a point is different for each fibre.  So, taking the Stein factorizations of the compositions $\pi_i \circ \mathbf{f} \colon S \to \Sigma_b$ as in the diagram below
\begin{equation} \label{dia:Stein-Kodaira-gi}
\begin{tikzcd}
  S \ar{rr}{\pi_i \circ \mathbf{f}}  \ar{dr}{f_i} & & \Sigma_b  \\
   & \Sigma_{b_i} \ar{ur} &
  \end{tikzcd}
\end{equation}
we obtain two distinct Kodaira fibrations $f_i \colon S \to \Sigma_{b_i}$, hence a double Kodaira fibration by considering the product morphism  \begin{equation}
f=f_1 \times f_2 \colon S \to \Sigma_{b_1} \times \Sigma_{b_2}.
\end{equation}
If $\Gamma$ is the fibre of $\pi_1 \circ \mathbf{f} \colon S \to \Sigma_b$ over the point $p_1 \in \Sigma_b$, we see that the restriction  $\mathbf{f}|_{\Gamma} \colon \Gamma \to \Sigma_b$ is the smooth Galois cover induced by the group homomorphism $\varphi_1 \colon \pi_1(\Sigma_b - \{p_1\}, \, p_2) \to G$, whose image, by assumption, has index $m_1$ in $G$. It follows that the Galois cover $\mathbf{f}|_{\Gamma} \colon \Gamma \to \Sigma_b$ splits into the disjoint union of $m_1$ disjoint covers of the form $\Gamma' \to \Sigma_b$; each of them is a connected Galois cover, with Galois group isomorphic to $\mathrm{im}(\varphi_1)$, branched precisely at one point with branching order $n$. But then this is true for every fibre of $\pi_1 \circ \mathbf{f}$, because the number of connected components of the fibres of a proper, normal, flat morphism is constant, see \cite[Theorem 4.17 ($iii$)]{DM69}.

Summing up, the Stein factorization in diagram \eqref{dia:Stein-Kodaira-gi} is obtained by taking an \'{e}tale cover $\theta_1 \colon \Sigma_{b_1} \to \Sigma_b$ of degree $m_1$, and every fibre of the Kodaira fibration $f_1 \colon S \to \Sigma_{b_1}$ is homeomorphic to $\Gamma'$; thus we can compute both $b_1$ and $g_1$ by using the Hurwitz formula, obtaining the first relations in \eqref{eq:expression-gi} and \eqref{eq:expression-gFi}. The computation of  $b_2$ and $g_2$ is exactly the same, so we are done.
\end{proof}

\begin{remark} \label{rmk:non-abelian-G}
In Theorem \ref{thm:double-Kodaira-from-G}, the group $G$ is necessarily \emph{non-abelian}. Otherwise, since $A_{12}$ is a commutator in $\mathsf{P}_2(\Sigma_b)$, the element $\varphi(A_{12})$ would be trivial.
\end{remark}

\begin{remark} \label{rmk:degree-f-general}
Looking at the proof of Theorem \ref{thm:double-Kodaira-from-G}, we see that there is a commutative diagram
\begin{equation} \label{dia:degree-f-general}
\begin{tikzcd}
  S \ar{rr}{\mathbf{f}}  \ar{dr}{f} & & \Sigma_b \times \Sigma_b  \\
   & \Sigma_{b_1} \times \Sigma_{b_2} \ar[ur, "\theta_1 \times \theta_2"{sloped, anchor=south}] &
  \end{tikzcd}
\end{equation}
Since $\theta_1 \times \theta_2 \colon \Sigma_{b_1} \times \Sigma_{b_2} \to \Sigma_b \times \Sigma_b$ is \'{e}tale of degree $m_1 m_2$, it follows that the double Kodaira fibration $f \colon S \to  \Sigma_{b_1} \times \Sigma_{b_2}$ has degree $\frac{|G|}{m_1m_2}$ and is branched precisely over the curve
\begin{equation} \label{eq:branching-f}
(\theta_1 \times \theta_2)^{-1}(\Delta)=\Sigma_{b_1} \times_{\Sigma_b} \Sigma_{b_2}.
\end{equation}
Such a curve is always smooth, being the preimage of a smooth divisor via an \'{e}tale morphism. However, it can be reducible, cf. Proposition \ref{prop:degree-double-Kodaira}.
\end{remark}

We can now compute the invariants of $S$, cf. \cite[p. 55, formulae (3.9) and (3.10)]{Tr16}.

\begin{proposition} \label{prop:invariant-S-G}
Let $f \colon S \to \Sigma_{b_1} \times \Sigma_{b_2}$ be a double Kodaira fibration as in Theorem \emph{\ref{thm:double-Kodaira-from-G}}. Then the invariants of $S$ are
\begin{equation} \label{eq:invariants-S-G}
\begin{split}
c_1^2(S) & = |G|\,(2b-2) ( 4b-4 + 4 \mathfrak{n} - \mathfrak{n}^2 ) \\
c_2(S) & =   |G|\,(2b-2) (2b-2 + \mathfrak{n}),
\end{split}
\end{equation}
and so the slope and the signature of $S$ can be expressed as
\begin{equation} \label{eq:slope-signature-S-G}
\begin{split}
\nu(S) & = \frac{c_1^2(S)}{c_2(S)} = 2+ \frac{2 \mathfrak{n} - \mathfrak{n}^2}{2b-2 + \mathfrak{n} } \\
\sigma(S) & = \frac{1}{3}\left(c_1^2(S) - 2 c_2(S) \right) = \frac{1}{3} \, |G| \, (2b-2)(2 \mathfrak{n} - \mathfrak{n}^2).
\end{split}
\end{equation}
\begin{proof}
Let $F_1$ be any fibre of the the Kodaira fibration $f_1 \colon S \to \Sigma_{b_1}$. Since $f_1$ is a smooth holomorphic fibration, it follows by \cite[Lemma VI.4]{Be96} that the topological Euler number of $S$ is given by
\begin{equation*}
c_2(S)=\chi_{\mathrm{top}}(\Sigma_{b_1}) \chi_{\mathrm{top}}(F_1) =  (2-2b_1)(2-2g_1),
\end{equation*}
and the last expression equals $|G|\,(2b-2) (2b-2 + \mathfrak{n})$ by \eqref{eq:expression-gi} and \eqref{eq:expression-gFi}.

It remains to compute $c_1^2(S)$. To this pourpose, note that the ramification divisor of the Galois cover $\mathbf{f} \colon S \to \Sigma_b \times \Sigma_b$ equals $(n-1)D$, where $D \subset S$ is the effective divisor such that $\mathbf{f}^* \Delta = n D$. Then by the Hurwitz formula we deduce
\begin{equation} \label{eq:Hurwitz-S}
K_S = \mathbf{f}^* K_{\Sigma_b \times \Sigma_b} + (n-1)D,
\end{equation}
and so
\begin{equation} \label{eq:computation-c1^2}
\begin{split}
c_1^2(S)  & = (\mathbf{f}^* K_{\Sigma_b \times \Sigma_b})^2 + 2 \mathbf{f}^* K_{\Sigma_b \times \Sigma_b} \cdot (n-1) D + (n-1)^2 D^2 \\
& = |G| \, K_{\Sigma_b \times \Sigma_b}^2 + 2 (n-1)  K_{\Sigma_b \times \Sigma_b} \cdot \mathbf{f}_*D + (n-1)^2 \left(\frac{1}{n} \mathbf{f}^* \Delta \right)^2 \\
& = 8 \, |G| \, (b-1)^2 + 2 (n-1) K_{\Sigma_b \times \Sigma_b} \cdot \frac{|G|}{n} \Delta + \left( \frac{n-1}{n} \right)^2 |G| \, \Delta^2
\\
& = 8 \, |G| \, (b-1)^2 + 2 \left(\frac{n-1}{n} \right) |G| (4b-4) + \left( \frac{n-1}{n} \right)^2 |G|\, (2-2b) \\
& = \, |G| \, (2b-2) ( 4b-4 + 4 \mathfrak{n} - \mathfrak{n}^2 ).
\end{split}
\end{equation}
This completes the proof.
\end{proof}
\end{proposition}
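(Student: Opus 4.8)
The plan is to compute the two Chern numbers $c_2(S)$ and $c_1^2(S)$ separately, and then to obtain the slope and the signature by plugging these into the defining formulae \eqref{eq:slope+signature}. The computation of $c_2(S)$ is the easier of the two: since $f_1 \colon S \to \Sigma_{b_1}$ is a smooth holomorphic fibration, Ehresmann's theorem makes it a differentiable fibre bundle, and for such bundles the topological Euler characteristic is multiplicative (see \cite[Lemma VI.4]{Be96}). Thus I would write $c_2(S) = \chi_{\mathrm{top}}(\Sigma_{b_1})\,\chi_{\mathrm{top}}(F_1) = (2-2b_1)(2-2g_1)$ for a fibre $F_1$, and then substitute the values of $b_1$ and $g_1$ supplied by \eqref{eq:expression-gi} and \eqref{eq:expression-gFi} of Theorem \ref{thm:double-Kodaira-from-G}. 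The index $m_1$ cancels between the base and fibre contributions, leaving exactly $|G|(2b-2)(2b-2+\mathfrak{n})$.

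For $c_1^2(S)$ I would instead work upstairs with the branched Galois cover $\mathbf{f} \colon S \to \Sigma_b \times \Sigma_b$. Because this cover is branched exactly over the diagonal $\Delta$ with branching order $n$, I let $D$ be the reduced divisor determined by $\mathbf{f}^*\Delta = nD$, so that the Hurwitz formula reads $K_S = \mathbf{f}^* K_{\Sigma_b\times\Sigma_b} + (n-1)D$. Squaring this and recording that $\mathfrak{n} = (n-1)/n$, the problem reduces to three intersection numbers on the product surface, to be assembled through the projection formula together with $\mathbf{f}_* D = \tfrac{|G|}{n}\Delta$: namely $K_{\Sigma_b\times\Sigma_b}^2 = 8(b-1)^2$ (from $K_{\Sigma_b\times\Sigma_b} = \pi_1^*K_{\Sigma_b} + \pi_2^*K_{\Sigma_b}$ and the vanishing of the pulled-back squares), $K_{\Sigma_b\times\Sigma_b}\cdot\Delta = 4b-4$ (restricting the canonical class to the diagonal, on which each projection is an isomorphism onto $\Sigma_b$), and $\Delta^2 = 2-2b$ (the self-intersection of the diagonal being $\chi_{\mathrm{top}}(\Sigma_b)$). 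Collecting the three contributions $|G|\cdot 8(b-1)^2$, $2\mathfrak{n}|G|(4b-4)$ and $\mathfrak{n}^2|G|(2-2b)$ and factoring out the common $|G|(2b-2)$ produces the asserted value of $c_1^2(S)$.

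The arithmetic throughout is elementary once these geometric inputs are fixed, so I do not expect a genuine conceptual obstacle here; the only step requiring real care is the bookkeeping of the ramification divisor $D$, in particular the two identities $\mathbf{f}_* D = \tfrac{|G|}{n}\Delta$ and $\Delta^2 = 2-2b$, since a sign error in the diagonal self-intersection would corrupt precisely the $\mathfrak{n}^2$ coefficient and hence both the slope and the signature. Finally, with $c_1^2(S)$ and $c_2(S)$ in hand, I would obtain $\nu(S) = c_1^2(S)/c_2(S)$ and $\sigma(S) = \tfrac{1}{3}\bigl(c_1^2(S) - 2c_2(S)\bigr)$ by direct substitution; the two quotients simplify to the closed forms displayed in \eqref{eq:slope-signature-S-G}, completing the proof.
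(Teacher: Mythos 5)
Your proposal is correct and follows essentially the same route as the paper's proof: the multiplicativity of the topological Euler characteristic for the smooth fibration $f_1$ (via \cite[Lemma VI.4]{Be96}) for $c_2(S)$, and the Hurwitz formula $K_S = \mathbf{f}^* K_{\Sigma_b \times \Sigma_b} + (n-1)D$ for the branched cover $\mathbf{f}$, combined with the projection formula, $\mathbf{f}_*D = \tfrac{|G|}{n}\Delta$, and the intersection numbers $K_{\Sigma_b\times\Sigma_b}^2 = 8(b-1)^2$, $K_{\Sigma_b\times\Sigma_b}\cdot\Delta = 4b-4$, $\Delta^2 = 2-2b$, for $c_1^2(S)$. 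The only cosmetic difference is that you spell out the justifications of the three intersection numbers, which the paper leaves implicit.
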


\begin{remark} \label{rmk:Rokhlin}
If  either $n$ is odd or $D$ is $2$-divisible in $\mathrm{Pic}(S)$, then \eqref{eq:Hurwitz-S} shows that $K_S$ is $2$-divisible in $\mathrm{Pic}(S)$. Hence $S$ is a compact, oriented smooth spin $4$-manifold and so, by Rokhlin's Theorem \cite[p. 60]{Mo96}, its signature is divisible by $16$.
\end{remark}

\subsection{Non-degenerate Heisenberg covers of $\Sigma_b \times \Sigma_b$} \label{subsec:double-Kod-from-Heis}

We are now ready to apply the results of Subsection \ref{subsec:double-Kod-via-Galois} to our situation, where the role of $G$ will be played by the group $\mathsf{Heis}(V, \, \omega)$. In this way, we will obtain some Galois covers of $\Sigma_b \times \Sigma_b$, with Galois group $\mathsf{Heis}(V, \, \omega)$ and branched over the diagonal $\Delta$, that we will call \emph{$($non-degenerate$)$ Heisenberg covers}. As explained in the proof of Theorem \ref{thm:double-Kodaira-from-G}, the Stein factorizations of these covers provide  the double Kodaira fibrations we are looking for.

We use the same notation as in Section \ref{sec:rep-braid}, in particular we denote by $p$ an odd prime and by $V$ the $\mathbb{Z}_p$-vector space $H_1(\Sigma_b \times \Sigma_b - \Delta, \, \mathbb{Z}_p)$, where $b \geq 2$.
\begin{theorem} \label{thm:double-Kodaira-from-omega}
Assume $p \geq 5$ and let $\omega \in \wedge^2 V^{\vee}$ be a symplectic form of Heisenberg type, that exists by Proposition \eqref{prop:heis-forma-exist}. Then the group epimorphism $\varphi_{\omega} \colon \mathsf{P}_2(\Sigma_b) \to \mathsf{Heis}(V, \, \omega)$ defined in \eqref{eq:varphi} induces a Heisenberg cover $\mathbf{f} \colon S_{b, \, p} \to \Sigma_b \times \Sigma_b$ that, after applying the Stein factorization, gives rise to a double Kodaira fibration $f \colon S_{b, \, p} \to \Sigma_{b'} \times \Sigma_{b'}$, where
\begin{equation} \label{eq:g'}
b'-1=p^{2b}(b-1).
\end{equation}
Moreover, the two Kodaira fibrations $f_i \colon S_{b, \, p} \to \Sigma_{b'}$ have the same fibre genus $g$, which is related to $b$ by the formula
\begin{equation} \label{eq:g-gprime}
2g-2 = p^{2b+1} \left(2b-2 + \mathfrak{p}\right),
\end{equation}
where $\mathfrak{p}:= 1 - 1/p$. Finally, the invariants of $S_{b, \, p}$ are
\begin{equation} \label{eq:invariants-S}
\begin{split}
c_1^2(S_{b, \, p}) & = p^{4b+1}(2b-2) (4b-4 + 4 \mathfrak{p} - \mathfrak{p}^2) \\
c_2(S_{b, \, p}) & =  p^{4b+1}(2b-2)(2b-2 + \mathfrak{p}),
\end{split}
\end{equation}
and so the slope and the signature of $S_{b, \, p}$ can be expressed as
\begin{equation} \label{eq:slope-signature-S}
\begin{split}
\nu(S_{b, \, p}) & = \frac{c_1^2(S_{b, \, p})}{c_2(S_{b, \, p})} = 2+ \frac{2 \mathfrak{p} - \mathfrak{p}^2}{2b-2 + \mathfrak{p} } \\
\sigma(S_{b, \, p}) & = \frac{1}{3}\left(c_1^2(S_{b, \, p}) - 2 c_2(S_{b, \, p}) \right) = \frac{1}{3} p^{4b+1}(2b-2)(2 \mathfrak{p} - \mathfrak{p}^2).
\end{split}
\end{equation}
\end{theorem}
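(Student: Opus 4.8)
The plan is to recognize that this statement is nothing but the specialization of the general machinery of Theorem~\ref{thm:double-Kodaira-from-G} and Proposition~\ref{prop:invariant-S-G} to the group $G=\mathsf{Heis}(V,\,\omega)$. Consequently, the whole argument reduces to feeding three numerical inputs into those two results: the order $|G|$, the order $n$ of $\varphi_{\omega}(A_{12})$, and the two indices $m_1,\,m_2$ appearing in their hypotheses.

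First I would record that the required epimorphism is available and well-behaved. Since $\omega$ is of Heisenberg type, condition $\boldsymbol{(1)}$ of Definition~\ref{def:heisenberg-type}, realized concretely by the formulas~\eqref{eq:varphi} together with the compatibility check of Remark~\ref{rmk:lifting}, guarantees that $\varphi_{\omega}\colon \mathsf{P}_2(\Sigma_b)\to \mathsf{Heis}(V,\,\omega)$ is a surjective group homomorphism with $\varphi_{\omega}(A_{12})$ of order $p$. Hence Theorem~\ref{thm:double-Kodaira-from-G} applies with $n=p$, so that $\mathfrak{n}=1-1/p=\mathfrak{p}$. Moreover, since $\dim_{\mathbb{Z}_p}V=4b$, Proposition~\ref{prop:Heisenberg-is-extra-special} gives $|G|=p^{4b+1}$.

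The only genuine computation is the determination of $m_1$ and $m_2$, and this is where I would spend the effort. By~\eqref{eq:gens-kernel}, the kernel subgroup $\pi_1(\Sigma_b-\{p_1\},\,p_2)$ is generated by $\rho_{21},\ldots,\rho_{2b}$, $\tau_{21},\ldots,\tau_{2b}$ and $A_{12}$; applying $\varphi_{\omega}$ via~\eqref{eq:varphi}, its image is generated by $\mathsf{r}_{21},\ldots,\mathsf{r}_{2b}$, $\mathsf{t}_{21},\ldots,\mathsf{t}_{2b}$ and $\mathsf{z}$. Since $\mathsf{z}$ generates the center $\mathbb{Z}_p=\ker(\pi)$, while the $2b$ elements $\mathsf{r}_{2j},\,\mathsf{t}_{2j}$ project under $\pi\colon \mathsf{Heis}(V,\,\omega)\to V$ onto a $2b$-dimensional subspace, this image is exactly the full preimage in $\mathsf{Heis}(V,\,\omega)$ of that subspace, hence a subgroup of order $p^{2b}\cdot p=p^{2b+1}$. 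Therefore $m_1=|G|/p^{2b+1}=p^{2b}$. The identical argument applied to the generators~\eqref{eq:gens-kernel-new} of $\pi_1(\Sigma_b-\{p_2\},\,p_1)$, exploiting the symmetry of the presentation described in Remark~\ref{rmk:presentation-braid}$(ii)$, yields $m_2=p^{2b}$ as well; in particular $b_1=b_2=:b'$ and $g_1=g_2=:g$.

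Finally, I would substitute $|G|=p^{4b+1}$, $m_1=m_2=p^{2b}$ and $\mathfrak{n}=\mathfrak{p}$ into~\eqref{eq:expression-gi} and~\eqref{eq:expression-gFi} to obtain $b'-1=p^{2b}(b-1)$ and $2g-2=p^{2b+1}(2b-2+\mathfrak{p})$, and into~\eqref{eq:invariants-S-G} and~\eqref{eq:slope-signature-S-G} to read off the stated values of $c_1^2$, $c_2$, $\nu$ and $\sigma$. The main (and essentially the only) obstacle is the index computation: one must verify that the image of each kernel subgroup is the \emph{full} preimage of a $2b$-dimensional subspace of $V$, which relies on both that $\mathsf{z}$ lies in the image, because $A_{12}$ is itself one of the kernel generators, and that the $2b$ relevant vectors are linearly independent in $V$. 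Everything else is a mechanical substitution into the already-established general formulas.
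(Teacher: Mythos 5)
Your proposal is correct and follows essentially the same route as the paper: both arguments reduce the statement to Theorem~\ref{thm:double-Kodaira-from-G} and Proposition~\ref{prop:invariant-S-G} applied with $G=\mathsf{Heis}(V,\,\omega)$, $n=p$, $|G|=p^{4b+1}$ and $m_1=m_2=p^{2b}$, the index computation resting on the fact that $\mathrm{im}(\varphi_1)$ is generated by $\mathsf{r}_{21},\ldots,\mathsf{r}_{2b},\mathsf{t}_{21},\ldots,\mathsf{t}_{2b},\mathsf{z}$ and has order $p^{2b+1}$. The only cosmetic difference is how that order is counted: the paper identifies the image with the smaller Heisenberg group $\mathsf{Heis}(V_1,\,\omega_1)$, where $\omega_1$ is given by the matrix $M_b$, whereas you note that the image contains the center $\mathbb{Z}_p=\ker\pi$ and is therefore the full preimage of a $2b$-dimensional subspace of $V$ --- an equivalent count.
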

\begin{proof}
First of all, let us recall that $\mathsf{z}=\varphi_{\omega}(A_{12})$ has order $p$ in $\mathsf{Heis}(V, \, \omega)$. Let us consider now the group homomorphism
\begin{equation} \label{eq:composed-homomorphism}
\varphi_1 \colon \pi_1(\Sigma_b - \{p_1\}, \, p_2) \to \mathsf{Heis}(V, \, \omega),
\end{equation}
given by the composition of $\varphi_{\omega}$ with  the monomorphism $\pi_1(\Sigma_b - \{p_1\}, \, p_2) \to  \mathsf{P}_2(\Sigma_b)$. Its image is the subgroup of $\mathsf{Heis}(V, \, \omega)$ generated by
\begin{equation*}
\mathsf{r}_{21}, \ldots, \mathsf{r}_{2b}, \; \mathsf{t}_{21}, \ldots, \mathsf{t}_{2b}, \; \mathsf{z},
\end{equation*}
and this is in turn isomorphic to $\mathsf{Heis}(V_1, \, \omega_1)$, where $V_1 = H_1(\Sigma_b - \{p_1 \}, \, \mathbb{Z}_p)$ is generated by $r_{21}, \ldots {r}_{2b}$, $t_{21}, \ldots, t_{2b}$ and $\omega_1$ is the restriction of $\omega$ to $V_1$, namely the symplectic form $\omega_1 \colon V_1 \times V_1 \to \mathbb{Z}_p$ defined by the matrix $M_b$, see \eqref{eq:M}; note that $\mathsf{Heis}(V_1, \, \omega_1)$ has order $p^{2b+1}$. Clearly, the same argument works after exchanging the roles of $p_1$ and $p_2$; thus, both $\mathrm{im}(\varphi_1)$ and $\mathrm{im}(\varphi_2)$ have index $p^{2b}$ in $\mathsf{Heis}(V, \, \omega)$, and the result follows by applying Theorem \ref{thm:double-Kodaira-from-G} and Proposition \ref{prop:invariant-S-G}.
\end{proof}

Let us now describe in more detail the structure of the double Kodaira fibration $f \colon S_{b, \, p} \to \Sigma_{b'} \times   \Sigma_{b'}$; it will turn out that it is very simple, in the sense of Definition \ref{def:simple-very-symple-standard}.

\begin{proposition} \label{prop:degree-double-Kodaira}
The finite morphism $f \colon S_{b, \, p} \to \Sigma_{b'} \times   \Sigma_{b'}$ is a cyclic cover of degree $p$, totally branched on a reducible divisor
\begin{equation} \label{eq:degree-double-Kodaira}
D = \sum_{c \in (\mathbb{Z}_p)^{2b}} D_{c},
\end{equation}
where the $D_{c}$ are pairwise disjoint graphs of automorphisms of $\Sigma_{b'}$. Moreover, $D_0$ coincides with the diagonal $\Delta' \subset \Sigma_{b'} \times   \Sigma_{b'}$.
\end{proposition}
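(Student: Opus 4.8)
The plan is to combine the group theory of $G=\mathsf{Heis}(V,\,\omega)$ with the covering-space description of $f$ furnished by Remark \ref{rmk:degree-f-general}. First I would pin down the degree and the Galois group. By Remark \ref{rmk:degree-f-general} the degree of $f$ equals $|G|/(m_1m_2)=p^{4b+1}/p^{4b}=p$, and the branch locus is $(\theta_1\times\theta_2)^{-1}(\Delta)=\Sigma_{b'}\times_{\Sigma_b}\Sigma_{b'}$. To see that $f$ is \emph{cyclic}, I would first check, using the commutator relations \eqref{eq:rel-heis-g}, that the subgroups $H_i:=\mathrm{im}(\varphi_i)$ are \emph{normal} in $G$: conjugating the generators $\mathsf{r}_{2j},\,\mathsf{t}_{2j}$ of $H_1$ by the remaining generators $\mathsf{r}_{1k},\,\mathsf{t}_{1k}$ only multiplies them by central powers of $\mathsf{z}$, and symmetrically for $H_2$. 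Hence each $\theta_i\colon\Sigma_{b'}\to\Sigma_b$ is itself a Galois étale cover, with group $G/H_i\simeq(\mathbb{Z}_p)^{2b}$.

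Next, passing to the complement of the diagonal and applying the Galois correspondence for $\varphi_\omega\colon\mathsf{P}_2(\Sigma_b)\to G$ (re-extending across $\Delta$ by Proposition \ref{prop:Riemann-ext}), I would identify the deck group of $f$. Since $H_i/Z(G)=V_i$ with $V_1=\langle r_{2j},\,t_{2j}\rangle$ and $V_2=\langle r_{1j},\,t_{1j}\rangle$ complementary subspaces of $V$, the intermediate cover $\Sigma_{b'}\times\Sigma_{b'}$ corresponds to $\ker\bar\phi$, where $\bar\phi\colon\pi_1(\Sigma_b\times\Sigma_b)\to V$ is the homomorphism induced by $\varphi_\omega$ modulo the centre (cf. the proof of Theorem \ref{thm:interpretation-obstruction}). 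A short diagram chase then shows that $f$ is Galois with group $\varphi_\omega\!\left(\varphi_\omega^{-1}(Z(G))\right)=Z(G)=\langle\mathsf{z}\rangle\simeq\mathbb{Z}_p$; equivalently, this is $H_1\cap H_2$. Being of prime degree, $f$ is automatically \emph{totally} branched along its branch locus, with branching order $p$ inherited from $\mathbf{f}$.

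Then I would determine the geometry of $D$. The two maps $\theta_1,\theta_2$ are both the canonical $(\mathbb{Z}_p)^{2b}$-cover of $\Sigma_b$ attached to $H_1(\Sigma_b,\,\mathbb{Z}_p)$ — indeed $G/H_i\simeq(\mathbb{Z}_p)^{2b}$ with quotient map the mod-$p$ abelianization — so we may identify $\theta_1=\theta_2=:\theta$, a connected Galois cover whose deck group $A\simeq(\mathbb{Z}_p)^{2b}$ acts \emph{freely} on $\Sigma_{b'}$. The standard description of the fibre product of a Galois cover with itself then gives
\begin{equation*}
D=\Sigma_{b'}\times_{\Sigma_b}\Sigma_{b'}=\bigsqcup_{a\in A}\Gamma_a,\qquad \Gamma_a=\{(x,\,a\cdot x)\mid x\in\Sigma_{b'}\},
\end{equation*}
where $\Gamma_a$ is the graph of the automorphism $a\in A\subset\mathrm{Aut}(\Sigma_{b'})$. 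Freeness of the $A$-action makes the graphs pairwise disjoint, and the component $\Gamma_0$ (graph of the identity) is exactly the diagonal $\Delta'\subset\Sigma_{b'}\times\Sigma_{b'}$. Writing $A=(\mathbb{Z}_p)^{2b}$ and setting $D_c:=\Gamma_c$ yields the claimed decomposition $D=\sum_{c\in(\mathbb{Z}_p)^{2b}}D_c$ with $D_0=\Delta'$.

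The main obstacle I anticipate is the bookkeeping in the second paragraph: matching the covering-theoretic subgroup cutting out $\Sigma_{b'}\times\Sigma_{b'}$ (namely $\ker\bar\phi$, which uses $V_1\cap V_2=0$) with the image of $\varphi_\omega$ on its preimage, so as to conclude that the deck group of $f$ is precisely $Z(G)$, and ensuring that the branching order survives both the Stein factorization and the re-extension across $\Delta'$ supplied by Proposition \ref{prop:Riemann-ext}. A secondary point requiring care is the identification of $\theta_1$ with $\theta_2$ as a single map $\theta$ (not merely as isomorphic covers): this is what licenses speaking of ``graphs of automorphisms of one curve $\Sigma_{b'}$'' and what fixes the normalization $\Gamma_0=\Delta'$, the remaining ambiguity being an $A$-torsor that only relabels the index $c$.
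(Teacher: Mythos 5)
Your proposal is correct and follows essentially the same route as the paper's proof: normality of the subgroups $H_i=\mathrm{im}(\varphi_i)$ in $\mathsf{Heis}(V,\,\omega)$, the identification $\theta_1=\theta_2=\theta$, the conclusion that $f$ is Galois of degree $p$ (hence cyclic and totally branched), and the decomposition of the branch locus $\Sigma_{b'}\times_{\Sigma_b}\Sigma_{b'}$ into the pairwise disjoint graphs $D_c$ of the deck transformations $c\in(\mathbb{Z}_p)^{2b}$, with $D_0=\Delta'$. The only differences are in how two intermediate steps are justified: where the paper cites \cite[Lemma 4.1]{Pig19} to identify the Galois group of $f$ with the kernel of $\mathsf{Heis}(V,\,\omega)\to(\mathbb{Z}_p)^{2b}\times(\mathbb{Z}_p)^{2b}$, you recover the same group $H_1\cap H_2=Z(G)=\langle\mathsf{z}\rangle$ directly from the Galois correspondence, and where the paper identifies $\theta_1$ with $\theta_2$ via the order-two automorphism exchanging $H_1$ and $H_2$, you observe that both are the canonical mod-$p$ homology cover of $\Sigma_b$; both substitutions are sound.
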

\begin{proof}
Let us first observe that the two subgroups
\begin{equation}
\mathrm{im}(\varphi_1) = \langle \mathsf{r}_{21}, \ldots, \mathsf{r}_{2b}, \; \mathsf{t}_{21}, \ldots, \mathsf{t}_{2b}, \; \mathsf{z} \rangle, \quad \mathrm{im}(\varphi_2) = \langle \mathsf{r}_{11}, \ldots, \mathsf{r}_{1b}, \; \mathsf{t}_{11}, \ldots, \mathsf{t}_{1b}, \; \mathsf{z} \rangle
\end{equation}
of $\mathsf{Heis}(V, \, \omega)$ are both normal (because they contain the commutator subgroup $\langle z \rangle$) and are exchanged by an automorphism of order $2$. Thus the two \'{e}tale covers
\begin{equation}
\theta_1 \colon \Sigma_{b'} \to \Sigma_b, \quad \theta_2 \colon \Sigma_{b'} \to \Sigma_b
\end{equation}
are both Galois, with Galois group $(\mathbb{Z}_p)^{2b}$ and, up to an involution of $\Sigma_{b'}$, we may assume $\theta_1 = \theta_2 = \theta$. This implies that the cover
\begin{equation}
\theta \times \theta \colon \Sigma_{b'} \times \Sigma_{b'} \to \Sigma_b \times \Sigma_b
\end{equation}
is Galois, with Galois group $(\mathbb{Z}_p)^{2b} \times (\mathbb{Z}_p)^{2b}$, and so
$f \colon S_{b, \, p} \to \Sigma_{b'} \times \Sigma_{b'}$ is also Galois, with Galois group isomorphic to the kernel of the group epimorphism
\begin{equation}
\mathsf{Heis}(V, \, \omega) \to (\mathbb{Z}_p)^{2b} \times (\mathbb{Z}_p)^{2b},
\end{equation}
see \cite[Lemma 4.1]{Pig19}. Since such a kernel has order $p$, it follows that $f$ is a cyclic cover of degree $p$, in particular it is totally branched because $p$ is prime. Remark \ref{rmk:degree-f-general} tells us that the branch locus of $f$ is smooth and it is given by the fibre product
\begin{equation}
\begin{split}
D = \Sigma_{b'} \times_{\Sigma_b} \Sigma_{b'} = & \left\{\, (x, \, y) \; \; | \; \; x, \, y \in \Sigma_{b'} \;\; \mathrm{and} \; \; \theta(x)=\theta(y) \,  \right\} \\
= & \left\{\, (x, \, c \cdot x) \; \; | \;\; x \in \Sigma_{b'}, \, c \in (\mathbb{Z}_p)^{2b} \, \right\},
\end{split}
\end{equation}
so we can end the proof by setting
\begin{equation}
D_{c} =\left\{\, (x, \, c \cdot x) \; \; | \;\; x \in \Sigma_{b'} \, \right\}.
\end{equation}
\end{proof}

\begin{proposition} \label{prop:invariants-nondegenerate-case}
Let $f \colon S_{b, \, p} \to \Sigma_{b'} \times   \Sigma_{b'}$ be a double Kodaira fibration as in Theorem $\mathrm{\ref{thm:double-Kodaira-from-omega}}$.
\begin{itemize}
\item[$\boldsymbol{(1)}$] We have
\begin{equation} \label{eq:slope-1}
2  < \nu(S_{b, \, p} ) \leq 2 + \frac{12}{35},
\end{equation}
and equality on the right holds precisely in two cases, namely
\begin{equation}
\nu(S_{2, \, 5}) = \nu(S_{2, \, 7})= 2 + \frac{12}{35}.
\end{equation}
Moreover, $\nu(S_{2, \, p}) > 2 + 1/3$ for all $p \geq 5$. More precisely, if $p \geq 7$ the function $\nu(S_{2, \, p})$ is strictly decreasing and
\begin{equation*}
\lim_{p \rightarrow +\infty} \nu(S_{2, \, p}) = 2 + \frac{1}{3}.
\end{equation*}

\item[$\boldsymbol{(2)}$] The signature $\sigma(S_{b, \, p})$ is divisible by $16$ and moreover
\begin{equation}
\sigma(S_{b, \, p}) \geq \sigma(S_{2, \,5})=2^4 \cdot 5^7.
\end{equation}
\end{itemize}
\end{proposition}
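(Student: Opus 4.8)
The plan is to reduce everything to the elementary analysis of the two explicit rational expressions for $\nu(S_{b,\,p})$ and $\sigma(S_{b,\,p})$ furnished by Theorem~\ref{thm:double-Kodaira-from-omega}. Throughout I would write $\mathfrak{p} = 1 - 1/p$ and set $E(b,\,p) := \nu(S_{b,\,p}) - 2 = \frac{\mathfrak{p}(2-\mathfrak{p})}{2b-2+\mathfrak{p}}$. Since $p \geq 5$ forces $\mathfrak{p} \in (0,\,1)$, both $\mathfrak{p}$ and $2 - \mathfrak{p}$ are positive, as is the denominator $2b - 2 + \mathfrak{p}$ for $b \geq 2$; hence $E > 0$ and $\nu > 2$ unconditionally. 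For the upper bound I would first note that, for fixed $p$, the numerator of $E$ is independent of $b$ while the denominator strictly increases with $b$, so $E(b,\,p) < E(2,\,p)$ whenever $b > 2$. This reduces the determination of the maximal slope to the one-variable problem of maximizing $E(2,\,p) = \frac{\mathfrak{p}(2-\mathfrak{p})}{2+\mathfrak{p}}$ over primes $p \geq 5$.

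I would then regard $E(2,\,\cdot)$ as a function of the real variable $x = \mathfrak{p} \in [4/5,\,1)$. Computing the derivative, its numerator is proportional to $4 - 4x - x^2$, which vanishes at $x_0 = 2\sqrt{2} - 2 \approx 0.828$; thus $E(2,\,\cdot)$ increases on $(0,\,x_0)$ and decreases on $(x_0,\,1)$. The decisive computation is to solve $E(2,\,x) = 12/35$, equivalently $35x^2 - 58x + 24 = 0$, whose discriminant is $4$ and whose roots are exactly $x = 4/5$ and $x = 6/7$, i.e.\ $p = 5$ and $p = 7$. Since $x_0$ lies strictly between these two abscissae and no prime yields an $x$ in the open interval $(4/5,\,6/7)$, I would conclude that $E(2,\,p) \leq 12/35$ for all primes $p \geq 5$, with equality precisely for $(b,\,p) \in \{(2,\,5),\,(2,\,7)\}$ (the case $b > 2$ being handled by the strict reduction above); for $p \geq 11$ one has $x \geq 10/11 > x_0$, whence $E$ is strictly decreasing, yielding both $\nu(S_{2,\,p}) < 2 + 12/35$ and the asserted strict monotonicity. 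Finally, $\nu(S_{2,\,p}) > 2 + 1/3$ amounts to $E(2,\,x) > 1/3$, i.e.\ to $(3x-2)(x-1) < 0$, which holds on $(2/3,\,1) \supseteq [4/5,\,1)$; letting $x \to 1^-$ gives $E(2,\,x) \to 1/3$ and hence $\lim_{p \to \infty} \nu(S_{2,\,p}) = 2 + 1/3$.

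For part $\boldsymbol{(2)}$ the divisibility by $16$ follows immediately from Remark~\ref{rmk:Rokhlin}: here the branching order is $n = p$, which is odd for $p \geq 5$, so $S_{b,\,p}$ is spin and Rokhlin's theorem applies. For the lower bound I would rewrite the signature using $2\mathfrak{p} - \mathfrak{p}^2 = (p^2-1)/p^2$, obtaining $\sigma(S_{b,\,p}) = \tfrac{1}{3}(2b-2)(p^2-1)\,p^{4b-1}$. In this form each of the factors $2b-2$, $p^2-1$ and $p^{4b-1}$ is strictly increasing in $b$ (for $b \geq 2$) and in $p$ (among primes $\geq 5$), so $\sigma$ is jointly minimized at the corner $(b,\,p) = (2,\,5)$; substituting there gives $\tfrac{1}{3}\cdot 2 \cdot 24 \cdot 5^7 = 16 \cdot 5^7 = 2^4 \cdot 5^7$.

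The only genuinely non-formal point is the slope bound of the second paragraph: everything hinges on the arithmetic coincidence that the line $E = 12/35$ meets the graph of $E(2,\,\cdot)$ exactly at the two admissible abscissae $x = 4/5$ and $x = 6/7$, with the interior maximum $x_0$ sandwiched between them. I expect the main obstacle to be pinning this down cleanly, namely locating $x_0$ and verifying that $35x^2 - 58x + 24$ has precisely these two roots; once that is established, the single-interior-maximum shape of $E(2,\,\cdot)$ forces all the claimed inequalities at once. By contrast, the signature statement is a routine joint-monotonicity argument.
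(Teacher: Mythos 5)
Your proposal is correct and takes essentially the same route as the paper: reduce to $b=2$ by monotonicity of the denominator in $b$, then a one-variable calculus analysis of the slope function whose interior maximum is sandwiched between the abscissae corresponding to $p=5$ and $p=7$, with the same monotonicity-plus-limit argument for $p \geq 7$, and the same rewriting $\sigma(S_{b,\,p})=\tfrac{1}{3}(2b-2)p^{4b-1}(p^2-1)$ with joint monotonicity for part $\boldsymbol{(2)}$. The only cosmetic difference is that you work in the variable $x=\mathfrak{p}=1-1/p$ (critical point $2\sqrt{2}-2$) and identify the equality cases by solving $35x^2-58x+24=0$, whereas the paper works directly in $x=p$ (critical point $3+2\sqrt{2}$) and evaluates at $p=5,\,7$.
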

\begin{proof}
$\boldsymbol{(1)}$ From \eqref{eq:slope-signature-S} we get
\begin{equation} \label{eq:bounds-slope}
2 < \nu(S_{b, \, p}) = 2+ \frac{2 \mathfrak{p} - \mathfrak{p}^2}{2b-2 + \mathfrak{p} } =  2+ \frac{p^2-1}{(2b-1)p^2-p}.
\end{equation}
Since for $p \rightarrow + \infty$ the last expression is asymptotic to $2+ \frac{1}{2b-1}$, we see that $\nu(S_{b, \, p})$ is arbitrarily close to $2$ if both $b$ and $p$ are large enough. Moreover, we also have
\begin{equation}
\nu(S_{b, \, p}) \leq \nu(S_{2, \, p}) = 2 +  \frac{p^2-1}{3p^2-p},
\end{equation}
with equality holding if and only if $b=2$. The rest of the proof is a consequence of the following easy facts: for $x \geq 2$, the real function
\begin{equation*}
u(x)=\frac{x^2-1}{3x^2-x}
\end{equation*}
has a local (and global) maximum at $x_0 = 3+2 \sqrt{2} \simeq 5.828$, and for $x \geq x_0$ it is monotonically decreasing, with $\lim\limits_{x \rightarrow +\infty} u(x) = \frac{1}{3}$ from above. \\ \\ \noindent
$\boldsymbol{(2)}$ We can rewrite the second equality in \eqref{eq:slope-signature-S} as
\begin{equation} \label{eq:alternative-signature}
\sigma(S_{b, \, p})=\frac{1}{3}(2b-2)p^{4b-1}(p^2-1),
\end{equation}
and from this it is clear that $\sigma(S_{b, \, p})$ is a multiple of $16$ (since $p$ is odd, this also follows from Remark \ref{rmk:Rokhlin}) and that  the minimum value of $\sigma(S_{b, \, p})$ is attained precisely when $(b, \, p)=(2, \, 5)$.
\end{proof}

\begin{remark} \label{rmk:pairs}
Equations \eqref{eq:g'} and \eqref{eq:g-gprime} allow us to have complete control on both the base genus $b'$ and the fibre genus $g$ for the Kodaira fibrations $S_{b, \, p} \to \Sigma_{b'}$. For example, looking at the slope-maximizing cases mentioned in Proposition \ref{prop:invariants-nondegenerate-case} we obtain
\begin{equation} \label{eq:particular-cases}
\begin{split}
S_{2, \, 5}: \quad & (b', \, g)=(626, \, 4376) \\
S_{2, \, 7}: \quad & (b', \, g)=(2402, \, 24011). \\
\end{split}
\end{equation}
\end{remark}

\subsection{Degenerate Heisenberg covers of $\Sigma_b \times \Sigma_b$} \label{subsec:degenerate-Heis}
Let us consider now the situation where the alternating form $\omega \colon V \times V \to \mathbb{Z}_p$ satisfies one of the equivalent conditions in Definition \ref{def:Heis}, but it is not symplectic; let us denote as usual by  $V_0$ the non-trivial kernel of $\omega$, and by $W=V/V_0$ the corresponding quotient vector space.  Then, composing the group epimorphism $\mathsf{Heis}(V, \, \omega) \to \mathsf{Heis}(W, \, \omega)$ defined in Remark \ref{rmk: degenerate 2-form} with the homomorphism $\varphi_{\omega } \colon \mathsf{P}_2(\Sigma_b) \to \mathsf{Heis}(V, \, \omega)$ given by \eqref{eq:varphi}, we obtain a group epimorphism $\varphi \colon \mathsf{P}_2(\Sigma_b) \to \mathsf{Heis}(W, \, \omega)$ such that $\varphi(A_{12})=\mathsf{z}$. By Proposition \ref{prop:Riemann-ext}, this in turn yields a Galois cover $\mathbf{f} \colon S \to \Sigma_b \times \Sigma_b$, with Galois group isomorphic to $\mathsf{Heis}(W, \, \omega)$ and branched exactly over $\Delta$ with branching order $p$; such a $\mathbf{f}$ will be called a \emph{degenerate Heisenberg cover}. We will not attempt here to classify all possible covers of this type, and we will limit ourselves to analyze the following important example.

We assume that $p$ divides $b+1$, so that $-b=1$ holds in $\mathbb{Z}_p$, and we take
\begin{equation}
\lambda_1=\ldots=\lambda_b=\mu_1=\ldots \mu_b = -1 \in \mathbb{Z}_p.
\end{equation}
Therefore relations \eqref{eq:lambda-mu} are satisfied, so \eqref{eq:xi(omega)} shows that the corresponding alternating form $\omega$ satisfies $\xi(\omega)=\delta$. However, $\omega$ is not symplectic, since its associate matrix
\begin{equation} \label{eq:omega-degenerate}
\Omega_b=
\begin{pmatrix}
J_b & J_b \\
J_b & J_b
\end{pmatrix} \in \mathrm{Mat}_{4b}(\mathbb{Z}_p)
\end{equation}
has rank $2b$ and consequently $\omega$ has a $2b$-dimensional kernel $V_0$, namely
\begin{equation} \label{ker-omega}
V_0 = \langle r_{11}-r_{21}, \, t_{11}-t_{21}, \ldots, r_{1b}-r_{2b}, \, t_{1b}-t_{2b} \rangle.
\end{equation}
Then the images of $r_{11}, \, t_{11}, \ldots, r_{1b}, \, t_{1b}$ in the quotient vector space $W=V/V_0$ equal the images of $r_{21}, \, t_{21}, \ldots, r_{2b}, \, t_{2b}$, respectively. Let us denote such images  by $r_1, \, t_1, \ldots, r_b,\, t_b$ and let us call $\mathsf{r}_1, \, \mathsf{t}_1, \ldots, \mathsf{r}_b,\, \mathsf{t}_b$ the corresponding generators of the Heisenberg group $\mathsf{Heis}(W, \, \omega)$. Then, looking at \eqref{eq:rel-heis-g}, we see that a presentation for $\mathsf{Heis}(W, \, \omega)$ is as follows: for all $j,\, k \in \{1, \ldots, b \}$ we have
\begin{equation} \label{eq:rel-heis-g-degenerate}
\begin{split}
 \mathsf{r}_{j}^p & = \mathsf{t}_{j}^p=\mathsf{z}^p=1 \\
[\mathsf{r}_{j}, \, \mathsf{z}] &  = [\mathsf{t}_{j}, \, \mathsf{z}]= 1 \\
[\mathsf{r}_j, \mathsf{r}_k] & = [\mathsf{t}_j, \mathsf{t}_k] = 1 \\
[\mathsf{r}_{j}, \,\mathsf{t}_{k}] & =\mathsf{z}^{- \delta_{jk}}.
\end{split}
\end{equation}
Furthermore, in this situation, the group epimorphism $\varphi \colon \mathsf{P}_2(\Sigma_b) \to \mathsf{Heis}(W, \, \omega)$ is explicitly given by
\begin{equation} \label{eq:varphi-final}
\varphi(\rho_{1j}) = \varphi(\rho_{2j})=\mathsf{r}_j, \quad \varphi(\tau_{1j}) = \varphi(\tau_{2j})=\mathsf{t}_j, \quad \varphi(A_{12})= \mathsf{z}.
\end{equation}

In the situation of degenerate Heisenberg covers, we can deal with the case $p=2$, too. In this situation we cannot define the group $\mathsf{Heis}(W, \, \omega)$, see Remark \ref{rmk:two-descriptions-heisenberg}, but we can use the matrix group $\mathsf{H}_{2b+1}(\mathbb{Z}_2)$, which has order $2^{2b+1}$ and exponent $4$ (Remark \ref{rmk:heis-2-group}). A presentation for it is obtained by simply putting $p=2$ in \eqref{eq:rel-heis-g-degenerate}, i.e., for all $j, \, k  \in \{1, \ldots, b \}$ we have
\begin{equation} \label{eq:rel-heis-g-degenerate-2}
\begin{split}
 \mathsf{r}_{j}^2 & = \mathsf{t}_{j}^2=\mathsf{z}^2=1 \\
[\mathsf{r}_{j}, \, \mathsf{z}] &  = [\mathsf{t}_{j}, \, \mathsf{z}]= 1 \\
[\mathsf{r}_j, \mathsf{r}_k] & = [\mathsf{t}_j, \mathsf{t}_k] = 1 \\
[\mathsf{r}_{j}, \,\mathsf{t}_{k}] & =\mathsf{z}^{- \delta_{jk}}.
\end{split}
\end{equation}
So, if $2$ divides $b+1$ (i.e., if $b$ is odd) we can still consider a degenerate Heisenberg cover $\mathbf{f}  \colon S \to \Sigma_b \times \Sigma_b$, namely the one induced by the group epimorphism  $\varphi \colon \mathsf{P}_2(\Sigma_b) \to \mathsf{H}_{2b+1}(\mathbb{Z}_2)$ given exactly as in \eqref{eq:varphi-final}. Furthermore, since for an odd prime $p$ the group $\mathsf{Heis}(W, \omega)$ is isomorphic to $\mathsf{H}_{2b+1}(\mathbb{Z}_p)$, we can treat both the cases $p$ odd and $p=2$ at once, by considering \eqref{eq:varphi-final}
as a group epimorphism $\varphi \colon \mathsf{P}_2(\Sigma_b) \to \mathsf{H}_{2b+1}(\mathbb{Z}_p)$.

Interestingly, Theorem \ref{thm:directly-Kodaira} below will show that corresponding degenerate Heisenberg cover directly yields a double Kodaira fibration, that is, in this situation no Stein factorization is needed; in the sequel, we will use the ``$\circ$'' superscript in order to emphasize this fact, writing $f \colon S_{b, \, p}^{\circ} \to \Sigma_b \times \Sigma_b$ instead of $\mathbf{f}  \colon S_{b, \, p} \to \Sigma_b \times \Sigma_b$ when dealing with degenerate Heisenberg covers. 

This is in some respect an advantage with respect to the construction with a symplectic $\omega$, because, since we are not forced to take finite {\'e}tale covers, we get Kodaira surfaces with \emph{any} base genus $b$ and, moreover, we can choose as base curve $\Sigma_b$ \emph{any} curve of genus $b$. However, we can obtain only finitely many values of the fibre genus $g$ for a fixed $b$, because there are only finitely many primes $p$ dividing $b+1$. 

\begin{theorem} \label{thm:directly-Kodaira}
Assume that $p$ $(p \geq 2)$ divides $b+1$ and let $f \colon S^{\circ}_{b, \, p} \to \Sigma_b \times \Sigma_b$ be the degenerate Heisenberg cover corresponding to the group epimorphism $\varphi \colon \mathsf{P}_2(\Sigma_b) \to \mathsf{H}_{2b+1}(\mathbb{Z}_p)$ described above. Then, composing $f$  with the projections onto the two factors, we obtain two Kodaira fibrations $f_i \colon S^{\circ}_{b, \, p} \to \Sigma_b$, with the same fibre genus $g$ satisfying
\begin{equation} \label{eq:fibre-genus degenerate}
2g-2 = p^{2b+1}(2b-2+ \mathfrak{p}).
\end{equation}
Moreover, the invariants of $S^{\circ}_{b, \, p}$ are
\begin{equation} \label{eq:invariants-S-degenerate}
\begin{split}
c_1^2(S^{\circ}_{b, \, p}) & =  p^{2b+1}(2b-2) (4b-4 + 4 \mathfrak{p} - \mathfrak{p}^2) \\
c_2(S^{\circ}_{b, \, p}) & = p^{2b+1}(2b-2)(2b-2 + \mathfrak{p}),
\end{split}
\end{equation}
and so the slope and the signature of $S^{\circ}_{b, \, p}$ can be expressed as
\begin{equation} \label{eq:slope-signature-S-degenerate}
\begin{split}
\nu(S^{\circ}_{b, \, p}) & = \frac{c_1^2(S^{\circ}_{b, \, p})}{c_2(S^{\circ}_{b, \, p})} = 2+ \frac{2 \mathfrak{p} - \mathfrak{p}^2}{2b-2 + \mathfrak{p} } \\
\sigma(S^{\circ}_{b, \, p}) & = \frac{1}{3}\left(c_1^2(S^{\circ}_{b, \, p}) - 2 c_2(S^{\circ}_{b, \, p}) \right) = \frac{1}{3} p^{2b+1}(2b-2)(2 \mathfrak{p} - \mathfrak{p}^2).
\end{split}
\end{equation}
\end{theorem}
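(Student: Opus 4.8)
The plan is to realize this statement as a direct application of the general machinery already established in Theorem~\ref{thm:double-Kodaira-from-G} and Proposition~\ref{prop:invariant-S-G}, with the finite group $G$ taken to be $\mathsf{H}_{2b+1}(\mathbb{Z}_p)$. Since the epimorphism $\varphi$ and its defining relations \eqref{eq:varphi-final}, \eqref{eq:rel-heis-g-degenerate} (or \eqref{eq:rel-heis-g-degenerate-2} when $p=2$) are already in hand, the entire content of the proof reduces to identifying the numerical data $n$, $m_1$, $m_2$, $|G|$ entering those results and then specializing the formulae. First I would record the immediate data: by \eqref{eq:varphi-final} the element $\varphi(A_{12})=\mathsf{z}$ has order $p$, so $n=p$ and hence $\mathfrak{n}=\mathfrak{p}$ in the notation of Theorem~\ref{thm:double-Kodaira-from-G}, while $|G|=p^{2b+1}$.

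The key step, and the one that explains why no Stein factorization is needed here in contrast to Theorem~\ref{thm:double-Kodaira-from-omega}, is to show that $m_1=m_2=1$. For this I would restrict $\varphi$ to the kernel subgroup $\pi_1(\Sigma_b-\{p_1\},\,p_2)$, which by \eqref{eq:gens-kernel} is generated by $\rho_{21},\ldots,\rho_{2b},\tau_{21},\ldots,\tau_{2b},A_{12}$; under \eqref{eq:varphi-final} these map to $\mathsf{r}_1,\ldots,\mathsf{r}_b,\mathsf{t}_1,\ldots,\mathsf{t}_b,\mathsf{z}$, which is precisely the full set of generators of $\mathsf{H}_{2b+1}(\mathbb{Z}_p)$. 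Hence $\mathrm{im}(\varphi_1)=\mathsf{H}_{2b+1}(\mathbb{Z}_p)$ and $m_1=1$. Running the identical computation on the second kernel $\pi_1(\Sigma_b-\{p_2\},\,p_1)$, generated by $\rho_{11},\ldots,\rho_{1b},\tau_{11},\ldots,\tau_{1b},A_{12}$ (see \eqref{eq:gens-kernel-new}), gives $\mathrm{im}(\varphi_2)=\mathsf{H}_{2b+1}(\mathbb{Z}_p)$ and $m_2=1$. This is exactly where the degeneracy of $\omega$ enters: the identifications $r_{1j}\mapsto r_{2j}$, $t_{1j}\mapsto t_{2j}$ forced by passing to $W=V/V_0$ make each of the two kernel subgroups already surject onto the full quotient Heisenberg group.

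With $m_1=m_2=1$ in place, the relation $b_i-1=m_i(b-1)$ of Theorem~\ref{thm:double-Kodaira-from-G} yields $b_1=b_2=b$, so both Kodaira fibrations land directly on $\Sigma_b$ with no {\'e}tale base change, and the fibre-genus formula \eqref{eq:expression-gFi} specializes to $2g-2=p^{2b+1}(2b-2+\mathfrak{p})$, which is \eqref{eq:fibre-genus degenerate}. The invariant formulae \eqref{eq:invariants-S-degenerate} then follow by substituting $|G|=p^{2b+1}$ and $\mathfrak{n}=\mathfrak{p}$ into Proposition~\ref{prop:invariant-S-G}, and the slope and signature expressions \eqref{eq:slope-signature-S-degenerate} are read off verbatim from \eqref{eq:slope-signature-S-G}.

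I expect no genuine obstacle once the index computation is carried out: the statement is essentially a specialization of the general framework. The only point requiring a little care is the uniform treatment of $p=2$, where the Heisenberg group $\mathsf{Heis}(W,\,\omega)$ is not defined and must be replaced by the matrix group $\mathsf{H}_{2b+1}(\mathbb{Z}_2)$ of order $2^{2b+1}$; I would note that the surjectivity argument for $\varphi_1$ and $\varphi_2$, and the values $n=p$, $|G|=p^{2b+1}$, $m_1=m_2=1$, are all insensitive to this replacement, so Theorem~\ref{thm:double-Kodaira-from-G} and Proposition~\ref{prop:invariant-S-G} apply without change.
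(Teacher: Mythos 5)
Your proposal is correct and follows essentially the same route as the paper's own proof: both reduce the statement to Theorem~\ref{thm:double-Kodaira-from-G} and Proposition~\ref{prop:invariant-S-G} with $n=p$, $|G|=p^{2b+1}$, and $m_1=m_2=1$, the latter established by observing that the restrictions $\varphi_1,\varphi_2$ to the two kernel subgroups hit the full generating set $\mathsf{r}_1,\ldots,\mathsf{r}_b,\mathsf{t}_1,\ldots,\mathsf{t}_b,\mathsf{z}$ because the degenerate form identifies the two families of generators. Your explicit listing of generators and the remark on the uniform treatment of $p=2$ merely spell out details the paper states more tersely.
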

\begin{proof}
By construction, both group homomorphisms
\begin{equation}
\varphi_1 \colon \pi_1(\Sigma_b - \{p_1\}, \, p_2) \to \mathsf{Heis}(W, \, \omega), \quad \varphi_2 \colon \pi_1(\Sigma_b - \{p_2\}, \, p_1) \to \mathsf{Heis}(W, \, \omega)
\end{equation}
are surjective (in the case $p$ odd, this reflects the fact that the quotient map $\mathsf{Heis}(V, \, \omega) \to \mathsf{Heis}(W, \, \omega)$ identifies $\mathsf{r}_{1j}$ with $\mathsf{r}_{2j}$ and $\mathsf{t}_{1j}$ with $\mathsf{t}_{2j}$). This implies that the fibres of both fibrations $\pi_1 \circ f \colon S^{\circ}_{b, \, p} \to \Sigma_b$ and $\pi_2 \circ f \colon S^{\circ}_{b, \, p} \to \Sigma_b$ are connected, hence $f \colon S^{\circ}_{b, \, p} \to \Sigma_b \times \Sigma_b$ is a double Kodaira fibration. The fibre genus $g$ and the invariants of $S^{\circ}_{b, \, p}$ can be now computed by using Theorem \ref{thm:double-Kodaira-from-G} and Proposition \ref{prop:invariant-S-G}, setting $n=p$, $m_1=m_2=1$ and $|G| = |\mathsf{Heis}(W, \, \omega)| = p^{2b+1}$.
\end{proof}

In this situation, the bounds for the slope and the signature are provided by

\begin{proposition} \label{prop:invariants-degenerate-case}
Let $f \colon S^{\circ}_{b, \, p} \to \Sigma_b \times \Sigma_b$ be a double Kodaira fibration given by a degenerate Heisenberg cover as in Theorem \emph{\ref{thm:directly-Kodaira}}.
\begin{itemize}
\item[$\boldsymbol{(1)}$] We have
\begin{equation} \label{eq:slope-2}
2 < \nu(S^{\circ}_{b, \, p} ) \leq 2 + \frac{1}{3},
\end{equation}
with equality on the right holding precisely for $(b, \, p)=(2, \, 3)$.
\item[$\boldsymbol{(2)}$] The signature $\sigma(S^{\circ}_{b, \, p})$ is divisible by $16$ and we have
\begin{equation}
\sigma(S^{\circ}_{b, \, p}) \geq \sigma(S^{\circ}_{3, \,2})=128.
\end{equation}
\end{itemize}
\end{proposition}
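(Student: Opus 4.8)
The plan is to read off both statements from the closed formulas for $\nu(S^{\circ}_{b,p})$ and $\sigma(S^{\circ}_{b,p})$ recorded in Theorem \ref{thm:directly-Kodaira}, keeping in mind throughout the standing arithmetic constraint that $p$ is a prime dividing $b+1$ with $b \geq 2$.

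For part $\boldsymbol{(1)}$, I would first substitute $\mathfrak{p} = 1 - 1/p$ and clear denominators to rewrite the slope as
\[
\nu(S^{\circ}_{b,p}) = 2 + \frac{p^2-1}{(2b-1)p^2 - p}.
\]
Since $p \geq 2$ and $b \geq 2$, the denominator factors as $p((2b-1)p - 1) > 0$ and the numerator satisfies $p^2 - 1 \geq 3 > 0$, so the fractional term is strictly positive and the bound $\nu > 2$ is immediate. For the upper bound, I would observe that $\nu \leq 2 + 1/3$ is equivalent, after multiplying through by the positive quantity $3((2b-1)p^2 - p)$, to the polynomial inequality $(4 - 2b)p^2 + p - 3 \leq 0$. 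The case analysis is then brief: when $b = 2$ the only prime dividing $b+1 = 3$ is $p = 3$, the left-hand side reduces to $p - 3 = 0$, and equality holds; when $b \geq 3$ one has $4 - 2b \leq -2$, whence $(4-2b)p^2 + p - 3 \leq -2p^2 + p - 3 < 0$ for every $p \geq 2$, giving strict inequality. This isolates equality precisely at $(b,p) = (2,3)$.

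For part $\boldsymbol{(2)}$, I would first record the simplified expression
\[
\sigma(S^{\circ}_{b,p}) = \tfrac{1}{3}(2b-2)\,p^{2b-1}(p^2-1),
\]
obtained by inserting $2\mathfrak{p} - \mathfrak{p}^2 = (p^2-1)/p^2$ into the signature formula. Divisibility by $16$ then splits into two cases. When $p$ is odd the branching order $n = p$ is odd, so Remark \ref{rmk:Rokhlin} applies directly: $S^{\circ}_{b,p}$ is spin and Rokhlin's theorem forces $16 \mid \sigma$. When $p = 2$ the condition $2 \mid b+1$ makes $b$ odd, the formula collapses to $\sigma(S^{\circ}_{b,2}) = (b-1)2^{2b}$, and this is visibly a multiple of $16$ because $2b \geq 6$. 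For the lower bound I would estimate $\sigma = \tfrac{2}{3}(b-1)(p^2-1)p^{2b-1}$ from below using only $p \geq 2$, namely $p^2 - 1 \geq 3$ and $p^{2b-1} \geq 2^{2b-1}$, to obtain $\sigma \geq (b-1)2^{2b}$. For $b \geq 3$ the right-hand side is increasing in $b$ and already equals $128$ at $b = 3$, so $\sigma \geq 128$ throughout this range; the configuration $(3,2)$ is admissible since $2 \mid 4$, and it realizes the value $128$. The residual case $b = 2$ forces $p = 3$ and gives $\sigma(S^{\circ}_{2,3}) = 144 > 128$, so the global minimum is $128$, attained exactly at $(b,p) = (3,2)$.

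The computations are elementary, and I expect the only genuine care to lie in the bookkeeping of the arithmetic constraint $p \mid b+1$ when extracting the extremal pairs — in particular, noting that for $b = 2$ the sole admissible prime is $p = 3$ — and in separating the even prime $p = 2$ from the odd case for the $16$-divisibility, since the Rokhlin argument is available only in the spin (odd $n$) situation and the even case must instead be settled by the explicit formula $\sigma(S^{\circ}_{b,2}) = (b-1)2^{2b}$.
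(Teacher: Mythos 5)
Your proof is correct, and for part $\boldsymbol{(1)}$ it takes a genuinely different, more elementary route than the paper. The paper exploits the constraint $p \mid b+1$ by writing $b = kp-1$, so that $\nu(S^{\circ}_{b,\,p}) = 2 + \frac{p^2-1}{2kp^3-3p^2-p}$; it then notes monotonicity in $k$ to reduce to $k=1$, studies the univariate function $u(x)=\frac{x^2-1}{2x^3-3x^2-x}$ via its derivative (strictly decreasing for $x \geq 3$), and treats $p=2$ by a separate computation (there the maximum is $2+1/6$, at $b=3$). You instead clear denominators and reduce the upper bound to the polynomial inequality $(4-2b)p^2+p-3 \leq 0$, which splits cleanly into the case $b=2$ (where $p=3$ is forced by $p \mid b+1$ and equality holds) and $b \geq 3$ (where the inequality is strict for every $p \geq 2$). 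This avoids both the parametrization $b=kp-1$ and any calculus, and it handles $p=2$ uniformly with the odd primes; what it gives up is the side information the paper's analysis produces (the exact maximizer for $p=2$, and the observation that $\nu(S^{\circ}_{p-1,\,p})$ tends to $2$). For part $\boldsymbol{(2)}$ the paper simply declares the claim immediate after rewriting $\sigma(S^{\circ}_{b,\,p}) = \frac{1}{3}(2b-2)p^{2b-1}(p^2-1)$; your fleshed-out version --- Rokhlin via Remark \ref{rmk:Rokhlin} for odd $p$ (legitimate, since the branching order $n=p$ is then odd), the explicit value $(b-1)2^{2b}$ for $p=2$, and the uniform estimate $\sigma \geq (b-1)2^{2b}$ with the residual case $(b,p)=(2,3)$ giving $144>128$ --- is correct, and mirrors the dual justification (direct formula plus Rokhlin) that the paper spells out in the non-degenerate case, Proposition \ref{prop:invariants-nondegenerate-case}.
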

\begin{proof}
The proof of $\boldsymbol{(2)}$ is immediate if we rewrite the second equation in \eqref{eq:slope-signature-S-degenerate} as
\begin{equation} \label{eq:rewrite-signature}
\sigma(S_{b, \, p}^{\circ}) = \frac{1}{3}(2b-2)p^{2b-1}(p^2-1),
\end{equation}
so we must only show $\boldsymbol{(1)}$. Assume first $p \geq 3$. Since by assumption $p$ divides $b+1$, we can write $b=kp-1$ for some positive integer $k\geq 1$ and, by substituting in \eqref{eq:slope-signature-S-degenerate}, we get
\begin{equation} \label{eq:nu-degenerate}
2 < \nu(S^{\circ}_{b, \, p}) = 2+\frac{p^2-1}{2kp^3-3p^2-p} \leq 2+\frac{p^2-1}{2p^3-3p^2-p},
\end{equation}
with equality on the right holding if and only if $k=1$. By computing its derivative, it is easily seen that the real function
\begin{equation}
 u(x)=\frac{x^2-1}{2x^3-3x^2-x}
\end{equation}
is strictly decreasing for $x \geq 3$; by \eqref{eq:nu-degenerate}, this implies that the maximum of $\nu(S^{\circ}_{b, \, p})$, for $p \geq 3$, is attained precisely when
$k=1$ and $p=3$, that corresponds to the pair $(b, \, p)=(2, \, 3)$ and yields for the slope the value  $2+1/3$. On the other hand, elementary calculations show that if $p=2$ the maximum is attained at $b=3$, that yields for the slope the value $2 + 1/6$. This completes the proof.  As an aside we note that, again by  \eqref{eq:nu-degenerate}, taking $p$ large enough we can make $\nu(S^{\circ}_{p-1, \, p})$ arbitrarily close to $2$.
\end{proof}

\begin{remark} \label{rmk:minimal-signature-decreases}
Degenerate Heisenberg covers  allow us to obtain a much smaller minimal signature than the construction with the symplectic form, namely $128$ instead of $2^4 \cdot 5^7$. This is due to the fact that, using the degenerate construction, the lowest admissible order of our Heisenberg group decreases from $5^9$ to $2^7$.

The price to pay is that now not all the pairs $(b, \, p)$ are suitable for the construction, but only those with $p$ dividing $b+1$, so we cannot get slope higher than $2+1/3$ in this way.
\end{remark}

\begin{remark} \label{rmk:minimum-base-genus}
The double Kodaira fibration $S^{\circ}_{3, \, 2} \to \Sigma_3 \times \Sigma_3$, besides minimizing the signature, also realizes the minimum fibre genus for the Kodaira fibrations constructed in this paper, namely $g = 289$.
\end{remark}

Let $\boldsymbol{\upomega} \colon \mathbb{N} \to \mathbb{N}$ be the arithmetic function counting the number of distinct prime factors of a positive integer, see \cite[p.335]{HarWr08}; therefore, as a consequence of Theorem \ref{thm:directly-Kodaira}, we obtain

\begin{corollary} \label{cor:number-kodaira-any-curve}
Let $\Sigma_b$ be any smooth curve of genus $b$. Then there exist at least one and at most finitely many  double Kodaira fibrations obtained as degenerate Heisenberg covers $f \colon S_{b, \, p}^{\circ} \to \Sigma_b \times \Sigma_b$. In fact, denoting by $\kappa(b)$ the number of such fibrations, we have
\begin{equation*}
\kappa(b) =\boldsymbol{\upomega}(b+1).
\end{equation*}
In particular,
\begin{equation*}
\limsup_{b \rightarrow + \infty} \kappa(b) = + \infty.
\end{equation*}
\end{corollary}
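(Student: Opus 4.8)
The plan is to read the count off directly from Theorem \ref{thm:directly-Kodaira} and then to separate the resulting fibrations by means of a single numerical invariant. First I would observe that, by definition, $\boldsymbol{\upomega}(b+1)$ is precisely the number of primes $p$ dividing $b+1$, and that Theorem \ref{thm:directly-Kodaira} produces exactly one degenerate Heisenberg cover $f \colon S^{\circ}_{b, \, p} \to \Sigma_b \times \Sigma_b$ for each such prime (using the canonical choice $\lambda_j = \mu_j = -1$). Since $b \geq 2$ forces $b+1 \geq 3$, there is at least one prime divisor, so $\kappa(b) \geq 1$; and since every integer has only finitely many prime divisors, $\kappa(b)$ is finite, with the crude bound $\kappa(b) \leq \boldsymbol{\upomega}(b+1)$.

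To upgrade this inequality to the equality $\kappa(b) = \boldsymbol{\upomega}(b+1)$, the key point is that distinct primes yield genuinely distinct, indeed non-homeomorphic, total spaces. Here I would invoke the signature formula from Proposition \ref{prop:invariants-degenerate-case}, which after simplifying $2\mathfrak{p} - \mathfrak{p}^2 = (p^2-1)/p^2$ reads
\begin{equation*}
\sigma(S^{\circ}_{b, \, p}) = \frac{1}{3}(2b-2)\, p^{2b-1}(p^2-1).
\end{equation*}
For fixed $b \geq 2$ the prefactor $\tfrac{1}{3}(2b-2)$ is a positive constant, and the function $p \mapsto p^{2b-1}(p^2-1) = p^{2b+1} - p^{2b-1}$ is strictly increasing for $p \geq 2$, since its derivative $p^{2b-2}\bigl((2b+1)p^2 - (2b-1)\bigr)$ is manifestly positive. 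Hence distinct primes $p_1 < p_2$ dividing $b+1$ give $\sigma(S^{\circ}_{b, \, p_1}) < \sigma(S^{\circ}_{b, \, p_2})$; as the signature is a homeomorphism invariant of the underlying real $4$-manifold, the corresponding surfaces are pairwise non-homeomorphic, and in particular the $\boldsymbol{\upomega}(b+1)$ fibrations are pairwise distinct. This monotonicity of $\sigma$ in $p$ is the crux of the argument, and I expect it to be the only genuine obstacle; everything else is bookkeeping.

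Finally, for the $\limsup$ assertion I would exploit the unboundedness of $\boldsymbol{\upomega}$: taking $b+1$ to be the product $p_1 p_2 \cdots p_k$ of the first $k$ primes (so that $b \geq 2$ as soon as $k \geq 2$) gives $\kappa(b) = \boldsymbol{\upomega}(b+1) = k$, and letting $k \to \infty$ yields $\limsup_{b \to \infty} \kappa(b) = +\infty$. I would flag that the signature, rather than the fibre genus or the invariants $c_1^2, c_2$, is the most economical invariant for the distinctness step, since its strict monotonicity in $p$ is immediate from the closed form above.
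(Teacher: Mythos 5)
Your proposal is correct and takes essentially the same route as the paper: both read the count off from Theorem \ref{thm:directly-Kodaira} and then distinguish the surfaces attached to distinct primes by observing that the signature $\sigma(S^{\circ}_{b,\,p}) = \tfrac{1}{3}(2b-2)\,p^{2b-1}(p^2-1)$ is strictly increasing in $p$ for fixed $b$, so the total spaces are pairwise non-homeomorphic. Your explicit derivative computation and the primorial choice of $b+1$ for the $\limsup$ are just details the paper leaves implicit.
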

\begin{proof}
In view of Theorem \ref{thm:directly-Kodaira}, we only have to show that two different primes  dividing $b+1$ give rise to different Kodaira fibrations. Actually, something much stronger is true: the corresponding Kodaira fibred surfaces  are non-homeomorphic, because they have different signature. In fact, looking at \eqref{eq:rewrite-signature}, we see that, for fixed $b$, the signature $\sigma(S_{b, \, p}^{\circ})$ is strictly increasing in $p$.
\end{proof}

In particular, if $b=2$ the only possibility is $p=3$. This case provides (to our knowledge) the first ``double solution'' to a problem, posed by G. Mess, from Kirby's problem list in low-dimensional topology (\cite[Problem 2.18 A]{Kir97}), asking what is the smallest number $b$ for which there exists a real surface bundle over a surface with base genus $b$ and non-zero signature.

\begin{proposition} \label{prop:Kirby}
There exists a $4$-manifold $X$ $($namely, the real $4$-manifold underlying the complex surface $S^{\circ}_{2, \, 3})$ of signature $144$ that can be realized as a real surface bundle over a surface of genus $2$, with fibre  genus $325$, in two different ways.
\end{proposition}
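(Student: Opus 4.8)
The plan is to specialize the general construction of Theorem \ref{thm:directly-Kodaira} to the pair $(b, \, p) = (2, \, 3)$ and then to check the three numerical assertions (base genus $2$, fibre genus $325$, signature $144$) together with the fact that the two resulting bundle structures are genuinely distinct.

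First I would verify that $(2, \, 3)$ is an admissible pair for the degenerate construction: since $p = 3$ divides $b + 1 = 3$, Theorem \ref{thm:directly-Kodaira} applies and produces a degenerate Heisenberg cover $f \colon S^{\circ}_{2, \, 3} \to \Sigma_2 \times \Sigma_2$ with Galois group $\mathsf{H}_5(\mathbb{Z}_3)$, together with two Kodaira fibrations $f_i \colon S^{\circ}_{2, \, 3} \to \Sigma_2$ obtained by composing $f$ with the two projections $\pi_i$. Here no Stein factorization is needed, so both fibrations have base genus exactly $b = 2$. Next I would substitute $b = 2$, $p = 3$, $\mathfrak{p} = 2/3$ into the formulae of Theorem \ref{thm:directly-Kodaira}: from \eqref{eq:fibre-genus degenerate} one gets $2g - 2 = 3^5\,(2 + 2/3) = 648$, hence $g = 325$, and from the closed form \eqref{eq:rewrite-signature} of Proposition \ref{prop:invariants-degenerate-case} one gets
\begin{equation*}
\sigma(S^{\circ}_{2, \, 3}) = \tfrac{1}{3}\,(2b-2)\,p^{2b-1}\,(p^2-1) = \tfrac{1}{3} \cdot 2 \cdot 3^{3} \cdot 8 = 144.
\end{equation*}

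Having fixed the numbers, I would let $X$ denote the underlying real $4$-manifold of $S^{\circ}_{2, \, 3}$. Each $f_i$ is a smooth, non-isotrivial holomorphic fibration, hence by Ehresmann's theorem \cite{Eh51} a locally trivial $C^{\infty}$ fibre bundle whose fibre is a closed orientable surface of genus $325$ and whose base is a closed orientable surface of genus $2$. This already exhibits $X$ as a real surface bundle over a surface of the required type, and since $\sigma(X) = 144 \neq 0$ it is a non-trivial one.

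The only point requiring genuine justification, and the one I would treat as the crux, is that $f_1$ and $f_2$ provide two \emph{different} bundle structures rather than the same structure described twice. This is exactly the content of the double Kodaira property established in Theorem \ref{thm:directly-Kodaira}: the product morphism $f = f_1 \times f_2 \colon X \to \Sigma_2 \times \Sigma_2$ is surjective, being a finite cover of $\Sigma_2 \times \Sigma_2$. Consequently a generic fibre of $f_1$ is mapped \emph{onto} $\Sigma_2$ by $f_2$, so it is not contained in any fibre of $f_2$; the two fibrations therefore have distinct fibres and define non-isomorphic bundle projections. This yields the asserted ``double'' realization and completes the argument.
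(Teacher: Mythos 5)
Your proposal is correct and follows essentially the same route as the paper: Proposition \ref{prop:Kirby} is obtained there precisely by specializing the degenerate Heisenberg cover construction of Theorem \ref{thm:directly-Kodaira} and Proposition \ref{prop:invariants-degenerate-case} to the unique admissible pair $(b,\,p)=(2,\,3)$, with the same numerics $2g-2 = 3^5(2+2/3)=648$ and $\sigma = \frac{1}{3}\cdot 2\cdot 3^3\cdot 8 = 144$. Your closing argument that the two projections give genuinely distinct bundle structures (each fibre of $f_1$ surjects onto $\Sigma_2$ under $f_2$, since $f=f_1\times f_2$ is finite and surjective) is exactly the distinctness built into the paper's notion of double Kodaira fibration.
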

This naturally leads to the following problem, that remains at the moment unsolved. 
\begin{question} \label{q:minimal-values}
What are the minimal possible fibre genus $f_{\mathrm{min}}$ and the minimum possible signature $\sigma_{\mathrm{min}}$ for a double Kodaira fibration $S \to \Sigma_2 \times \Sigma_2?$
\end{question}
Proposition \ref{prop:Kirby} implies $f_{\mathrm{min}} \leq 325$ and $\sigma_{\mathrm{min}} \leq 144$, but we do not know whether these inequalities are actually equalities.

\begin{remark} \label{rmk:BD02}
The first example of a real surface bundle over a surface of genus $2$ with non-zero signature was constructed in \cite{BD02}; it is a double Kodaira fibration with  $\sigma=16$ and 
\begin{equation}
(b_1, \, g_1)=(2, \, 25), \quad (b_2, \, g_2) = (9,\, 4).
\end{equation}
 \end{remark}
\medskip

Finally, let us say something about the deformations of the double Kodaira fibrations
$f \colon S^{\circ}_{b, \, p} \to \Sigma_b \times \Sigma_b$. Any such fibration is very simple and branched over the diagonal, which is the graph of the identity $\Sigma_b \to \Sigma_b$. If we denote by $\mathfrak{M}^{\circ}_{b, \, p}$ the connected component of the Gieseker moduli space of surfaces of general type containing (the class of) $S^{\circ}_{b, \, p}$, and by $\mathcal{M}_b$ the moduli space of smooth curves of genus $b$, applying \cite[Theorem 1.7]{Rol10} we obtain the following
\begin{proposition} \label{prop:moduli}
There is a natural map
\begin{equation}
\mathcal{M}_b \to \mathfrak{M}^{\circ}_{b, \, p}
\end{equation}
that is an isomorphism on closed points.
\end{proposition}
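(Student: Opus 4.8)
The plan is to realize the map as the one induced by the very simplicity of $f$, and then to reduce the two assertions (well-definedness and bijectivity on closed points) to \cite[Theorem 1.7]{Rol10}, after checking that its hypotheses are met. First I would construct the natural transformation. The degenerate Heisenberg cover $f \colon S^{\circ}_{b, \, p} \to \Sigma_b \times \Sigma_b$ is produced by the Riemann Extension Theorem (Proposition \ref{prop:Riemann-ext}) from the group epimorphism $\varphi \colon \mathsf{P}_2(\Sigma_b) \to \mathsf{H}_{2b+1}(\mathbb{Z}_p)$ of \eqref{eq:varphi-final}. Since the isomorphism \eqref{eq:iso-braids} identifies $\mathsf{P}_2(\Sigma_b)$ with $\pi_1(\Sigma_b \times \Sigma_b - \Delta)$, and $\varphi$ is specified purely through the presentation of Theorem \ref{thm:presentation-braid}, the assignment $\Sigma_b \mapsto S^{\circ}_{b, \, p}$ is locally constant in families: over a continuous family of genus-$b$ curves the group $\mathsf{P}_2$ and the class of $\varphi$ up to $\mathrm{Aut}(\mathsf{H}_{2b+1}(\mathbb{Z}_p))$ do not vary, so Proposition \ref{prop:Riemann-ext} yields a corresponding family of covers. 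This produces a well-defined morphism $\mathcal{M}_b \to \mathfrak{M}^{\circ}_{b, \, p}$.

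Next I would verify the hypotheses of Rollenske's theorem. By Theorem \ref{thm:directly-Kodaira}, $f$ is a double Kodaira fibration; moreover it is branched exactly over $\Delta \subset \Sigma_b \times \Sigma_b$, which is the graph of the identity automorphism of $\Sigma_b$. Hence $f$ is \emph{very simple} in the sense of Definition \ref{def:simple-very-symple-standard}, with $B_1 = B_2 = \Sigma_b$ and a single branch graph. This is precisely the input of \cite[Theorem 1.7]{Rol10}, which asserts that for a very simple double Kodaira fibration branched over the graph of an automorphism the deformations are unobstructed and remain of the same type: every small deformation is again a very simple cover of $\Sigma_{b'} \times \Sigma_{b'}$ for a deforming base curve $\Sigma_{b'}$, carrying the same Galois group and the same discrete covering datum. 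Consequently the only continuous parameter is the base curve, which is exactly what identifies the local structure of $\mathfrak{M}^{\circ}_{b, \, p}$ with that of $\mathcal{M}_b$.

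Finally I would deduce the bijection on closed points. Surjectivity follows because, by the previous step, every surface whose class lies in the connected component $\mathfrak{M}^{\circ}_{b, \, p}$ is a degenerate Heisenberg cover of $\Sigma_{b'} \times \Sigma_{b'}$ for some genus-$b$ curve $\Sigma_{b'}$, hence is the image of $[\Sigma_{b'}]$. Injectivity follows because the base curve is recovered intrinsically from $S^{\circ}_{b, \, p}$: the two projections $f_i \colon S^{\circ}_{b, \, p} \to \Sigma_b$ realize $\Sigma_b$ as the (minimal-genus, by the bound $b_1 \geq 2$, $g \geq 3$ of \cite{Kas68}) base of the Kodaira pencils, and an isomorphism of total spaces must carry one such fibration structure and its branch divisor to another, inducing an isomorphism of the common base curves. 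Thus isomorphic surfaces arise only from isomorphic curves.

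The main obstacle is exactly the rigidity of the covering datum: a priori a deformation of $S^{\circ}_{b, \, p}$ could destroy the fibration structure, alter the Galois group $\mathsf{H}_{2b+1}(\mathbb{Z}_p)$, or deform the branch automorphism away from the identity, thereby contributing extra moduli beyond those of $\Sigma_b$. This is the delicate point handled by \cite[Theorem 1.7]{Rol10} through the explicit description of the deformation space of a very simple double Kodaira fibration; the key observation is that the branch locus stays the graph of an automorphism, and automorphisms of a (generic) curve form a discrete set, so the covering datum is locally constant and the forgetful map to $\mathcal{M}_b$ introduces no additional parameters.
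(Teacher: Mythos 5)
Your proposal is correct and follows essentially the same route as the paper: both observe that $f \colon S^{\circ}_{b, \, p} \to \Sigma_b \times \Sigma_b$ is very simple and branched over the diagonal, which is the graph of the identity automorphism, and then invoke \cite[Theorem 1.7]{Rol10} to conclude. The additional details you supply (local constancy of the covering datum in families, and the surjectivity/injectivity arguments on closed points) simply make explicit what the paper leaves implicit in its direct citation of Rollenske's theorem.
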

Morally, since the branch divisor $\Delta$ of $f \colon S^{\circ}_{b, \, p} \to \Sigma_b \times \Sigma_b$ is rigid, all deformations of $S^{\circ}_{b, \, p}$ are realized by deformations of $\Sigma_b \times \Sigma_b$ preserving the diagonal, hence by deformations of $\Sigma_b$. In fact, it is well-known that the natural morphism of deformation functors
\begin{equation*}
\mathrm{Def}_{\Sigma_b} \times \mathrm{Def}_{\Sigma_b} \to \mathrm{Def}_{\Sigma_b \times \Sigma_b}
\end{equation*}
is an isomorphism, see \cite[p. 74]{Sern06}.

\section*{Acknowledgments}
Both authors warmly thank F. Catanese and G. Pirola for several stimulating conversations on the topic of this work, and the anonymous referee for  constructive recommendations and remarks. F. Polizzi was partially supported by GNSAGA-INdAM. He il grateful to A. Rapagnetta for his help on the proof of Proposition \ref{prop:integral-cohomology-conf}, to the Mathematics Stack Exchange user \emph{awllower} for his help on the proof of Proposition \ref{prop:heis-forma-exist}, to J. van Bon for helpful discussions on extra-special groups, and to D. Holt and W. Sawin for their precious comments and suggestions in the MathOverflow thread \\
\verb|https://mathoverflow.net/questions/298174|.

\end{document}